\documentclass[10pt]{article}

\usepackage[margin=1.2in]{geometry}
\usepackage[colorlinks,unicode]{hyperref}
\usepackage[all]{xy}
\usepackage[nottoc,numbib]{tocbibind}
\usepackage{amsmath,amsthm,amsfonts,longtable,verbatim,multicol,amssymb,wasysym,setspace,graphicx,titlesec,ulem,mathtools}
\urlstyle{same}
\usepackage[inline]{enumitem}

\onehalfspacing

\newcommand{\bb}{\medbreak}
\newcommand{\nt}{\noindent}

\newcommand{\Q}{\mathbb{Q}}

\newcommand{\N}{\mathbb{N}}
\newcommand{\Cc}{\mathbb{C}}
\newcommand{\Kk}{k}
\newcommand{\rt}{\xrightarrow{}}
\newcommand{\xrt}{\xrightarrow}

\newcommand{\id}{\text{id}}

\newcommand{\GL}{\text{GL}}

\newcommand{\End}{\text{End}}
\newcommand{\tr}{\text{Tr}}

\newcommand{\ind}{\mathrm{Ind}}
\newcommand{\res}{\mathrm{Res}}

\newcommand{\rank}{\text{rank}}

\newcommand{\Alg}{\mathop{\mathrm{Alg}}}

\newcommand{\Mod}{\text{-}\mathrm{Mod}}

\newcommand{\tensor}{\otimes}
\newcommand{\F}{\mathcal{F}}
\newcommand{\HH}{\mathcal{H}}
\newcommand{\define}[1]{\textbf{#1}}

\newtheorem{lemma}{Lemma}[section]
\newtheorem{theorem}[lemma]{Theorem}
\newtheorem{cor}[lemma]{Corollary}
\newtheorem{conj}[lemma]{Conjecture}

\newtheorem{proposition}[lemma]{Proposition}

\theoremstyle{definition}
\newtheorem{definition}[lemma]{Definition}

\newtheorem{remark}[lemma]{Remark}


\titleformat{\section}
  {\centering\normalfont\fontsize{10}{15}\bfseries}{\thesection}{1em}{}
 
\titleformat{\subsection}
  {\normalfont\fontsize{11}{15}\bfseries}{\thesubsection}{1em}{}
  
\titleformat{\subsection}[runin]{\normalfont\bfseries}{\thesubsection.}{1em}{}{}
\titlespacing{\subsection}{0pt}{5pt}{10pt}

\title{Twists of representations of complex reflection groups and rational Cherednik algebras}
\author{Y. Bazlov and E. Jones-Healey}
\date{}

\begin{document}

\maketitle

\begin{abstract}
Drinfeld twists, and the twists of Giaquinto and Zhang, allow for algebras and their modules to be deformed by a cocycle. We prove general results about cocycle twists of algebra factorisations and induced representations and apply them to reflection groups and rational Cherednik algebras. In particular, we describe how a twist acts on characters of Coxeter groups of type $B_n$ and $D_n$ and relate them to characters of mystic reflection groups. This is used to characterise twists of standard modules of rational Cherednik algebras as standard modules for certain braided Cherednik algebras. We introduce the coinvariant algebra of a mystic reflection group and use a twist to show that an analogue of Chevalley's theorem holds for these noncommutative algebras. We also discuss several cases where the negative braided Cherednik algebras are, and are not, isomorphic to rational Cherednik algebras. 
\end{abstract}

\tableofcontents

\section{Introduction}
The main theme of the present paper is the use of methods from quantum algebra to achieve results in representation theory. 
Placing an algebra $A$ and its representations in a category of 
modules over a quasitriangular Hopf algebra, or more generally 
a Hopf algebra equipped with a $2$-cocycle, leads to a deformation --- known as cocycle twist --- of the associative product on $A$, as well as of the representations of $A$. This can be used to produce new
representation-theoretic constructions and to uncover properties of known algebras and representations by realising them as
twists.\bb

\nt A family of examples of cocycle twists arises from our previous paper \cite{twistsrcas}, joint with Berenstein and McGaw.
In \cite{twistsrcas} we demonstrate an isomorphism $\phi$ between braided Cherednik algebras, constructed earlier, and cocycle twists 
of rational Cherednik algebras $H_c(G(m,p,n))$ of imprimitive 
complex reflection groups $G(m,p,n)$ with even $m$.
These algebras have an action of the elementary abelian $2$-group $T=T(2,1,n)$ whose 
group algebra $\Cc T$ carries a non-standard, cohomologically non-trivial quasitriangular structure $\mathcal F$
which is the cocycle we use. 
A strong property of a rational Cherednik algebra $H_c(G)$ is its PBW-type factorisation,
$S\otimes \Cc G \otimes S'$ into three subalgebras: polynomial algebras $S$, $S'$ and the group algebra of $G$. A consequence of this factorisation is that there is a class of representations of $H_c(G(m,p,n))$ called standard modules, which are induced from an irreducible representation of $\Cc G$. This motivates us to study how a cocycle twist affects algebra factorisation and induced representations. We do this in Section~\ref{sec:general_twists} below.\bb

\nt A result which follows from the theory developed in Section~\ref{sec:general_twists} which is not readily obvious is that 
if $\frac mp$ is even, then the algebra $H_c(G(m,p,n))$ is isomorphic to its twist by $\mathcal F$.
(Recall that, in the nomenclature of imprimitive complex reflection groups, $\frac mp$ must be an integer.)
This follows from a construction due to Kulish and Mudrov \cite{kulish_2011} which gives an isomorphism $\eta \colon A_{\mathcal F} \to A$ 
of $\Kk$-algebras whenever the twist $A_{\mathcal F}$ is via an \textit{adjoint} action of a $\Kk$-Hopf algebra on $A$.
The action of $T$ on $H_c(G(m,p,n))$ for even $\frac mp$ is indeed adjoint because it factors via the embedding $T=T(2,1,n)$ as a subgroup in $G(m,p,n)$.
Combining this new map $\eta$ with the isomorphism $\phi$
from \cite{twistsrcas}, 
we conclude that the braided Cherednik algebra of the mystic reflection group $\mu(G(m,p,n))$ is isomorphic to  $H_c(G(m,p,n))$.
This is an example where quantum algebra methods are used to establish a result in representation theory.\bb

\nt The isomorphism $\eta$ thus obtained between the rational and braided Cherednik algebras (part of Theorem~\ref{embedding_thm} below) is not fully compatible with the PBW factorisation, although it restricts to an isomorphism $\eta\colon \Cc G(m,p,n)_{\mathcal F}\to \Cc G(m,p,n)$. Hence, combined with the map $\phi$ 
constructed in \cite{twistsrcas}, which restricts to 
an isomorphism $\Cc \mu(G(m,p,n)) \xrightarrow{\phi} \Cc G(m,p,n)_{\mathcal F}$, we obtain an isomorphism
$$
\eta \phi \colon \Cc \mu(G(m,p,n)) \to \Cc G(m,p,n)
$$
between the group algebras of the mystic and the complex reflection group.\bb

\nt Despite having isomorphic group algebras over $\Cc$, the groups $\mu(G(m,p,n))$ and $G(m,p,n)$ may not be isomorphic. However, if $\frac mp$ is even, these two groups are the same subgroup of the group of $n\times n$ monomial matrices, hence $\eta \phi$ gives us an automorphism of the group algebra 
of $G(m,p,n)$. In particular, $\eta \phi$ 
induces a permutation of the set of irreducible characters of $G(m,p,n)$ for even~$\frac mp$.
We study this permutation in Section~\ref{twisting_characters_sec} in the case $m=2$, $p=1$, that is, for the Coxeter group of type $B_n$. The main result is Proposition~\ref{permutation_of_characters_b_prop} which says that the quantum automorphism $\eta\phi$ of $\Cc B_n$ induces the permutation
$$
\chi_{(\lambda,\mu)} \circ \eta\phi = 
\chi_{(\lambda,\mu^*)}
$$
where the irreducible characters are labelled by bipartitions $(\lambda,\mu)$ of $n$, and $\mu^*$ stands for the partition dual to $\mu$. \bb

\nt After formally presenting the result of the isomorphism 
between $H_c(G(m,p,n))$ and the corresponding negative braided Cherednik algebra as part of Theorem~\ref{embedding_thm}, we introduce a noncommutative analogue of the finite-dimensional coinvariant algebra
$S_G$ of a complex reflection group $G=G(m,p,n)$.
We show that the noncommutative coinvariant algebra $\underline S_W$ of the mystic reflection group $W=\mu(G(m,p,n))$ carries 
a regular representation of $W$, in an analogy to the classical Chevalley theorem for $S_G$. This is another example of a purely representation-theoretic result 
which is proved using quantum techniques: 
an important ingredient is a proof that the above isomorphism $\phi$ 
intertwines the $W$-action on $\underline S_W$ with the Giaquinto-Zhang twist of the $G$-action on $S_G$.\bb

\nt There is further evidence to support the view that 
these new algebras $\underline S_W$ should be treated as a ``quantum'' analogue of $S_G$. Namely, the noncommutative 
multiplication on $\underline S_W$
is the cocycle twist of the product on $S_G$, see Corollary~\ref{cor:twist}.
Recall that the coinvariant algebra $S_G$ appears as a factor in the triangular decomposition
of the finite-dimensional restricted rational Cherednik algebra 
$\overline H_c(G) \cong S_G \otimes \Cc G \otimes S_G'$.
It turns out that the twist is fully compatible 
with the quotient map from $H_{0,c}(G)$
onto $\overline H_c(G) $, which leads to the 
twisted version $\overline{\underline{H}}_c(W)$
of $\overline H_c(G) $, constructed in Section~\ref{sec:last}.\bb

\nt It is the construction of $\overline{\underline{H}}_c(W)$
which provides us with an example where 
a Cherednik-type algebra is not isomorphic to its cocycle twist. The example given in Proposition~\ref{prop:not_isom}
is for algebras of rank $2$ over the field $\Q$; in fact, 
the two $64$-dimensional algebras are forms of each other, i.e., they become isomorphic if the scalars are extended to $\Cc$. Since the twist is defined over $\Q$, this raises a question about a possible interplay between twisting 
and arithmetic phenomena. This will be explored in further work.




\section{Twists of algebras and representations}
\label{sec:general_twists}

\subsection{\texorpdfstring{$H$}{H}-module algebras and smash products.}
In this section we recall the notion of cocycle twist. We work in the general setting of Hopf algebras, using the notation of Majid \cite{majid_1995}. Let $H$ be a Hopf algebra over a field $k$ with coproduct $\triangle:H\rt H\otimes H$, counit $\epsilon:H\rt k$ and antipode $S: H\to H$. In this section assume all structures are linear over $k$, with tensor products taken over $k$ too. An important example of a Hopf algebra is the group algebra $k T$ for a finite group $T$, where $\triangle(t)=t\otimes t$, $\epsilon(t)=1$ and $S(t)=t^{-1}$
for all $t\in T$, with these maps extending linearly to $kT$.\bb 

\nt Actions of algebras and Hopf algebras will generally be denoted by $\rhd$, 
adorned if necessary. 
If $A$ is both an algebra, with product $m:A\otimes A\rt A$, and an $H$-module
where $H$ acts by $\rhd$, then $A$ is additionally an $H$-module algebra if $m$ is an $H$-module homomorphism and $h\rhd 1_A=\epsilon(h)1_A\ \forall h\in H$. Equivalently, $H$-module algebras can be seen as algebra objects in the category of $H$-modules.\bb

\nt In what follows, the smash product of an $H$-module algebra $A$ with $H$ will be denoted by $A\#H$. Recall this is the algebra with underlying vector space $A\otimes H$ containing $A\cong A\otimes 1_H$ and $H\cong 1_A\otimes H$ as subalgebras, and with cross-commutation relation:
\begin{equation}\label{eq:smash_relation}
	ha=(h_{(1)}\rhd a)\#h_{(2)}\qquad\forall h\in H,a\in A.
\end{equation} 
We use Sweedler notation $\triangle(h)=h_{(1)}\otimes h_{(2)}$ for the coproduct, where summation is understood here.

\subsection{The cocycle twist of an algebra.}

The cocycle twist involves deforming the algebra structure of an $H$-module algebra via a $2$-cocycle on $H$:
\begin{definition}[\cite{majid_1995}, Example 2.3.1 and Theorem 2.3.4]
\label{def:2-cocycle}
A \define{$2$-cocycle} on a Hopf algebra $H$ is an invertible $\F\in H\otimes H$ such that $(\F\otimes 1)\cdot (\triangle \otimes \id)(\F)=(1\otimes \F)\cdot (\id \otimes \triangle)(\F)$ and $(\epsilon \otimes \id)(\F)=1=(\id\otimes \epsilon)(\F)$. The Hopf algebra $H^\F$ is defined as having the same algebra structure, and counit, as $H$, but with coproduct and antipode:
$$\triangle_\F h = \F (\triangle h)\F^{-1},\ S_\F (h)=U\cdot S(h)\cdot U^{-1},\ \forall h\in H$$
where $U:=\F_1\cdot S(\F_2)$ and $\F=\F_1\otimes \F_2$ (summation suppressed).
\end{definition}

\nt We can now recall the notion of \define{cocycle twist} (also called Drinfeld twist) via the following result. Below let $H\Mod$ denote the category of left $H$-modules. 
\begin{proposition}[\cite{twistsrcas}, Proposition 4.3]\label{prop:twist_is_functorial}
A $2$-cocycle $\F\in H\otimes H$ for a Hopf algebra $H$ gives rise to the functor 
$$(\ )_\F: \Alg(H\Mod) \to \Alg(H^\F\Mod)$$
which takes an object $(A,m)$ to $(A,m_\F = m(\F^{-1}\rhd - ))$, and an arrow $(A,m)\xrightarrow{\phi}(B,m')$ to $(A,m_\F)\xrightarrow{\phi_\F}(B,m'_\F)$,
where $\phi_\F=\phi$ as $H$-module morphisms.
\end{proposition}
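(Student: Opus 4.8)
\noindent The plan is to present $(\ )_\F$ as the composite of two standard constructions. Since $H^\F = H$ as an associative algebra, the categories $H\Mod$ and $H^\F\Mod$ coincide; what differs is the monoidal structure, which on $H^\F\Mod$ is built from the twisted coproduct $\triangle_\F h = \F(\triangle h)\F^{-1}$ of Definition~\ref{def:2-cocycle} (write $\otimes_\F$ for the corresponding tensor product of modules). First, I would show that the identity functor, equipped with the natural transformation $J_{V,W} := (\F^{-1}\rhd -)\colon V\otimes W\to V\otimes W$ --- where the source carries the $\triangle_\F$-module structure and the target the $\triangle$-module structure --- is a monoidal functor $(H\Mod,\otimes)\to(H^\F\Mod,\otimes_\F)$. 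Second, I would invoke the general fact that a (lax) monoidal functor carries algebra objects to algebra objects and algebra morphisms to algebra morphisms, functorially. Feeding $(\id, J)$ into this machine yields exactly $(\ )_\F$: the product it puts on the algebra object $A$ is $m\circ J_{A,A} = m\circ(\F^{-1}\rhd -) = m_\F$, it keeps the unit $1_A$, and since the identity functor does nothing to underlying morphisms we get $\phi_\F = \phi$, whence functoriality in $\phi$ is automatic.

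\medskip\noindent To carry out the first step, I would check that each $J_{V,W}$ is $H^\F$-linear by the operator computation $(\F^{-1}\rhd -)\circ(\triangle_\F(h)\rhd -) = \bigl(\F^{-1}\F\,\triangle(h)\,\F^{-1}\bigr)\rhd - = (\triangle(h)\,\F^{-1})\rhd - = (\triangle(h)\rhd -)\circ(\F^{-1}\rhd -)$, with naturality in $V,W$ immediate as $J$ acts by a fixed element of $H\otimes H$. The associativity coherence axiom for $J$, evaluated on $V\otimes W\otimes Z$, reduces to the operator identity $(\triangle\otimes\id)(\F^{-1})\cdot(\F^{-1}\otimes 1) = (\id\otimes\triangle)(\F^{-1})\cdot(1\otimes\F^{-1})$, which is precisely the inverse of the $2$-cocycle condition: invert both sides of $(\F\otimes 1)(\triangle\otimes\id)(\F) = (1\otimes\F)(\id\otimes\triangle)(\F)$, using that $\triangle\otimes\id$ and $\id\otimes\triangle$ are algebra maps and $(XY)^{-1}=Y^{-1}X^{-1}$. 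The unit coherence axioms reduce to $(\epsilon\otimes\id)(\F^{-1}) = 1 = (\id\otimes\epsilon)(\F^{-1})$, which follow from the counit normalization of $\F$ by applying the algebra maps $\epsilon\otimes\id$ and $\id\otimes\epsilon$. I would also record that the module-algebra unit axiom $h\rhd 1_A = \epsilon(h)1_A$ is unchanged (the $H$-action on $A$ and the counit of $H^\F$ being the same as those of $H$), and that $\phi_\F(1_A) = \phi(1_A) = 1_B$.

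\medskip\noindent For readers who prefer to avoid monoidal-functor language, I would be prepared to give the direct verification. Associativity of $m_\F$ follows by expanding $m_\F(m_\F(a\otimes b)\otimes c)$ and $m_\F(a\otimes m_\F(b\otimes c))$: using $H$-linearity of $m$ one gathers the two copies of $\F^{-1}$ into $(\triangle\otimes\id)(\F^{-1})(\F^{-1}\otimes 1)$, respectively $(\id\otimes\triangle)(\F^{-1})(1\otimes\F^{-1})$, acting on $a\otimes b\otimes c$, and then associativity of $m$ together with the inverse $2$-cocycle identity equates the two. $H^\F$-linearity of $m_\F$ is the one-line computation $m_\F(\triangle_\F(h)\rhd(a\otimes b)) = m\bigl((\F^{-1}\triangle_\F(h))\rhd(a\otimes b)\bigr) = m\bigl((\triangle(h)\F^{-1})\rhd(a\otimes b)\bigr) = h\rhd m(\F^{-1}\rhd(a\otimes b)) = h\rhd m_\F(a\otimes b)$. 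The morphism claim is $\phi(m_\F(a\otimes b)) = m'\bigl((\phi\otimes\phi)(\F^{-1}\rhd(a\otimes b))\bigr) = m'\bigl(\F^{-1}\rhd(\phi(a)\otimes\phi(b))\bigr) = m'_\F(\phi(a)\otimes\phi(b))$, using that $\phi$ is an algebra map and $H$-linear, and the remaining functoriality assertions are trivial because $\phi_\F = \phi$ on underlying maps.

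\medskip\noindent I do not expect a genuine obstacle here; the only delicate point is bookkeeping --- deriving the inverse $2$-cocycle identity with the factors in the correct order after inverting, and, in the by-hand associativity computation, not confusing $\triangle\otimes\id$ with $\id\otimes\triangle$ when deciding which tensor leg each copy of $\F^{-1}$ acts on. Everything else is formal.
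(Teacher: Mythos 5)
The paper does not prove this proposition; it is quoted from \cite{twistsrcas} (Proposition 4.3) and used as a black box, so there is no in-paper argument to compare against. Your proof is correct and is the standard one: the monoidal-functor packaging (identity functor with coherence $\F^{-1}\rhd-$, whose hexagon axiom is the inverted cocycle identity and whose unit axioms are counitality of $\F^{-1}$) and the direct verification you supply as a fallback are both sound, and the key computations --- gathering the two copies of $\F^{-1}$ into $(\triangle\otimes\id)(\F^{-1})(\F^{-1}\otimes 1)$ versus $(\id\otimes\triangle)(\F^{-1})(1\otimes\F^{-1})$ for associativity, the one-line $H^\F$-linearity check, and the morphism check using $H$-linearity of $\phi$ --- are all correct as written.
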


\subsection{Cocycle twists of representations.}\label{cocycle_twist_of_reps_sec}

Giaquinto and Zhang \cite{giaquinto1998bialgebra} show that, under the restriction of working within the category of $H$-modules, it is possible to twist a representation of an algebra into a representation of the cocycle twist of that algebra. We outline this procedure next. 
\begin{definition}[\cite{giaquinto1998bialgebra}, Definition 1.6]
If $A$ is an $H$-module algebra, 
define $(H,A)\Mod$ to be 
%
%
%
the category whose objects $V$ are both $H$-modules and $A$-modules where the action $\rhd_A\colon A\otimes V \to V$ is a morphism of $H$-modules. That is, the compatibility condition 
\begin{equation}\label{eq:compatibility}
h\rhd (a\rhd_A v)=(h_{(1)}\rhd a)\rhd_A (h_{(2)}\rhd v),
\qquad\forall h\in H,a\in A,v\in V
\end{equation}
is satisfied. Morphisms in $(H,A)\Mod$ are
simultaneously $A$-module and $H$-module maps.  
\end{definition}

\nt The following notion of twist, from Giaquinto and Zhang \cite{giaquinto1998bialgebra}, takes objects from $(H,A)\Mod$ to $(H^\F,A_\F)\Mod$, where $A_\F$ is the cocycle twist of $A$, i.e.\ the image of $A$ under the functor given in Proposition \ref{prop:twist_is_functorial}. Suppose $V\in (H,A)\Mod$, and define $V_\F=V$ as a vector space. Note that $A\otimes V$ is naturally an $H\otimes H$-module via the actions of $H$ on $A$ and $V$ respectively, and therefore $\F^{-1}\rhd$ defines an endomorphism of $A\otimes V$. The map 
\begin{equation}\label{eq:def_twist}
\rhd_\F:A_\F\otimes V_\F\rt V_\F, \qquad \rhd_\F:=\rhd_A\circ (\F^{-1}\rhd )
\end{equation}
defines an $A_\F$-module structure on $V_\F$. This is easily checked to be a well-defined action on applying the conditions of a $2$-cocycle in Definition \ref{def:2-cocycle}, and using \eqref{eq:compatibility}. \bb

\nt We emphasise that \eqref{eq:def_twist} only applies to 
$(H,A)$-modules and not to arbitrary $A$-modules. 
The next Proposition implies that in general not all $A$-modules can be 
given an $(H,A)$-module structure.
A simple description of the $(H,A)$-module category is as follows:

\begin{proposition}[\cite{etingof2015tensor}, Exercise 7.8.32]
	\label{prop:category_equivalence_smash}
The category of $(H,A)$-modules is equivalent to $A\#H\Mod$.
\end{proposition}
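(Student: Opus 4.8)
The plan is to exhibit mutually inverse functors between $(H,A)\Mod$ and $A\#H\Mod$ that are the identity on underlying vector spaces and on morphisms, so that the equivalence is in fact an isomorphism of categories. First I would take an object $V \in A\#H\Mod$. Since $A \cong A \otimes 1_H$ and $H \cong 1_A \otimes H$ sit inside $A\#H$ as subalgebras, restriction along these inclusions makes $V$ simultaneously an $A$-module and an $H$-module. The content is that the smash relation \eqref{eq:smash_relation}, namely $ha = (h_{(1)}\rhd a)\#h_{(2)}$, when applied to $v \in V$, reads $h\cdot(a\cdot v) = (h_{(1)}\rhd a)\cdot(h_{(2)}\cdot v)$, which is exactly the compatibility condition \eqref{eq:compatibility}. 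Hence $V$ becomes an object of $(H,A)\Mod$. Conversely, given $V \in (H,A)\Mod$, I would define an action of $A\#H$ on $V$ on the spanning elements $a\#h$ by $(a\#h)\cdot v := a\rhd_A(h\rhd v)$, extended linearly.

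The main steps are then: (1) check that the formula $(a\#h)\cdot v = a\rhd_A(h\rhd v)$ gives a well-defined, associative, unital $A\#H$-action; (2) verify the two constructions are mutually inverse; (3) observe that a linear map $f\colon V \to W$ is an $A\#H$-module map if and only if it is simultaneously an $A$-module and $H$-module map, which matches the definition of morphisms in $(H,A)\Mod$ verbatim, so functoriality in both directions is immediate. For step (1), unitality is clear since $1_{A\#H} = 1_A\#1_H$. For associativity one must show $((a\#h)(b\#g))\cdot v = (a\#h)\cdot((b\#g)\cdot v)$; expanding the product in $A\#H$ via \eqref{eq:smash_relation} gives $(a\#h)(b\#g) = a(h_{(1)}\rhd b)\# h_{(2)}g$, so the left side is $\bigl(a(h_{(1)}\rhd b)\bigr)\rhd_A\bigl((h_{(2)}g)\rhd v\bigr)$, while the right side is $a\rhd_A\bigl(h\rhd(b\rhd_A(g\rhd v))\bigr)$; applying \eqref{eq:compatibility} to $h\rhd(b\rhd_A w)$ with $w = g\rhd v$ and then using that $\rhd$ and $\rhd_A$ are themselves associative module actions closes the gap.

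The one genuinely delicate point — the step I expect to be the main obstacle in writing this cleanly — is confirming that the passage $V \mapsto V$ lands in the correct category in each direction without circularity, i.e.\ that the smash relation is not merely \emph{implied by} but \emph{equivalent to} the compatibility axiom on a module. Concretely, when we restrict an $A\#H$-module to $A$ and $H$, we only know that \eqref{eq:smash_relation} holds as an identity of operators; we must be sure that this recovers \eqref{eq:compatibility} in full, including the Sweedler summation, and that conversely an arbitrary $(H,A)$-module really does satisfy all the relations of $A\#H$ (and not some quotient). This is handled by noting that $A\#H$ is generated as an algebra by the images of $A$ and $H$ subject to \emph{exactly} the relations of $A$, the relations of $H$, and the cross-relation \eqref{eq:smash_relation} — a standard presentation of the smash product — so specifying compatible $A$- and $H$-actions satisfying \eqref{eq:compatibility} is the same data as an $A\#H$-module. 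Since every step is a direct translation through this presentation, the resulting correspondence is a strict isomorphism of categories, which in particular gives the claimed equivalence.
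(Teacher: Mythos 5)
Your proposal is correct and follows essentially the same route as the paper's (sketched) proof: the formula $(a\#h)\rhd v = a\rhd_A(h\rhd v)$ in one direction, restriction to the subalgebras $A\otimes 1_H$ and $1_A\otimes H$ in the other, with the observation that the compatibility condition \eqref{eq:compatibility} is precisely the module-level incarnation of the cross-relation \eqref{eq:smash_relation}. Your version simply fills in the associativity check and the correspondence of morphisms that the paper leaves implicit.
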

\begin{proof}[Sketch of proof] 
On an $(H,A)$-module $V$, 
the formula  $(a\# h)\rhd v=a\rhd_A(h\rhd v)$
gives a well-defined $A\# H$-action: 
\eqref{eq:compatibility} is the same as compatibility of this action with the cross-commutator relation \eqref{eq:smash_relation} of the smash product.	
\end{proof}

\nt By a result of Kulish and Mudrov, the smash product 
is stable under the twist:

\begin{proposition}[\cite{kulish_2011}, Proposition 2.5]
	\label{prop:isom_semidirect_products}
Write $\F^{-1}\in H\otimes H$ as $\F^{-1}=f'\otimes f''$ (summation understood). The map 
\begin{equation}\label{eq:isom_semidirect_products}
	A_\F \# H^\F \xrightarrow{\sim} A \# H, \qquad a\# h \mapsto (f'\rhd a) \# f''h,
\end{equation}
is an isomorphism of algebras.	
\end{proposition}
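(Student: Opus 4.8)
The plan is to verify directly that the map in \eqref{eq:isom_semidirect_products} --- call it $\Phi\colon A_\F\#H^\F\to A\#H$, $\Phi(a\#h)=(f'\rhd a)\#f''h$ --- is a bijective algebra homomorphism. Bijectivity is immediate: writing $\F=\F_1\otimes\F_2$, the assignment $a\#h\mapsto(\F_1\rhd a)\#\F_2h$ is a two-sided inverse, because $\F\F^{-1}=\F^{-1}\F=1\otimes1$ in $H\otimes H$ and the $H$-module axiom $(xy)\rhd a=x\rhd(y\rhd a)$ then turns both composites into the identity. So the whole content of the statement is that $\Phi$ respects multiplication.

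To set this up I would first write out both products. In $A\#H$, relation \eqref{eq:smash_relation} gives $(a\#h)(b\#g)=a\,(h_{(1)}\rhd b)\#h_{(2)}g$. Since the functor of Proposition~\ref{prop:twist_is_functorial} makes $A_\F$ into an $H^\F$-module algebra with the same underlying action as $A$, the same formula computes the product in $A_\F\#H^\F$, but now with the twisted product $m_\F(x\otimes y)=(f'\rhd x)(f''\rhd y)$ in the $A$-slot and the twisted coproduct $\triangle_\F h=\F(\triangle h)\F^{-1}$ of Definition~\ref{def:2-cocycle} in the $H$-slot. Applying $\Phi$ to the resulting expression and using that $A$ is an $H$-module algebra --- so the outer action distributes over products in $A$ via $x\rhd(uv)=(x_{(1)}\rhd u)(x_{(2)}\rhd v)$ --- reduces $\Phi\big((a\#h)\cdot_\F(b\#g)\big)$ to an expression in which $a$ is acted on by one string of Hopf elements, $b$ by a second, and the $H$-component is a third, each string assembled from (co)products of the legs of a few independent copies of $\F^{\pm1}$ together with $\triangle h$. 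The right-hand side $\Phi(a\#h)\cdot\Phi(b\#g)$, expanded the same way in $A\#H$, has exactly the same shape.

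The crux is matching the two. After normal-ordering both sides --- pushing all coproducts outward and grouping the legs of $\F$ and $\F^{-1}$ --- the discrepancy collapses to a single application of the $2$-cocycle identity $(\F\otimes1)(\triangle\otimes\id)(\F)=(1\otimes\F)(\id\otimes\triangle)(\F)$, used in the inverted form $(\triangle\otimes\id)(\F^{-1})(\F^{-1}\otimes1)=(\id\otimes\triangle)(\F^{-1})(1\otimes\F^{-1})$, with the factor $\triangle h$ riding harmlessly in the middle (it appears sandwiched between the same legs on both sides). I expect this bookkeeping to be the only real obstacle: nothing is deep, but keeping the Sweedler indices of the several copies of $\F$, $\F^{-1}$ and $\triangle h$ straight, and pinpointing which instance of the cocycle equation to invoke, requires care; it helps to check the degenerate case $h=g=1$ first, where only the pure cocycle identity is needed.

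There is also a more structural route, which I would mention but not pursue. Twisting by $\F$ is invertible --- twist back by $\F^{-1}$, which is a $2$-cocycle on $H^\F$ with $(H^\F)^{\F^{-1}}=H$ and $(A_\F)_{\F^{-1}}=A$ --- so composing the Giaquinto--Zhang twist functor with the two equivalences of Proposition~\ref{prop:category_equivalence_smash} yields an equivalence $A\#H\Mod\xrightarrow{\sim}A_\F\#H^\F\Mod$ that is the identity on underlying vector spaces and on morphisms. Any such functor is restriction of scalars along a unique algebra homomorphism $A_\F\#H^\F\to A\#H$ (evaluate the functor on the regular module and use that right multiplications are module maps to recover the homomorphism), necessarily an isomorphism since the functor is an equivalence; unwinding the definitions identifies it with \eqref{eq:isom_semidirect_products}.
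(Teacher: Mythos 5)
Your proposal is correct: the paper itself gives no proof of this proposition (it is imported verbatim from Kulish--Mudrov), and your direct verification is exactly the standard argument used there --- expand both smash products, distribute the outer copy of $\F^{-1}$ over $m_\F$ via the module-algebra axiom, and collapse the discrepancy with the inverted cocycle identity $(\triangle\otimes\id)(\F^{-1})(\F^{-1}\otimes 1)=(\id\otimes\triangle)(\F^{-1})(1\otimes\F^{-1})$, the inner $\F$ from $\triangle_\F h=\F(\triangle h)\F^{-1}$ then cancelling against the $(1\otimes\F^{-1})$ leg. The only caveat is that you describe rather than execute the bookkeeping, but the key identity and the inverse map are correctly identified, so the plan goes through as stated.
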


\nt This result provides an alternative means of showing the map $\rhd_\F$ defined by Giaquinto and Zhang in \eqref{eq:def_twist} is a well-defined action of $A_\F$ on~$V$: $A_\F=A_\F\# 1 \subseteq A_\F\# H^\F$ embeds as a subalgebra in $A\# H$ via the morphism
\eqref{eq:isom_semidirect_products}, and the map $\rhd_\F$ can be seen to coincide with the restriction of the $A\# H$-action on $V$, arising from Proposition~\ref{prop:category_equivalence_smash}, onto the image of $A_\F$.\bb

\nt Proposition \ref{prop:isom_semidirect_products} also directly explains the following equivalence of categories proved by Giaquinto and Zhang; they are equivalent to module categories over isomorphic algebras $A_\F \# H^\F$ and $A \# H$:
\begin{cor}[\cite{giaquinto1998bialgebra}, Theorem 1.7]
The categories $(H,A)\Mod$ and $(H^\F, A_\F)\Mod$ are
equivalent.
\end{cor}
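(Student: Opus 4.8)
The plan is to deduce the statement from the algebra isomorphism of Proposition~\ref{prop:isom_semidirect_products} together with Proposition~\ref{prop:category_equivalence_smash}, using the fact that the twisting construction is symmetric. First I would record that $H^\F$ is again a Hopf algebra (Definition~\ref{def:2-cocycle}), that $\F^{-1}$ is a $2$-cocycle on $H^\F$ with $(H^\F)^{\F^{-1}}=H$, and that $A_\F$ is an $H^\F$-module algebra with $(A_\F)_{\F^{-1}}=A$ — the last point being exactly the content of Proposition~\ref{prop:twist_is_functorial} applied to the identity arrow. Granting this, Proposition~\ref{prop:category_equivalence_smash} applies verbatim on both sides and yields equivalences
$$
(H,A)\Mod \;\simeq\; (A\#H)\Mod, \qquad (H^\F,A_\F)\Mod \;\simeq\; (A_\F\#H^\F)\Mod .
$$

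Next I would invoke Proposition~\ref{prop:isom_semidirect_products}, which supplies an algebra isomorphism $A_\F\#H^\F \xrightarrow{\sim} A\#H$. Restriction of scalars along this isomorphism is an (iso)equivalence $(A\#H)\Mod \simeq (A_\F\#H^\F)\Mod$, and composing the three equivalences above gives the desired equivalence $(H,A)\Mod \simeq (H^\F,A_\F)\Mod$.

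Although the bare statement only asserts existence of an equivalence, I would additionally track its effect on objects in order to identify it with the Giaquinto--Zhang twist $V\mapsto V_\F$ of \eqref{eq:def_twist}. Unwinding the equivalence of Proposition~\ref{prop:category_equivalence_smash} (whose $A\#H$-action is $(a\#h)\rhd v = a\rhd_A(h\rhd v)$) and the formula $a\#h\mapsto (f'\rhd a)\# f''h$ of \eqref{eq:isom_semidirect_products}, with $\F^{-1}=f'\otimes f''$, the $A_\F$-action one obtains on the underlying space of $V$ is $a\otimes v \mapsto (f'\rhd a)\rhd_A(f''\rhd v) = \rhd_A\circ(\F^{-1}\rhd)$, i.e.\ precisely $\rhd_\F$, while the $H^\F$-action is unchanged. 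Thus the composite equivalence is exactly $(\ )_\F$, recovering Giaquinto--Zhang's functor.

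The only genuine obstacle is the bookkeeping in the first paragraph: confirming that the hypotheses of Proposition~\ref{prop:category_equivalence_smash} hold on the twisted side, i.e.\ that $\F^{-1}$ really satisfies the $2$-cocycle identities relative to $\triangle_\F$ and that $A_\F$ is an honest $H^\F$-module algebra. These are standard properties of Drinfeld twists; once they are in place the argument is purely formal, since an equivalence of module categories is immediate from an isomorphism of the underlying algebras.
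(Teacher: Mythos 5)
Your argument is correct and is exactly the route the paper takes: it justifies the corollary by noting that, via Proposition~\ref{prop:category_equivalence_smash}, both categories are module categories over the algebras $A\#H$ and $A_\F\#H^\F$, which are isomorphic by Proposition~\ref{prop:isom_semidirect_products}. Your additional unwinding of the composite equivalence to recover the Giaquinto--Zhang formula $\rhd_\F$ is consistent with the paper's surrounding discussion and is a worthwhile check, but not something the paper spells out in the proof itself.
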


\subsection{Twists via adjoint actions.}
One can say much more about the twist and the semidirect product when $A$ has the following special $H$-module structure:
\begin{definition}[\cite{montgomery_1993}, Example 7.3.3]\label{adjoint_action}
An action $\rhd$ of $H$ on $A$ is called \define{adjoint} (or \define{strongly inner}) if, for some algebra homomorphism $u\colon H\to A$, $\rhd$ is given by: 
\begin{equation*}
h\rhd a=u(h_{(1)}) a u(Sh_{(2)}).
\end{equation*}
\end{definition}
\begin{proposition}\label{twist_isom_prop}
Assume that $A$ is an $H$-module algebra with an adjoint action of $H$ via the homomorphism $u\colon H \to A$, and retain the above notation. Then,
\begin{itemize}
	\item[1.] The smash product $A\# H$ and the tensor product $A\otimes H$ (where the two factors commute) are isomorphic as algebras, via the map 
	$a\# h \mapsto au(h_{(1)}) \otimes h_{(2)}$.
		\item[2.] The map 
	\begin{equation}\label{eq:isom_twist}
	\eta\colon A_\F\xrightarrow{\sim} A, \qquad a\mapsto (f'\rhd a)u(f''),
	\end{equation} is an isomorphism of algebras.
  \item[3.] Let $V$ be an $A$-module, and denote the action of $A$ as $\rhd_A$. Define an action of $H$ on $V$ as $\rhd_H=\rhd_A\circ (u\otimes \id_V)$. Then $V$ is an object in $(H,A)\Mod$, and the twisted action $\rhd_\F:A_\F\otimes V_\F\rightarrow V_\F$, $\rhd_\F:=\rhd_A\circ (\F^{-1}\rhd )$ is equal to the pullback of $\rhd_A$ along $\eta$, i.e. $\rhd_\F=\rhd_A \circ (\eta\otimes \id_{V_\F})$.
\end{itemize}
\end{proposition}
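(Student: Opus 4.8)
The plan is to deduce all three parts from the smash‑product formalism, with Part~1 as the foundation. For Part~1, define $\Phi\colon A \# H \to A\otimes H$ by $\Phi(a \# h)=a\,u(h_{(1)})\otimes h_{(2)}$. To check $\Phi$ is multiplicative, expand $\Phi\big((a \# h)(b \# g)\big)=\Phi\big(a(h_{(1)}\rhd b) \# h_{(2)}g\big)$, substitute the adjoint formula $h_{(1)}\rhd b=u(h_{(1)})\,b\,u(Sh_{(2)})$ of Definition~\ref{adjoint_action}, use that $u$ is an algebra homomorphism, and collapse the inner portion via the antipode axiom $\sum S(h_{(1)})h_{(2)}=\epsilon(h)1$; this leaves $a\,u(h_{(1)})\,b\,u(g_{(1)})\otimes h_{(2)}g_{(2)}$, which is precisely $\Phi(a \# h)\,\Phi(b \# g)$ in the commuting tensor product. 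Bijectivity is immediate, the inverse being $b\otimes h\mapsto b\,u(Sh_{(1)}) \# h_{(2)}$, again by the antipode axiom. This is a direct, mechanical computation.

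For Part~2 I would first observe that $\eta$ is a composite of algebra maps, $A_\F=A_\F \# 1\hookrightarrow A_\F \# H^\F\xrightarrow{\sim}A \# H\xrightarrow{\sim}A\otimes H\xrightarrow{\id\otimes\epsilon}A$, where the first isomorphism is Proposition~\ref{prop:isom_semidirect_products}, the second is $\Phi$ from Part~1, and $\id\otimes\epsilon$ is an algebra map because $\epsilon$ is; chasing an element $a$ through, and using the counit axiom of $H$, yields exactly $a\mapsto(f'\rhd a)u(f'')$, so $\eta$ is an algebra homomorphism. This composite does \emph{not} make bijectivity automatic (the map $\id\otimes\epsilon$ is far from injective), so the substance of Part~2 is to produce a two‑sided inverse. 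I would take $\theta\colon A\to A_\F$, $\theta(a)=(g'\rhd a)u(g'')$ with $\F=g'\otimes g''$, and verify $\theta\circ\eta=\id_{A_\F}$ and $\eta\circ\theta=\id_A$: expand, say, $\eta(\theta(a))$ using that $\rhd$ is a module action and $A$ an $H$‑module algebra, rewrite $f'_{(2)(1)}\rhd u(g'')$ as $u\big(f'_{(2)(1)}\,g''\,S(f'_{(2)(2)})\big)$ via Definition~\ref{adjoint_action} and multiplicativity of $u$, and then collapse the resulting element of a tensor power of $H$ to $1\otimes 1$ using the $2$‑cocycle equation of Definition~\ref{def:2-cocycle}, the relation $\F\F^{-1}=1$, and the counit conditions. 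I expect this cocycle bookkeeping to be the main obstacle of the whole proposition; everything else is formal.

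Part~3 is then a short unwinding. Setting $h\rhd_H v=u(h)\rhd_A v$ gives an $H$‑action since $u$ is an algebra map, and the compatibility \eqref{eq:compatibility} is a one‑line check: its left side is $\big(u(h)a\big)\rhd_A v$, while its right side is $\big((h_{(1)}\rhd a)u(h_{(2)})\big)\rhd_A v=\big(u(h_{(1)})\,a\,u(Sh_{(2)})u(h_{(3)})\big)\rhd_A v=\big(u(h)a\big)\rhd_A v$ by the adjoint formula, multiplicativity of $u$, and the antipode axiom; hence $V\in(H,A)\Mod$. For the final identity I would simply unwind \eqref{eq:def_twist}: for $a\in A_\F$ and $v\in V_\F$, $a\rhd_\F v=(f'\rhd a)\rhd_A\big(u(f'')\rhd_A v\big)=\big((f'\rhd a)u(f'')\big)\rhd_A v=\eta(a)\rhd_A v$, using only associativity of the $A$‑action and the definition of $\eta$ — no cocycle identities are needed here. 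As a sanity check before carrying out the general cocycle collapse, I would run everything by hand in the motivating case $H=\Cc T$ with $T=T(2,1,n)$ the elementary abelian $2$‑group and $\F$ the non‑standard quasitriangular structure, where $\F$, $\F^{-1}$ and the collapsing identities all reduce to elementary statements about the function $T\times T\to\Cc$ defining $\F$.
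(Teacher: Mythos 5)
Parts 1 and 3 of your proposal are correct and complete. Part 1 is the explicit calculation that the paper outsources to citations (Montgomery's Example 7.3.3 and Berenstein--Schmidt), and your Part 3 reproduces the paper's own computation essentially verbatim, including the observation that no cocycle identity is needed for the final step $a\rhd_\F v=\eta(a)\rhd_A v$. Your description of $\eta$ as the composite $A_\F\hookrightarrow A_\F\# H^\F\xrightarrow{\sim}A\# H\to A$ is also exactly the paper's ``alternative'' argument for the homomorphism property in Part 2, and you are right that this composite does not by itself settle bijectivity.

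The gap is in your proposed proof of bijectivity. The paper does not prove this step itself: it cites Kulish--Mudrov, Theorem~2.1, for the whole of Part 2. You instead propose the explicit inverse $\theta(a)=(g'\rhd a)u(g'')$ with $\F=g'\otimes g''$ and assert that $\eta\circ\theta=\id$ will follow by ``collapsing to $1\otimes 1$'' using the cocycle equation, $\F\F^{-1}=1$ and counitality. Carrying out the expansion you describe gives $\eta(\theta(a))=(f'_{(1)}g'\rhd a)\,u\bigl(f'_{(2)}\,g''\,S(f'_{(3)})\,f''\bigr)$, so what you need is $\sum f'_{(1)}g'\otimes f'_{(2)}\,g''\,S(f'_{(3)})\,f''=1\otimes 1$ in $H\otimes H$. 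This does not follow from the listed identities for a general Hopf algebra $H$: the obstruction is the conjugation-type factor $f'_{(2)}\,g''\,S(f'_{(3)})$ wedged between the pieces that would otherwise cancel. Equivalently, writing $\Theta_X(a)=(x'\rhd a)u(x'')$ one finds $\Theta_X\circ\Theta_Y=\Theta_{X\bullet Y}$ for a smash-product-type multiplication $\bullet$ on $H\otimes H$ that is not the componentwise one, so $\F\F^{-1}=1$ does not yield $\F^{-1}\bullet\F=1\otimes 1$. When $H$ is commutative --- in particular for $H=\Cc T$ with $T=T(2,1,n)$, the only case used in this paper, where moreover $\F=\F^{-1}$ --- the offending factor collapses via the counit and your verification closes. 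But the Proposition is stated for arbitrary $H$, so as written your Part 2 needs either the general invertibility argument of Kulish--Mudrov (or some other construction of the inverse valid for arbitrary $H$), or an explicit restriction of your argument to the commutative setting.
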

\begin{proof}
Part~1 follows from \cite[Example 7.3.3]{montgomery_1993}, or see \cite[Theorem 3.18]{berenstein_schmidt} for a proof by explicit calculation. 
Part 2 is \cite[Theorem 2.1]{kulish_2011}. Alternatively, the isomorphism $\eta$ in~\eqref{eq:isom_twist} is obtained as the composite map $A_\F=A_\F\# 1 \subseteq A_\F\# H^\F \xrightarrow{\sim} A\#H \to A$, given by \eqref{eq:isom_semidirect_products} followed by the map $a\# h \mapsto au(h_{1})\epsilon(h_{(2)}) = au(h)$ which, by part~1, is a homomorphism of algebras.
For Part 3, we first must check equation \eqref{eq:compatibility} holds. Firstly note $h\rhd_H (a\rhd_A v)=u(h)a\rhd_A v$, whilst for $\triangle(h)=h_{(1)}\otimes h_{(2)}$,
\begin{align*}
(h_{(1)}\rhd a)\rhd_A (h_{(2)}\rhd v) & =u(h_{(1)(1)})au(S(h_{(1)(2)}))\rhd_A (u(h_{(2)})\rhd_A v)\\
& = u(h_{(1)})au(S(h_{(2)(1)}h_{(2)(2)}))\rhd_A v\\
& = u(\epsilon(h_{(2)})h_{(1)})a\rhd_A v = u(h)a\rhd_A v
\end{align*}
where in the second, third and fourth equalities we use coassociativity, and the antipode and counit axioms for Hopf algebras respectively. Now note $a\rhd_\F v=(f'\rhd a)\rhd_A (f''\rhd_H v)=(f'\rhd a)\rhd_A (u(f'')\rhd_A v)=\eta(a)\rhd_A v$, as required.
\end{proof}
 
\begin{remark}\label{pullback_rmk} An immediate corollary to Proposition \ref{twist_isom_prop}(3) is that, if $\rho:A\rt \End(V)$ is the representation corresponding to $\rhd_A$, and $\rho_\F$ is the representation corresponding to $\rhd_\F$, arising as a result of the Giaquinto and Zhang twist, then we have that $\rho_\F$ is given by the pullback of $\rho$ along $\eta$, i.e. $\rho_\F=\rho\circ \eta$. Additionally, if $\chi$ and $\chi_\F$ are the characters of $\rho$ and $\rho_\F$ respectively, then $\chi_\F=\chi\circ \eta$.
\end{remark}

\subsection{Twists of algebra factorisations.}
Later we wish to discuss rational Cherednik algebras, which, owing to their PBW property, are examples of algebra factorisations. In this section, we prove a general result that twisting preserves the algebra factorisation structure.

\begin{definition}
If $C$ is an algebra, then an \define{algebra factorisation} $C=A\cdot B$ is a pair of subalgebras $A$, $B$ of an associative unital 
$k$-algebra $C$ such that the restriction of the multiplication map $m\colon {C\tensor C}\to C$ to $A\tensor B$ 
is an isomorphism 
\begin{equation}
\label{eq:basic isom}
m|_{A\otimes B}\colon A\tensor B \xrightarrow{\sim} C
\end{equation}
of $k$-vector spaces. To an algebra factorisation there is associated a linear map $\Psi_C\colon B\otimes A \to A\otimes B$ given by
\begin{equation}\label{eq:Psi}
	\Psi_C\colon B\otimes A \hookrightarrow C \otimes C \xrightarrow{m} C \xrightarrow{(m|_{A\otimes B})^{-1}} A \otimes B.
\end{equation}
\end{definition}
\nt The map $\Psi_C$ obeys the ``generalised braiding'' equations given by Majid in \cite[Proposition 21.4]{majid_2002}.\bb

\nt If $H$ is a Hopf algebra, we consider an algebra factorisation in $H\Mod$ to be an algebra factorisation $C=A\cdot B$ in which $C$ is an $H$-module algebra, and $A$, $B$ are $H$-module subalgebras of $C$. In this case, the isomorphism \eqref{eq:basic isom} and the map $\Psi_C$  given by \eqref{eq:Psi} are $H$-module homomorphisms.
Furthermore, given a $2$-cocycle $\F\in H\otimes H$, the Drinfeld twists $A_{\F},\ B_{\F}$ of $A,\ B$ are clearly $H^\F$-module subalgebras of the Drinfeld twist $C_{\F}$ of $C$. Additionally, the following holds,
\begin{proposition}\label{prop:twist_of_factorisation}
If $C=A\cdot B$ is an algebra factorisation in $H\Mod$, then $C_{\F}$ is an algebra factorisation $C_{\F}=A_{\F}\cdot B_{\F}$ in $H^{\F}\Mod$, and $\Psi_{C_{\F}} = (\F\rhd)\circ \Psi_C\circ (\F^{-1}\rhd)$.
\end{proposition}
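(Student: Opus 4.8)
The plan is to verify the two assertions of Proposition~\ref{prop:twist_of_factorisation} in order: first that $m_{\F}|_{A_{\F}\otimes B_{\F}}$ is still a linear isomorphism $A_{\F}\otimes B_{\F}\xrightarrow{\sim} C_{\F}$, and then to compute the braiding map $\Psi_{C_{\F}}$. For the first part, note that as $k$-vector spaces $A_{\F}=A$, $B_{\F}=B$, $C_{\F}=C$, so the underlying linear map $m_{\F}|_{A\otimes B}$ differs from $m|_{A\otimes B}$ only by precomposition with $\F^{-1}\rhd$, which is an invertible endomorphism of $A\otimes B$ (it is the restriction of the $H\otimes H$-action, and $A$, $B$ are $H$-submodules of $C$, so $\F^{-1}\rhd$ preserves $A\otimes B\subseteq C\otimes C$). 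Hence $m_{\F}|_{A_{\F}\otimes B_{\F}} = \bigl(m|_{A\otimes B}\bigr)\circ(\F^{-1}\rhd)$ is a composite of two linear isomorphisms, so it is an isomorphism, giving the factorisation $C_{\F}=A_{\F}\cdot B_{\F}$ in $H^{\F}\Mod$. One should also remark that $A_{\F}$ and $B_{\F}$ are genuinely subalgebras of $C_{\F}$ under $m_{\F}$, which is immediate since $m_{\F}$ restricted to $A\otimes A$ (resp. $B\otimes B$) lands in $A$ (resp. $B$) because $\F^{-1}\rhd$ preserves those submodules and $m$ does too.

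For the braiding formula, I would unwind the definition~\eqref{eq:Psi} of $\Psi_{C_{\F}}$. By definition $\Psi_{C_{\F}}$ is the composite $B\otimes A\hookrightarrow C\otimes C\xrightarrow{m_{\F}} C\xrightarrow{(m_{\F}|_{A\otimes B})^{-1}} A\otimes B$. Substituting $m_{\F}=m\circ(\F^{-1}\rhd)$ and, from the first part, $(m_{\F}|_{A\otimes B})^{-1} = (\F\rhd)\circ (m|_{A\otimes B})^{-1}$, this composite becomes
\[
(\F\rhd)\circ (m|_{A\otimes B})^{-1}\circ m\circ (\F^{-1}\rhd),
\]
where the inner $\F^{-1}\rhd$ is applied on $B\otimes A\subseteq C\otimes C$ and the outer $\F\rhd$ on $A\otimes B$. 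Recognising $(m|_{A\otimes B})^{-1}\circ m|_{B\otimes A}$ as precisely $\Psi_C$, this is exactly $(\F\rhd)\circ \Psi_C\circ (\F^{-1}\rhd)$, as claimed.

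The only genuinely delicate point — and the one I would be most careful about — is the bookkeeping of \emph{which} copy of $H\otimes H$ acts where: in the expression $(\F\rhd)\circ\Psi_C\circ(\F^{-1}\rhd)$ the map $\F^{-1}\rhd$ must be read as the action on $B\otimes A$ (with the first tensor leg of $\F^{-1}$ acting on $B$ and the second on $A$), whereas $\F\rhd$ acts on $A\otimes B$; these are legitimate because $A,B,C$ are all $H$-module algebras and the inclusions $B\otimes A\hookrightarrow C\otimes C$ and $A\otimes B\hookrightarrow C\otimes C$ are $H\otimes H$-module maps, so $\F^{\pm1}\rhd$ commutes with these inclusions. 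There is no need to invoke the cocycle condition on $\F$ here at all — that condition is what makes $m_{\F}$ associative, but associativity of $C_{\F}$ is already guaranteed by Proposition~\ref{prop:twist_is_functorial}; the present proposition is purely a statement about the linear factorisation and the induced braiding, so it reduces to the two short diagram chases above.
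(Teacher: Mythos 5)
Your proposal is correct and follows essentially the same route as the paper: both arguments rest on the identity $m_{\F}|_{A_{\F}\otimes B_{\F}} = m|_{A\otimes B}\circ(\F^{-1}\rhd)$, deduce bijectivity as a composite of bijections, and obtain the formula for $\Psi_{C_{\F}}$ by substituting $m_{\F}$ and $(m_{\F}|_{A\otimes B})^{-1}=(\F\rhd)\circ(m|_{A\otimes B})^{-1}$ into the definition \eqref{eq:Psi}. Your additional remarks (that $A_{\F}$, $B_{\F}$ are subalgebras, and the bookkeeping of which tensor legs $\F^{\pm1}$ acts on) are correct elaborations of points the paper leaves implicit.
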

\begin{proof}
Recall from Proposition~\ref{prop:twist_is_functorial} that the product $m_{\F}$ on $C_{\F}$ is given by $m_{\F}=m\circ (\F^{-1} \rhd )$,
hence its restriction onto 
$A_{\F}\otimes B_{\F}$ is the composition
$m_{\F}|_{A_{\F}\otimes B_{\F}}
= m|_{A\otimes B} \circ (\F^{-1} \rhd )|_{A\otimes B}$ where both maps on the right-hand side are bijective.
Therefore, $m_{\F}|_{A_{\F}\otimes B_{\F}}\colon A_{\F}\otimes B_{\F} \to C_{\F}$ is bijective, as required.
Substituting the formulas for $m_{\F}$
and $m_{\F}|_{A_{\F}\otimes B_{\F}}$ in~\eqref{eq:Psi}, we obtain $\Psi_{C_{\F}}$ as stated.
\end{proof}

\subsection{Induced representations.}
Induced representations of associative algebras were defined by Higman \cite{higman_1955}, extending the notion from group theory. Given a subalgebra $B$ of an associative unital algebra $C$
(so that $C$ is a $C$--$B$ bimodule via the regular action), the induction functor 
\begin{equation*}
	\ind_B^C\colon B\Mod \to C\Mod, \qquad 
	\ind_B^C(V) = C\otimes_B V,
\end{equation*}
is a left adjoint of the restriction functor from $C$ to $B$, see~\cite{rieffel_1975}. If $C=A\cdot B$ is an algebra factorisation and $V$ is a $B$-module, one has 
\begin{equation*}
\ind_B^C(V) \cong A\otimes V
\end{equation*}
as vector spaces, and moreover, as left $A$-modules. The action of $C$ on $\ind_B^C(V)$ is given by
\begin{equation}\label{eq:action_on_induced}
\rhd_C \colon C\otimes A \otimes V 
\xrightarrow{(m|_{A\otimes B})^{-1} \otimes \id_{A \otimes V}}
A\otimes B\otimes A \otimes V 
\xrightarrow{\id_A \otimes \Psi_C \otimes \id_V}
A \otimes A \otimes B \otimes V 
\xrightarrow{m\otimes \rhd_B}
A \otimes V
\end{equation}
where $\rhd_B$ is the action of $B$ on $V$. As pointed out above, when $A\cdot B$ is an algebra factorisation in $H\Mod$, then \eqref{eq:basic isom} and \eqref{eq:Psi} are $H$-module homomorphisms. Therefore, if $V\in (H,B)\Mod$, then, in view of \eqref{eq:action_on_induced}, we see that $\ind_B^C(V)\in (H,C)\Mod$.\bb

\nt We now show that for algebra factorisations, the induction functor commutes with the twist functor. More precisely, 
\begin{proposition}\label{twisting_induced_rep_prop}
If $C=A\cdot B$ is an algebra factorisation in $H\Mod$ and 
$V$ is a $(H,B)$-module, then 
\begin{equation}
     \ind_{B_{\F}}^{C_{\F}}(V_{\F})
    \cong
    (\ind_B^C(V))_{\F} 
\end{equation}
as $C_{\F}$-modules, where $B_\F$ and $C_\F$ are Drinfeld twists, and $V_\F$ and $(\ind_B^C(V))_\F$ are Giaquinto and Zhang twists, as in \eqref{eq:def_twist}. The two $C_{\F}$-module structures on the underlying vector space $A\otimes V$ are intertwined by the map $\F^{-1}\rhd\colon A\otimes V \to A \otimes V$. 
\end{proposition}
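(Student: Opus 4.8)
The plan is to write down both $C_{\F}$-actions on the common underlying vector space $A\otimes V$ explicitly and verify that the linear isomorphism $\F^{-1}\rhd\colon A\otimes V\to A\otimes V$ intertwines them; once this is done, the claimed isomorphism of $C_{\F}$-modules follows immediately. The first action is the one on $(\ind_B^C(V))_{\F}$: by the Giaquinto–Zhang recipe \eqref{eq:def_twist}, it is $\rhd_C\circ(\F^{-1}\rhd)$, where $\rhd_C$ is the $C$-action on $\ind_B^C(V)=A\otimes V$ given by the composite \eqref{eq:action_on_induced}, and $\F^{-1}\rhd$ acts on $C\otimes A\otimes V$ via the $H$-module structures on $C$ and on $A\otimes V$ (the latter being the tensor of the $H$-actions on $A$ and $V$). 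The second action is the $C_{\F}$-action on $\ind_{B_{\F}}^{C_{\F}}(V_{\F})$, which by \eqref{eq:action_on_induced} applied inside $H^{\F}\Mod$ is built from $(m_{\F}|_{A_{\F}\otimes B_{\F}})^{-1}$, $\Psi_{C_{\F}}$, $m_{\F}$, and $\rhd_{B,\F}$.

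The key steps are then as follows. First I would substitute the twisted data into the second action: $m_{\F}=m\circ(\F^{-1}\rhd)$ and $m_{\F}|_{A_{\F}\otimes B_{\F}}=m|_{A\otimes B}\circ(\F^{-1}\rhd)|_{A\otimes B}$ from Proposition~\ref{prop:twist_is_functorial} and the proof of Proposition~\ref{prop:twist_of_factorisation}; $\Psi_{C_{\F}}=(\F\rhd)\circ\Psi_C\circ(\F^{-1}\rhd)$ from Proposition~\ref{prop:twist_of_factorisation}; and $\rhd_{B,\F}=\rhd_B\circ(\F^{-1}\rhd)$ from \eqref{eq:def_twist}. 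Second, I would use the cocycle identity $(\F\otimes1)\cdot(\triangle\otimes\id)(\F)=(1\otimes\F)\cdot(\id\otimes\triangle)(\F)$ and the counit axioms from Definition~\ref{def:2-cocycle} to move the various copies of $\F^{\pm1}\rhd$ past the $H$-module maps $(m|_{A\otimes B})^{-1}$, $\Psi_C$, $m$ — all of which are $H$-module homomorphisms — so that the interior $\F\rhd$ in $\Psi_{C_{\F}}$ cancels against an adjacent $\F^{-1}\rhd$, and the remaining cocycle factors reassemble into exactly the expression $\big((\F^{-1}\rhd)\big)\circ\rhd_C\circ\big(\id_C\otimes(\F\rhd)\big)$ on $C\otimes A\otimes V$. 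Equivalently, conjugating the second action by $\F\rhd\colon A\otimes V\to A\otimes V$ and precomposing with the appropriate $\F$-factor on the $C$-slot should return $\rhd_C\circ(\F^{-1}\rhd)$, which is the first action. This is precisely the statement that $\F^{-1}\rhd$ is a $C_{\F}$-module map from $(\ind_B^C(V))_{\F}$ to $\ind_{B_{\F}}^{C_{\F}}(V_{\F})$.

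I expect the main obstacle to be purely bookkeeping: \eqref{eq:action_on_induced} is a three-step composite in which $\F$ enters in four places (once from each of $m_{\F}$, $(m_{\F}|_{A\otimes B})^{-1}$, $\rhd_{B,\F}$, and twice-but-cancelling in $\Psi_{C_{\F}}$), and one must track how each $\F^{\pm1}\rhd$ commutes with tensor factors $\id_A$, $\id_V$ and with coproduct insertions, so that the cocycle condition can be applied in the correct slots. A clean way to organise this — and the route I would actually take — is to bypass the direct computation entirely and argue via the smash product: by Proposition~\ref{prop:category_equivalence_smash} an $(H,C)$-module is the same as a $C\#H$-module, $\ind_B^C(V)$ corresponds to $\ind_{B\#H}^{C\#H}(V)$, twisting corresponds under Proposition~\ref{prop:isom_semidirect_products} to transport of structure along the algebra isomorphism $C_{\F}\#H^{\F}\xrightarrow{\sim}C\#H$ (which restricts to $B_{\F}\#H^{\F}\xrightarrow{\sim}B\#H$), and induction manifestly commutes with transport along a ring isomorphism restricting to the subalgebra. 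Chasing the vector-space identifications through Proposition~\ref{prop:isom_semidirect_products} then yields $\F^{-1}\rhd$ as the intertwiner. I would present the smash-product argument as the proof and relegate the direct verification to a remark, since the former makes the appearance of $\F^{-1}\rhd$ conceptually transparent while the latter is the kind of diagram chase best left to the reader.
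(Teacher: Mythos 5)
Your proposal is correct, and your preferred route is genuinely different from the paper's. The paper proves the statement by direct verification that $\F^{-1}\rhd$ intertwines the two actions, but with an organizational device you do not mention and which defuses the bookkeeping you worry about: since $C_{\F}=A_{\F}\cdot B_{\F}$ (Proposition~\ref{prop:twist_of_factorisation}), the algebra $C_{\F}$ is generated by $A_{\F}$ and $B_{\F}$, so it suffices to check the intertwining separately for these two subalgebras. For $A_{\F}$ the induced action collapses to left multiplication $m_{\F}\otimes\id_V$, giving a small diagram that commutes by the cocycle equation and $H$-linearity of $m$; for $B_{\F}$ one gets a single larger diagram using the cocycle equation twice, $H$-linearity of $\Psi_C$, and $H$-linearity of $\rhd_B$, with the interior $\F\rhd$ of $\Psi_{C_{\F}}$ cancelling exactly as you predict. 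Your full-composite computation would work but is harder to keep straight (and your displayed reassembled expression has the $\F^{\pm1}$ placements slightly off — the correct identity is $(\F^{-1}\rhd)\circ\alpha=\beta\circ(\id_C\otimes(\F^{-1}\rhd))$ where $\alpha$, $\beta$ are the two actions); the generator trick is the clean way to do the direct argument. Your smash-product argument is sound and arguably more conceptual: the identification $\ind_B^C(V)\cong\ind_{B\#H}^{C\#H}(V)$ as $(H,C)$-modules, the compatibility of the Kulish--Mudrov isomorphisms for $C$ and $B$ (same formula, so the one for $C$ restricts to the one for $B$), and transport of induction along a ring isomorphism (which is exactly the paper's later Lemma~\ref{ind_isom_lem}) together yield the result, and chasing $(a\#1)\otimes v\mapsto((f'\rhd a)\#f'')\otimes v=((f'\rhd a)\#1)\otimes f''\rhd v$ does produce $\F^{-1}\rhd$ as the intertwiner. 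What the paper's approach buys is self-containedness and an explicit formula-level verification; what yours buys is that the appearance of $\F^{-1}\rhd$ is forced rather than checked. If you write up the smash-product version, do spell out the identification of the induced $(H,C)$-module structure of \eqref{eq:action_on_induced} with the $C\#H$-module $\ind_{B\#H}^{C\#H}(V)$, since that is the one step that is not purely formal.
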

\begin{proof}
Above we explained that $\ind_B^C(V)\in (H,C)\Mod$, so firstly we find that the Giaquinto and Zhang twist $(\ind_B^C(V))_\F$ is a well-defined object.\bb

\nt Now by Proposition~\ref{prop:twist_of_factorisation}, the algebra
$C_{\F}$ is generated by  $A_{\F}$ 
and $B_{\F}$, therefore it is enough to show that 
the two actions of $A_{\F}$ on the space $A\otimes V$ are intertwined by $\F^{-1}\rhd$, and to do the same for $B_{\F}$.\bb

\nt In the following diagram, the top row represents the action of $A_{\F}$ on the induced module 
$\ind_{B_{\F}}^{C_{\F}}(V_{\F})$: 
the composite arrow is $m_{\F}\otimes \id_V$, 
which is indeed the action of $A_{\F}$ on the free 
left module $A_{\F}\otimes V$. The bottom row 
is the action of $A$ on $\ind_B^C(V)$, twisted by $\F$.
$$\xymatrix@C=7em{A\otimes A\otimes V \ar[r]^{(\F^{-1}\rhd) \otimes \id_V} \ar[d]_{\id_A\otimes (\F^{-1}\rhd)} & A\otimes A\otimes V \ar[r]^{m\otimes \id_V} \ar[d]_{(\triangle \otimes \id)(\F^{-1})\rhd} & A\otimes V \ar[d]_{\F^{-1}\rhd} \\
A\otimes A\otimes V \ar[r]_{(\id \otimes \triangle)(\F^{-1})\rhd } & A\otimes A \otimes V \ar[r]_{m\otimes \id_V} & A\otimes V}$$
 The leftmost square of the diagram commutes by the cocycle equation, and the rightmost square commutes because 
$m\colon A\otimes A \to A$ is an $H$-module morphism. 
Hence the diagram commutes, proving that $\F^{-1}\rhd$ intertwines the two $A_{\F}$-actions 
on $A\otimes V$.\bb

\nt The second diagram deals with the two actions of $B_{\F}$. The top row consists in applying 
$\Psi_{C_{\F}}$, which by Proposition~\ref{prop:twist_of_factorisation} is $(\F\rhd)\circ \Psi_C\circ (\F^{-1}\rhd)$ to the first two legs, followed by the action 
$\rhd_{B_{\F}} = \rhd_B\circ (\F^{-1}\rhd)$
on~$V$. By~\eqref{eq:action_on_induced}, this gives the action of $B_{\F}$ on $\ind_{B_{\F}}^{C_{\F}}(V_{\F})$.
The bottom row is the $\F$-twisted action of~$B$ on $\ind_B^C(V)$. To make the diagram more compact, we write 
$\F_{12}$ to denote $\F$ acting on the first two legs of the tensor product, i.e., the operator 
$(\F\rhd \otimes \id)$; similarly for other operators:
$$\xymatrix@C=2.5em@R=5em{B\otimes A\otimes V \ar[r]^{\F^{-1}_{12}} \ar[d]_{\F_{23}^{-1}} & B\otimes A\otimes V \ar[r]^{(\Psi_C)_{12}} \ar[d]_{(\triangle \otimes \id)(\F^{-1})\rhd} & A\otimes B \otimes V \ar[r]^{\F_{12}} \ar[d]_{(\triangle \otimes \id)(\F^{-1})\rhd} & A\otimes B\otimes V \ar[r]^{\F_{23}^{-1}} & A\otimes B\otimes V \ar[r]_{(\rhd_B)_{23}} \ar[d]_{(\id\otimes \triangle)(\F^{-1})\rhd} & A\otimes V \ar[d]_{\F^{-1}\rhd} \\
B\otimes A\otimes V \ar[r]_{(\id \otimes \triangle)(\F^{-1})\rhd } & B\otimes A \otimes V \ar[r]_{(\Psi_C)_{12}} & A\otimes B\otimes V \ar@{=}[r] & A\otimes B\otimes V \ar@{=}[r] & A\otimes B\otimes V \ar[r]_{(\rhd_B)_{23}} & A\otimes V}$$
The first (leftmost) square commutes by the cocycle equation, 
the second square commutes because $\Psi_C$ is an $H$-module morphism, the third square is the cocycle equation again, 
and the rightmost square commutes because the action $\rhd_B$ of $B$ 
is a morphism in $H\Mod$. Hence the diagram commutes, and 
the two actions of $B_\F$ are indeed intertwined by $\F^{-1}\rhd$,
as claimed.
%
%
%
\end{proof}

\section{Twisting irreducible characters of Coxeter groups}\label{twisting_characters_sec} 
In \cite[Theorem 6.3]{twistsrcas} we showed Drinfeld twists induce a non-trivial permutation of the four linear characters of the Coxeter group of type $B_n$. Here we will extend this result, showing how twisting permutes all of the irreducible characters of $B_n$. Additionally, we show how twisting induces a bijective correspondence between the irreducible characters of the Coxeter group of type $D_n$ and those of its mystic partner $\mu(D_n)$. In particular we describe the permutation for $B_n$, and the bijective correspondence between $D_n$ and $\mu(D_n)$, using a partition conjugation action.\bb

\nt Before discussing the Coxeter groups $B_n$ and $D_n$ let us first recall the family of complex reflection groups $G(m,p,n)$, and their mystic counterparts $\mu(G(m,p,n))$. For $n\in \N$, let $\mathbb{G}_n$ and $\mathbb{S}_n$ denote the groups of $n\times n$-monomial, and permutation, matrices over $\Cc$ respectively. 
\begin{definition}\label{gmpn_defn} For $m,p,n\in \N$ with $p|m$, define the following groups:
\begin{itemize}
  \item $G(m,p,n)$ is the subgroup of $\mathbb{G}_n$ formed by matrices where all the non-zero entries are $m$-th roots of unity, and the product of all non-zero entries is an $\frac{m}{p}$-th root of unity.
  \item $\mu(G(m,p,n))$ is the subgroup of $\mathbb{G}_n$ with non-zero entries being $m$-th roots of unity, and such that the determinant of the matrix is an $\frac{m}{p}$-th root of unity.
\end{itemize}
\end{definition}
\nt We write $(\Cc ^\times)^n $  to denote the subgroup of all diagonal matrices in $\mathbb G_n$ and put $T(m,p,n)= (\Cc ^\times)^n\cap G(m,p,n)$. The diagonal matrix whose $(i,i)$-th entry is $\epsilon$ and the rest of diagonal entries are $1$ will be denoted by $t_i^{(\epsilon)}$. The elements $t_i^{(-1)}$ play a special role in the paper, and we abbreviate $t_i^{(-1)}$ to $t_i$.\bb

\nt Note that if $\frac{m}{p}$ is even, then $\mu(G(m,p,n))=G(m,p,n)$. Bazlov and Berenstein \cite{mystic_reflections} defined a family of algebra isomorphisms $J_c:\mathbb{C}\mathbb{G}_n\xrightarrow{\sim}\mathbb{C}\mathbb{G}_n$, which for certain choices of $c\in \Cc^\times$ and $m,p\in \N$, restrict to an isomorphism from $\Cc \mu(G(m,p,n))$ to $\Cc G(m,p,n)$. For $c\in \Cc^\times,\ t\in (\Cc^\times)^n,\ w\in \mathbb{S}_n$, the map $J_c$ is given by
\begin{equation}\label{j_map}
J_c(wt):=wt \prod_{\{i<j| w(i)>w(j)\}}\frac{1}{4}\big ((c+c^{-1})(1-t_i t_j)+(c-c^{-1}+2)t_i+(c^{-1}-c+2)t_j \big)
\end{equation}
When $w=1$, the set ${\{i<j| w(i)>w(j)\}}$ is empty, and so $J_c(t)=t\ \forall t\in (\Cc^\times)^n$. Therefore the interesting behaviour of $J_c$ comes from how it evaluates on the standard generators $s_i,\ 1\leq i < n$, of $\mathbb{S}_n$. We will be particularly interested in the cases where $c=1$ and $c=-\mathbf i=\sqrt{-1}$. These evaluate on $s_i$ as follows:
\begin{align}
J_1(s_i) & =\frac{1}{2}s_i(1+t_i+t_{i+1}-t_it_{i+1})\label{j_1_eq}\\
J_{-\mathbf i}(s_i)& = \frac{1}{2}s_i((1-\mathbf i)t_i+(1+\mathbf i)t_{i+1})\label{j_minus_i_eq}
\end{align}
Note that when $\frac{m}{p}$ is even, $J_1$ restricts to an isomorphism $\Cc \mu(G(m,p,n))\xrt{\sim} \Cc G(m,p,n)$. Later in Section \ref{application_to_rcas_sec} we will see that, when $\frac{m}{p}$ is even, $J_1$ in fact coincides precisely with the embedding map $\eta\phi$ of Theorem \ref{embedding_thm}, after restricting $\eta\phi$ to the subalgebra $\Cc \mu(G(m,p,n))$ of the negative braided Cherednik algebra $\underline{H}_{\underline{c}}(\mu(G(m,p,n)))$. Here $\phi$ is an isomorphism, constructed in \cite{twistsrcas}, between the $\underline{H}_{\underline{c}}(\mu(G(m,p,n)))$
and the twist $H_c(G(m,p,n))_{\mathcal F}$ of the rational Cherednik algebra, and $\eta$ is a new map which arises from the results
given in Section~\ref{sec:general_twists}.\bb

\nt Now, since $J_1=\eta\phi$ is an isomorphism, the pullback of a representation $\rho:\Cc G(m,p,n)\rt \End(V)$ along $J_1$ gives rise to a representation $\rho\circ J_1$ of the group $\mu(G(m,p,n))$. Based on Remark \ref{pullback_rmk}, pulling back $\rho$ along $\eta$ corresponds to twisting the representation of $\Cc G(m,p,n)$ (in the sense of Giaquinto and Zhang) into a representation $\Cc G(m,p,n)_{\mathcal F}$. Pulling back $\rho\circ \eta$ along the map $\phi$ (from \cite[Theorem 5.2]{twistsrcas}) allows us to interpret the twisted representation $\rho\circ \eta$ as another group representation, in particular of $\mu(G(m,p,n))$. Therefore we interpret the action of pulling back a representation of $G(m,p,n)$ along $J_1$ as the action on representations induced by twisting.\bb

\nt In Section \ref{twisting_characters_b_sec} we describe this action explicity for the groups $G(2,1,n)$, which are the Coxeter groups of type $B_n$. We then turn our attention to the map $J_{-\mathbf i}$ in \eqref{j_minus_i_eq}. Although this map doesn't directly arise from any twisting constructions, it turns out that pulling back the irreducible characters of $B_n$ via $J_{-\mathbf i}$, instead of $J_1$, induces the same permutation of characters. The advantage of $J_{-\mathbf i}$ however is that it restricts to an isomorphism $\Cc \mu(G(m,p,n))\xrt{\sim} \Cc G(m,p,n)$ when $\frac{m}{p}$ is odd and even. We can therefore use $J_{-\mathbf i}$ to map the irreducible characters of Coxeter group of type $D_n$, given by $G(2,2,n)$, to those of $\mu(G(2,2,n))$. We describe this mapping explicity in Section \ref{twisting_characters_d_sec}.
\subsection{Twisting irreducible characters of \texorpdfstring{$B_n$}{}.}\label{twisting_characters_b_sec}
Recall the group $G(2,1,n)$ is isomorphic to the Coxeter group of type $B_n$. This group is well-known  \cite[Section 1.6.3]{alma9920315464401631} to have irreducible characters labelled by bipartitions of $n$, i.e.\ ordered pairs $(\lambda,\mu)$ where $\lambda$ and $\mu$ are partitions of $a,b\in \N$ respectively, where $a+b=n$. Since $\frac{m}{p}$ is even for this group, the map $\eta\phi$ of Theorem \ref{embedding_thm} restricts to an isomorphism $\Cc \mu(G(2,1,n))\xrightarrow{\sim} \Cc G(2,1,n)$. It is clear from Definition \ref{gmpn_defn} that $\mu(G(2,1,n))=G(2,1,n)$, and therefore we see $\eta\phi$ gives rise to an automorphism of $\Cc G(2,1,n)$. So the pullback of an irreducible character of $B_n$ along $\eta\phi$ will be another irreducible character of $B_n$. The following result describes this permutation of the characters explicity.

\begin{proposition}\label{permutation_of_characters_b_prop}
$\chi_{(\lambda,\mu)}\circ \eta\phi=\chi_{(\lambda,\mu^*)}$ where $\lambda \vdash a$ and $\mu \vdash b$ with $a+b=n$.

\begin{proof}
By \cite[Section 5.5.4]{alma9920315464401631}, the irreducible characters of $B_n$ are given as $$\chi_{(\lambda,\mu)}=\text{Ind}^{B_n}_{B_a\times B_b}(\tilde \chi_\lambda \boxtimes (\epsilon'_b \otimes \tilde \chi_\mu))$$
where $\tilde \chi_\mu\in \text{Irr}(B_a)$ is the pullback of the irreducible character $\chi_\lambda\in S_a$ (see \cite[Definition 5.4.4]{alma9920315464401631})
along the projection map $B_a \rightarrow S_a$, $\boxtimes$ denotes outer tensor product, and $\epsilon'_b$ is the restriction to $B_b$ of the linear character $\epsilon'$ on $B_n$ which sends $t_i\mapsto -1$ and $s_i\mapsto +1$. Precomposing with $\eta\phi$ we find:
\begin{align*}
\chi_{(\lambda,\mu)}\circ \eta\phi & = \text{Ind}^{B_n}_{B_a\times B_b}(\tilde \chi_\lambda \boxtimes (\epsilon'_b \otimes \tilde \chi_\mu)) \circ \eta \phi\\
& = \text{Ind}^{B_n}_{B_a\times B_b}(\chi_{(\lambda,\varnothing)} \boxtimes (\epsilon'_b \otimes \chi_{(\mu,\varnothing)})) \circ \eta \phi\\
& = \text{Ind}^{B_n}_{B_a\times B_b}(\chi_{(\lambda,\varnothing)} \boxtimes \chi_{(\varnothing,\mu)})) \circ \eta \phi.
\end{align*}
The first equality just uses the definition of $\chi_{(\lambda,\mu)}$, whilst the second rewrites $\tilde \chi_\lambda$ as $\chi_{(\lambda,\varnothing)}$, i.e. via the unique bipartition of $a$ which characterises it. We do similarly for $\tilde \chi_\mu$. In the 3rd equality we apply \cite[Theorem 5.5.6(c)]{alma9920315464401631} which says $\epsilon'\otimes \chi_{(\lambda,\mu)}=\chi_{(\mu,\lambda)}$.\bb

\nt Now note that the functor which sends a character $\psi$ of $B_a\times B_b$ to the
character $\text{Ind}^{B_n}_{B_a\times B_b}(\psi)\circ \eta\phi$
is a composition of two functors, 
the induction functor $\text{Ind}^{B_n}_{B_a\times B_b}\colon \text{Rep}(B_a\times B_b) \to \text{Rep}(B_n)$
followed by the autofunctor $\text{Precomp}_{\eta\phi}$ of $\text{Rep}(B_n)$ given by precomposing a representation of 
$\Cc B_n$ with $\eta\phi$. Note that $\text{Precomp}_{\eta\phi}$
is a permutation of irreducible characters of $B_n$ which is 
involutive, because $(\eta\phi)^2=(J_1)^2=\id$.
Hence $\text{Precomp}_{\eta\phi}$ is a self-adjoint functor.
It follows, by Frobenius reciprocity, that $\text{Precomp}_{\eta\phi}\circ \text{Ind}^{B_n}_{B_a\times B_b}$ is adjoint 
to the functor $\text{Res}^{B_n}_{B_a\times B_b}\circ\text{Precomp}_{\eta\phi} \colon \text{Rep}(B_n) \to \text{Rep}(B_a\times B_b)$. But since $\eta\phi$ restricted to $\Cc B_a\subseteq \Cc B_n$ is an automorphism of the group algebra $\Cc B_a$, same for $B_b$, it is obvious that 
this functor is the same as 
$(\text{Precomp}_{\eta\phi|_{B_a}}\boxtimes \text{Precomp}_{\eta\phi|_{B_b}})\circ \text{Res}^{B_n}_{B_a\times B_b}$. 
Taking adjoints again, we conclude that
$$
\chi_{(\lambda,\mu)}\circ \eta\phi = \text{Ind}^{B_n}_{B_a\times B_b}((\chi_{(\lambda,\varnothing)} \circ \eta \phi|_{B_a}) \boxtimes (\chi_{(\varnothing,\mu)} \circ \eta \phi|_{B_b})).
$$
At this point we claim and prove the following,
\begin{equation}\label{action_on_linear_characters}
\chi_{(\lambda,\varnothing)}\circ \eta\phi|_{B_a}=\chi_{(\lambda,\varnothing)}\hspace{1cm}\chi_{(\varnothing,\mu)}\circ \eta\phi|_{B_b}=\chi_{(\varnothing,\mu^*)}
\end{equation}
The left hand identity follows since $\chi_{(\lambda,\varnothing)}=\chi_\lambda \circ \pi$ for $\pi:B_n\rightarrow S_n$ the standard projection map, and on noting that $\pi\circ \eta\phi=\pi$, we deduce $\chi_{(\lambda,\varnothing)}\circ \eta\phi=\chi_\lambda \circ \pi \circ \eta\phi=\chi_\lambda \circ \pi=\chi_{(\lambda,\varnothing)}$.\bb

\nt It remains to check the right hand identity. Firstly, $(\epsilon'_b\circ \eta\phi)(t_i) = \epsilon'_b(t_i) = -1 = \epsilon_b(t_i), \forall i=a+1,\dots,n$. Also, $(\epsilon'_b\circ \eta\phi)(s_i) = \epsilon'_b(s_i\cdot \frac12 (1+t_i+t_{i+1}-t_it_{i+1})) = 1\cdot \frac12 (1-1-1-1) = -1 = \epsilon_b(s_i), \forall i=a+1,\dots,n-1$. Since $\epsilon'_b \circ \eta\phi$ and $\epsilon_b$ are algebra homomorphisms $\Cc B_b \to \Cc$ which agree on generators, one has $\epsilon'_b \circ \eta\phi = \epsilon_b$. So $\chi_{(\varnothing,\mu)}\circ \eta\phi|_{B_b}= 
(\epsilon'_b\otimes \chi_{(\mu,\varnothing)}) \circ \eta\phi|_{B_b} = (\epsilon'_b \circ \eta\phi|_{B_b} ) \otimes (\chi_{(\mu , \varnothing)} \circ \eta\phi|_{B_b} ) = \epsilon_b \otimes \chi_{(\mu , \varnothing)} = \chi_{(\varnothing,\mu^*)}$, where the final equality again applies \cite[Theorem 5.5.6(c)]{alma9920315464401631} which says $\epsilon\otimes \chi_{(\lambda,\mu)}=\chi_{(\mu^*,\lambda^*)}$.\bb

\nt With these two identities we find,
\begin{align*}
\chi_{(\lambda,\mu)}\circ \eta\phi & =\text{Ind}^{B_n}_{B_a\times B_b}((\chi_{(\lambda,\varnothing)} \circ \eta \phi|_{B_a}) \boxtimes (\chi_{(\varnothing,\mu)} \circ \eta \phi|_{B_b}))\\
& = \text{Ind}^{B_n}_{B_a\times B_b}(\chi_{(\lambda,\varnothing)}\boxtimes \chi_{(\varnothing,\mu^*)})\\
& = \chi_{(\lambda,\mu^*)}
\end{align*}
as required.
\end{proof}
\end{proposition}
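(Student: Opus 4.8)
The plan is to compute the pullback $\chi_{(\lambda,\mu)} \circ \eta\phi$ by first reducing to the linear-character computation already carried out in \cite[Theorem 6.3]{twistsrcas}, and then propagating that computation through an induction product. The starting point is the standard formula $\chi_{(\lambda,\mu)} = \ind_{B_a \times B_b}^{B_n}(\tilde\chi_\lambda \boxtimes (\epsilon'_b \otimes \tilde\chi_\mu))$ from \cite[Section 5.5.4]{alma9920315464401631}. Rewriting $\tilde\chi_\lambda$ as $\chi_{(\lambda,\varnothing)}$ and using the identity $\epsilon' \otimes \chi_{(\lambda,\mu)} = \chi_{(\mu,\lambda)}$, I would re-express the induced character as $\ind_{B_a \times B_b}^{B_n}(\chi_{(\lambda,\varnothing)} \boxtimes \chi_{(\varnothing,\mu)})$, so that the two outer tensor factors are characters of $B_a$ and $B_b$ of a form we can handle directly.

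The key structural observation is that precomposition with $\eta\phi = J_1$ commutes with induction from $B_a \times B_b$ up to the analogous precomposition on the factor groups. I would phrase this functorially: $\mathrm{Precomp}_{\eta\phi}$ is an involutive (since $(J_1)^2 = \id$), hence self-adjoint, autofunctor of $\Rep(B_n)$; by Frobenius reciprocity, $\mathrm{Precomp}_{\eta\phi} \circ \ind_{B_a \times B_b}^{B_n}$ is adjoint to $\res_{B_a \times B_b}^{B_n} \circ \mathrm{Precomp}_{\eta\phi}$, and because $\eta\phi$ restricts to an automorphism of each of $\Cc B_a$ and $\Cc B_b$ (it is built from the $J_1(s_i)$ formula, which only involves generators indexed within a block), the latter functor equals $(\mathrm{Precomp}_{\eta\phi|_{B_a}} \boxtimes \mathrm{Precomp}_{\eta\phi|_{B_b}}) \circ \res_{B_a \times B_b}^{B_n}$. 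Taking adjoints back gives $\chi_{(\lambda,\mu)} \circ \eta\phi = \ind_{B_a \times B_b}^{B_n}((\chi_{(\lambda,\varnothing)} \circ \eta\phi|_{B_a}) \boxtimes (\chi_{(\varnothing,\mu)} \circ \eta\phi|_{B_b}))$.

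It then remains to evaluate the two factors. For $\chi_{(\lambda,\varnothing)} \circ \eta\phi|_{B_a}$: since $\chi_{(\lambda,\varnothing)} = \chi_\lambda \circ \pi$ factors through the projection $\pi \colon B_a \to S_a$, and since $\pi \circ \eta\phi = \pi$ (the formula \eqref{j_1_eq} for $J_1(s_i)$ maps to $s_i$ modulo the ideal generated by $t_i - 1$, i.e. $\eta\phi$ is the identity on the symmetric-group quotient), this factor is unchanged. For $\chi_{(\varnothing,\mu)} \circ \eta\phi|_{B_b} = (\epsilon'_b \otimes \chi_{(\mu,\varnothing)}) \circ \eta\phi|_{B_b}$, I would split it as $(\epsilon'_b \circ \eta\phi|_{B_b}) \otimes (\chi_{(\mu,\varnothing)} \circ \eta\phi|_{B_b})$; the second tensor factor is again unchanged by the previous paragraph, while for the first I would check directly on generators that $\epsilon'_b \circ \eta\phi$ sends $t_i \mapsto -1$ and $s_i \mapsto \tfrac12(1 - 1 - 1 - 1) = -1$, hence equals the sign character $\epsilon_b$ of $B_b$ (trivial on $t_i$ would be wrong — it is the one nontrivial on $s_i$). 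Then $\epsilon_b \otimes \chi_{(\mu,\varnothing)} = \chi_{(\varnothing,\mu^*)}$ by \cite[Theorem 5.5.6(c)]{alma9920315464401631} ($\epsilon \otimes \chi_{(\lambda,\mu)} = \chi_{(\mu^*,\lambda^*)}$). Feeding both factors back into the induction formula yields $\ind_{B_a \times B_b}^{B_n}(\chi_{(\lambda,\varnothing)} \boxtimes \chi_{(\varnothing,\mu^*)}) = \chi_{(\lambda,\mu^*)}$, as claimed.

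The main obstacle is the middle step — justifying rigorously that precomposition with $\eta\phi$ passes through induction. The subtlety is that $\eta\phi = J_1$ is an automorphism of $\Cc B_n$ that does not obviously respect the decomposition $\Cc B_n = \bigoplus_{w} w\,\Cc T$ along the Young-subgroup structure, so one cannot naively say "$\ind$ commutes with $\mathrm{Precomp}$". The Frobenius-reciprocity / self-adjointness argument circumvents this: it only needs that $\eta\phi$ preserves the subalgebras $\Cc B_a, \Cc B_b$ and squares to the identity, both of which are immediate from the explicit formula \eqref{j_1_eq}. Once that functorial identity is in hand, the remaining content is the routine generator-level character computations of the previous paragraph.
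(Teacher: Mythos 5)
Your proposal is correct and follows essentially the same route as the paper's own proof: the same induction formula for $\chi_{(\lambda,\mu)}$, the same Frobenius-reciprocity/self-adjointness argument to pass $\mathrm{Precomp}_{\eta\phi}$ through induction, and the same generator-level computations showing $\pi\circ\eta\phi=\pi$ and $\epsilon'_b\circ\eta\phi=\epsilon_b$. No substantive differences to report.
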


\nt The above result shows how $\eta\phi$ gives rise to permutation of the irreducible characters of the Coxeter group of type $B_n$. In the next section we consider the Coxeter groups of type $D_n$, which are isomorphic to the groups $G(2,2,n)$. Since $\frac{m}{p}$ is odd in this case, we cannot pull back the irreducible characters of $G(2,2,n)$ along $\eta\phi$. 
However, it was shown in \cite[proof of Theorem 2.8]{mystic_reflections} that the map $J_c$ in \eqref{j_map} with $c=\mathbf i$ restricts to an isomorphism $\Cc G(m,p,n)\xrightarrow{\sim} \Cc \mu(G(m,p,n))$ when $\frac{m}{p}$ is both even and odd. Since $(J_c)^{-1}=J_{c^{-1}}$, we find $J_{-\mathbf i}$ restricts to an isomorphism $\Cc \mu(G(m,p,n))\xrightarrow{\sim} \Cc G(m,p,n)$. We can therefore ask how precomposition with $J_{-\mathbf i}$ acts on the irreducible characters of both $B_n$ and $D_n$. It follows from the next result that $J_{-\mathbf i}$ permutes the characters of the $B_n$ in exactly the same way as $J_1$ (or $\eta\phi$).
\begin{proposition}\label{j_i_j_1_prop}
$\chi_{(\lambda,\mu)}\circ J_1=\chi_{(\lambda,\mu)}\circ J_{-\mathbf i}$.
\begin{proof}
This follows by almost exactly the same argument as used in the proof of Proposition \ref{permutation_of_characters_b_prop},  with $\eta\phi$ replaced with $J_{-\mathbf i}$. The only part of the proof which does not immediately follow also for $J_{-\mathbf i}$ is the version of equations \eqref{action_on_linear_characters} with $J_{-\mathbf i}$ in place of $\eta\phi$.
But we see these hold too since, for the first equation, $(\pi \circ J_{-\mathbf i})(t)=\pi(t)$ and $(\pi\circ J_{-\mathbf i})(s_i)=\pi(\frac{1}{2}s_i((1-\mathbf i)t_i+(1+\mathbf  i)t_{i+1}))=\frac{1}{2}((1-\mathbf i)s_i+(1+\mathbf i)s_i)=s_i=\pi(s_i)$, so $\pi\circ J_{-\mathbf i}=\pi$ as required. For the second equation: $(\epsilon'_b\circ J_{-\mathbf i})(t_i)=\epsilon'_b(t_i)=-1=\epsilon_b(t_i)\ \forall i=a+1,\dots,n$ and also, $(\epsilon'_b\circ J_{-\mathbf i})(s_i)=\epsilon'_b(\frac{1}{2}s_i((1-\mathbf i)t_i+(1+\mathbf i)t_{i+1}))=-1=\epsilon_b(s_i)\ \forall i=a+1,\dots,n-1$.
\end{proof}
\end{proposition}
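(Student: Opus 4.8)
The plan is to reduce to Proposition~\ref{permutation_of_characters_b_prop}. That result already identifies $\chi_{(\lambda,\mu)}\circ J_1 = \chi_{(\lambda,\mu)}\circ\eta\phi = \chi_{(\lambda,\mu^*)}$, so it suffices to prove the single identity $\chi_{(\lambda,\mu)}\circ J_{-\mathbf i}=\chi_{(\lambda,\mu^*)}$ and then compare the two. I would do this by re-running the proof of Proposition~\ref{permutation_of_characters_b_prop} with $J_{-\mathbf i}$ in place of $\eta\phi=J_1$, and re-checking only those steps that used features peculiar to $J_1$ rather than features shared by $J_{-\mathbf i}$.

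The structural input carries over untouched: the expansion $\chi_{(\lambda,\mu)}=\mathrm{Ind}^{B_n}_{B_a\times B_b}(\chi_{(\lambda,\varnothing)}\boxtimes\chi_{(\varnothing,\mu)})$ and the rule $\epsilon\otimes\chi_{(\lambda,\mu)}=\chi_{(\mu^*,\lambda^*)}$ from \cite{alma9920315464401631} involve no twist at all. The categorical step --- rewriting $\mathrm{Precomp}_{J_{-\mathbf i}}\circ\mathrm{Ind}^{B_n}_{B_a\times B_b}$ as $\mathrm{Ind}^{B_n}_{B_a\times B_b}\circ(\mathrm{Precomp}_{J_{-\mathbf i}|_{B_a}}\boxtimes\mathrm{Precomp}_{J_{-\mathbf i}|_{B_b}})$ by passing to adjoints twice and using that precomposition by a group-algebra automorphism commutes with parabolic restriction --- needs only that $J_{-\mathbf i}$ restricts to an automorphism of $\Cc B_a\subseteq\Cc B_n$ and of $\Cc B_b\subseteq\Cc B_n$, which is immediate from~\eqref{j_minus_i_eq}: $J_{-\mathbf i}$ fixes each $t_i$ and sends each $s_i$ into the span of $s_it_i$ and $s_it_{i+1}$. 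The one point requiring care is that $J_{-\mathbf i}$ is \emph{not} an involution, so the self-adjointness of $\mathrm{Precomp}_{\eta\phi}$ exploited in Proposition~\ref{permutation_of_characters_b_prop} is unavailable; it is replaced by the elementary fact that $\mathrm{Precomp}_g$ and $\mathrm{Precomp}_{g^{-1}}$ are mutually two-sided adjoint, for which one additionally notes that the inverse $J_{\mathbf i}=(J_{-\mathbf i})^{-1}$ likewise preserves $\Cc B_a$ and $\Cc B_b$.

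What then remains is the analogue of~\eqref{action_on_linear_characters}, namely $\chi_{(\lambda,\varnothing)}\circ J_{-\mathbf i}|_{B_a}=\chi_{(\lambda,\varnothing)}$ and $\chi_{(\varnothing,\mu)}\circ J_{-\mathbf i}|_{B_b}=\chi_{(\varnothing,\mu^*)}$. For the first I would verify $\pi\circ J_{-\mathbf i}=\pi$ on generators, where $\pi\colon B_n\to S_n$ is the standard projection: $\pi$ fixes the $t_i$, and $\pi(J_{-\mathbf i}(s_i))=\tfrac12\bigl((1-\mathbf i)+(1+\mathbf i)\bigr)s_i=s_i$, whence $\chi_{(\lambda,\varnothing)}=\chi_\lambda\circ\pi$ gives the identity. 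For the second, write $\chi_{(\varnothing,\mu)}=\epsilon'_b\otimes\chi_{(\mu,\varnothing)}$, so $\chi_{(\varnothing,\mu)}\circ J_{-\mathbf i}|_{B_b}=(\epsilon'_b\circ J_{-\mathbf i}|_{B_b})\otimes(\chi_{(\mu,\varnothing)}\circ J_{-\mathbf i}|_{B_b})$; the second factor is $\chi_{(\mu,\varnothing)}$ by the first identity, while a generator check gives $\epsilon'_b\circ J_{-\mathbf i}|_{B_b}=\epsilon_b$ (with $t_i\mapsto-1$, $s_i\mapsto-1$), since $\epsilon'_b(J_{-\mathbf i}(s_i))=\tfrac12\bigl((1-\mathbf i)(-1)+(1+\mathbf i)(-1)\bigr)=-1$; then $\epsilon_b\otimes\chi_{(\mu,\varnothing)}=\chi_{(\varnothing,\mu^*)}$ by the same rule of \cite{alma9920315464401631} used in Proposition~\ref{permutation_of_characters_b_prop}. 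Assembling gives $\chi_{(\lambda,\mu)}\circ J_{-\mathbf i}=\mathrm{Ind}^{B_n}_{B_a\times B_b}(\chi_{(\lambda,\varnothing)}\boxtimes\chi_{(\varnothing,\mu^*)})=\chi_{(\lambda,\mu^*)}=\chi_{(\lambda,\mu)}\circ J_1$. There is no deep obstacle here: the only genuine subtlety is the failure of $J_{-\mathbf i}$ to be an involution, handled as above when invoking the adjunction; the remaining content is the short evaluations on the generators $s_i,t_i$ via~\eqref{j_minus_i_eq}.
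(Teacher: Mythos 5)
Your proof is correct and follows essentially the same route as the paper's: rerun the argument of Proposition~\ref{permutation_of_characters_b_prop} with $J_{-\mathbf i}$ in place of $\eta\phi$ and verify the analogue of~\eqref{action_on_linear_characters} by the same generator evaluations via~\eqref{j_minus_i_eq}. Your explicit handling of the fact that $J_{-\mathbf i}$ is not an involution --- replacing the self-adjointness of $\text{Precomp}_{\eta\phi}$ by the mutual adjointness of $\text{Precomp}_{J_{-\mathbf i}}$ and $\text{Precomp}_{J_{\mathbf i}}$ --- is a point the paper leaves implicit, and is a worthwhile addition.
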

\nt What is the reason why 
$J_{-\mathbf i}$ acts the same way on irreducible characters of $B_n$ as $J_1=\eta\phi$? 
Using the fact $(J_1)^2=\id$, we can reframe Proposition \ref{j_i_j_1_prop} as saying
\begin{equation}\label{trivial_action_on_characters}
  \chi_{(\lambda,\mu)}\circ (J_{-\mathbf i}\circ J_1)=\chi_{(\lambda,\mu)}
\end{equation}
So the characters of $B_n$ are invariant under $J_{-\mathbf i}\circ J_1$. This means that $J_{-\mathbf i}\circ J_1$ is an inner automorphism of $\Cc G(2,1,n)$:
\begin{lemma}
    Suppose that $G$ is a finite group and $J\colon \Cc G\to \Cc G$ is an
    automorphism of its group algebra such that $\chi\circ J = \chi$ 
    for all irreducible characters $\chi$ of $G$. Then 
    there exists invertible $X\in \Cc G$ such that 
    $J(u)=XuX^{-1}$ for all $u\in \Cc G$.
\end{lemma}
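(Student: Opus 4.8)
The plan is to use the fact that the group algebra $\Cc G$ of a finite group is semisimple, hence by Artin--Wedderburn it decomposes as a product of matrix algebras $\Cc G \cong \prod_{i} \End(V_i)$, one block for each irreducible representation $V_i$ (up to isomorphism). First I would observe that an automorphism $J$ of $\Cc G$ permutes the blocks, because it permutes the minimal central idempotents, which are intrinsically determined by the algebra structure. The hypothesis $\chi\circ J=\chi$ for every irreducible character $\chi$ forces this permutation of blocks to be trivial: precomposing the representation $\rho_i\colon \Cc G\to \End(V_i)$ with $J$ yields a representation with the same character as $\rho_i$, hence (by semisimplicity, characters determine representations up to isomorphism) isomorphic to $\rho_i$, so $J$ maps the $i$-th block to itself.

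Next, having fixed $i$, the map $\rho_i\circ J\colon \Cc G\to \End(V_i)$ is a representation isomorphic to $\rho_i$, so there is an invertible $X_i\in \End(V_i)=\GL(V_i)$ with $\rho_i(J(u)) = X_i\,\rho_i(u)\,X_i^{-1}$ for all $u\in \Cc G$; this is exactly the statement that $J$ acts on the $i$-th matrix block as conjugation by $X_i$. Assembling these, I would set $X := (X_i)_i \in \prod_i \End(V_i)\cong \Cc G$, which is invertible since each $X_i$ is, and conclude $J(u) = XuX^{-1}$ for all $u$, because this identity holds blockwise.

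The main (and essentially only) subtlety is ensuring the blockwise conjugating elements $X_i$ genuinely assemble to an element of $\Cc G$, which is immediate once we view $\Cc G$ via the Wedderburn isomorphism: $X$ is the preimage of $(X_1,\dots,X_r)$. One should also note that characteristic $0$ (or more precisely semisimplicity of $\Cc G$, which holds as $\mathrm{char}\,\Cc = 0$) is what guarantees both that $\Cc G$ is a product of matrix algebras over $\Cc$ and that a representation is determined up to isomorphism by its character. I would not expect any real obstacle beyond keeping the bookkeeping of blocks straight; this is a standard consequence of the Skolem--Noether theorem applied blockwise (every automorphism of a central simple algebra over $\Cc$ is inner), with the character hypothesis serving only to rule out a nontrivial permutation of the isomorphism classes of irreducibles.
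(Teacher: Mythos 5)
Your proposal is correct and follows essentially the same route as the paper: decompose $\Cc G$ into its simple Wedderburn blocks, use the character hypothesis to show $J$ preserves each block (the paper does this via traces of the primitive central idempotents, you via ``characters determine representations up to isomorphism'' --- the same point), and then apply Skolem--Noether blockwise and assemble the conjugating element $X$ from the blockwise intertwiners.
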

\begin{proof}
Note that the condition $\chi\circ J = \chi$ means that for all
finite-dimensional $\Cc G$-modules $V$ and for 
all $u\in \Cc G$, the trace $\tr_V(J(u))$ is equal to the trace 
$\tr_V(u)$.\bb

\nt The centre $Z(\Cc G)$  of $\Cc G$ is spanned by primitive idempotents
$e_\chi$ labelled by irreducible characters $\chi$ of $G$.
As an automorphism, $J$ must permute the set $\{e_\chi\}$. 
Suppose that $J(e_\chi)=e_\psi$, and let $V_\chi$ be a $\Cc G$-module which affords $\chi$. Then 
$\tr_{V_\chi}(e_\psi)=\tr_{V_\chi}(e_\chi)=\dim V_\chi$.
However, if $\psi\ne \chi$ then $e_\psi$ acts on $V_{\chi}$ by zero.
Hence $\psi=\chi$, meaning that $J|_{Z(\Cc G)}$ is the identity map.\bb

\nt But then $J$ fixes each component in the 
decomposition $\Cc G = \prod_\chi e_\chi \Cc G$ of $\Cc G$
into the direct product of central simple algebras.
By the Skolem-Noether theorem, $J|_{e_\chi \Cc G}$
is given by $u\mapsto X_\chi u X_\chi^{-1}$ for all $\chi$.
Put $X=\prod_\chi X_\chi$.
\end{proof}


\nt We can conjecture from this that $J_1$ and $J_{-\mathbf i}$ twist the characters of all the groups $G(m,p,n)$, for $\frac{m}{p}$ even, in the same way.
\begin{conj}\label{twisting_char_gmpn_cor} If $\frac{m}{p}$ is even and $\chi$ is a character of $G(m,p,n)$, then $\chi\circ J_1=\chi\circ J_{-\mathbf i}$.
%
\end{conj}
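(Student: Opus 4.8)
\nt The plan is to deduce the statement from the assertion that the composite $J:=J_{-\mathbf i}\circ J_1$ is an \emph{inner} automorphism of $\Cc G(m,p,n)$. This composite makes sense as an algebra automorphism of $\Cc G(m,p,n)$ precisely because $\frac mp$ is even, which forces $\mu(G(m,p,n))=G(m,p,n)$ so that both $J_1$ and $J_{-\mathbf i}$ are algebra automorphisms of $\Cc G(m,p,n)$. Granting innerness, say $J(u)=XuX^{-1}$ for some invertible $X\in\Cc G(m,p,n)$, then for any finite-dimensional representation $\rho$ of $\Cc G(m,p,n)$ with character $\chi$ one has $\chi(J(u))=\tr\big(\rho(X)\rho(u)\rho(X)^{-1}\big)=\tr\rho(u)=\chi(u)$, i.e.\ $\chi\circ J_{-\mathbf i}\circ J_1=\chi$; precomposing with $J_1$ and using $(J_1)^2=\id$ turns this into $\chi\circ J_{-\mathbf i}=\chi\circ J_1$ for every character $\chi$, as claimed. (This last step also follows from the Lemma above, but it is cleaner to produce $X$ explicitly.)

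\medskip
\nt First I would compute $J$ on a generating set of $G(m,p,n)$, namely $\mathbb S_n$ together with $T(m,p,n)$. Both $J_1$ and $J_{-\mathbf i}$ fix $(\Cc^\times)^n$, and hence $T(m,p,n)$, pointwise, because the product in \eqref{j_map} is empty when $w=1$; so $J$ fixes $T(m,p,n)$ pointwise. On the Coxeter generators, \eqref{j_1_eq}, \eqref{j_minus_i_eq} and a short computation in the commutative subalgebra generated by $t_i,t_{i+1}$ (using $t_i^2=t_{i+1}^2=1$) give
\[
J(s_i)=J_{-\mathbf i}\!\left(\tfrac12\,s_i\,(1+t_i+t_{i+1}-t_it_{i+1})\right)=\tfrac12\,s_i\,(1-\mathbf i\,t_i+\mathbf i\,t_{i+1}+t_it_{i+1}).
\]

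\medskip
\nt Then I would exhibit the conjugator. Since $\frac mp$ is even, $-1$ is an $\frac mp$-th root of unity, so every product $\prod_{j\in S}t_j$ with $S\subseteq\{1,\dots,n\}$ lies in $G(m,p,n)$; hence
\[
X:=\prod_{j=1}^{n}\tfrac12\big((1+\mathbf i^{\,j})+(1-\mathbf i^{\,j})\,t_j\big)\ \in\ \Cc G(m,p,n),
\]
and $X$ is invertible because its $j$-th factor takes the nonzero values $1$ and $\mathbf i^{\,j}$ on the two characters of $\langle t_j\rangle$; moreover $X^{-1}\in\Cc T(2,1,n)\subseteq\Cc G(m,p,n)$. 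As $X$ commutes with every diagonal matrix, conjugation by $X$ fixes $T(m,p,n)$ pointwise, matching $J$. For the $s_i$, one observes that $s_iXs_i^{-1}$ differs from $X$ only in that its $i$-th and $(i+1)$-th factors are interchanged via $t_i\leftrightarrow t_{i+1}$, so $X\,(s_iXs_i^{-1})^{-1}$ lies in $\Cc\langle t_i,t_{i+1}\rangle$ and, evaluated on the four characters of $\langle t_i,t_{i+1}\rangle$, equals exactly $\tfrac12(1+\mathbf i\,t_i-\mathbf i\,t_{i+1}+t_it_{i+1})$; therefore
\[
Xs_iX^{-1}=\tfrac12(1+\mathbf i\,t_i-\mathbf i\,t_{i+1}+t_it_{i+1})\,s_i=\tfrac12\,s_i\,(1-\mathbf i\,t_i+\mathbf i\,t_{i+1}+t_it_{i+1})=J(s_i).
\]
Hence conjugation by $X$ and $J$ agree on a generating set of $G(m,p,n)$, so $J$ is inner and the conjecture follows.

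\medskip
\nt The main work is careful bookkeeping, concentrated in two places. First, one must verify that the candidate $X$ -- which is forced by solving, character by character, the ``coboundary'' equations $X\,(s_iXs_i^{-1})^{-1}=\tfrac12(1+\mathbf i\,t_i-\mathbf i\,t_{i+1}+t_it_{i+1})$, and which in the language of functions on $\{\pm1\}^n$ is the coboundary of $(\epsilon_j)_j\mapsto\prod_j\mathbf i^{\,jb_j}$ with $b_j:=(1-\epsilon_j)/2$ -- genuinely lands in the smaller algebra $\Cc G(m,p,n)$ and is invertible there; this is exactly the step where $\frac mp$ even is used, and it is what makes the $B_n$ computation behind Proposition~\ref{j_i_j_1_prop} propagate uniformly to all $G(m,p,n)$ with $\frac mp$ even. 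Second, one must carry out the two elementary computations in $\Cc\langle t_i,t_{i+1}\rangle$ -- the value of $J(s_i)$ and the value of $Xs_iX^{-1}$ -- which are routine but sign-sensitive. No input from the classification of irreducible characters of $G(m,p,n)$ seems to be needed.
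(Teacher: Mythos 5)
First, a point of order: the paper does not prove this statement --- it appears as Conjecture~\ref{twisting_char_gmpn_cor}. The authors establish only the case $m=2$, $p=1$ (Proposition~\ref{j_i_j_1_prop}) by a character computation specific to $B_n$, and then deduce from the Lemma on inner automorphisms that $J_{-\mathbf i}\circ J_1$ is inner on $\Cc B_n$; the general case is left open. Your argument reverses that logic --- construct the conjugator first, deduce the character identity --- and, as far as I can check, it is correct and actually settles the conjecture. I have verified the two computations you flag as delicate: expanding in $\Cc\langle t_i,t_{i+1}\rangle\cong\Cc^4$ via the four characters gives $J(s_i)=J_{-\mathbf i}(J_1(s_i))=\tfrac12\,s_i(1-\mathbf i\,t_i+\mathbf i\,t_{i+1}+t_it_{i+1})$; and since the $j$-th factor of $X$ takes the value $\mathbf i^{\,jb_j}$ on the character sending $t_j\mapsto(-1)^{b_j}$, the element $X(s_iXs_i^{-1})^{-1}$ takes the value $\mathbf i^{(ib_i+(i+1)b_{i+1})-(ib_{i+1}+(i+1)b_i)}=\mathbf i^{\,b_{i+1}-b_i}$, which agrees with $\tfrac12(1+\mathbf i\,t_i-\mathbf i\,t_{i+1}+t_it_{i+1})$ on all four characters; commuting $s_i$ to the left then yields $Xs_iX^{-1}=J(s_i)$. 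The evenness of $\tfrac mp$ enters exactly where you say: it places $T(2,1,n)$, hence $X^{\pm1}$, inside $\Cc G(m,p,n)$, and it is also what makes $J_1$ (hence $J$) an automorphism of $\Cc G(m,p,n)$ in the first place. Since $G(m,p,n)=\mathbb{S}_n\ltimes T(m,p,n)$ and both $J$ and conjugation by $X$ are algebra endomorphisms fixing $T(m,p,n)$ pointwise and agreeing on $s_1,\dots,s_{n-1}$, they coincide; the trace argument together with $(J_1)^2=\id$ then gives $\chi\circ J_{-\mathbf i}=\chi\circ J_1$. Compared with the paper's $B_n$ argument, yours needs no classification of irreducible characters and is uniform in $(m,p,n)$; you should present it as a proof of the conjecture (subsuming Proposition~\ref{j_i_j_1_prop}) rather than as an alternative proof of the known case.
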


\subsection{Twisting irreducible characters of \texorpdfstring{$D_n$}{}.}\label{twisting_characters_d_sec}
Having addressed the Coxeter group of type $B_n$, we turn to two of its index $2$ normal subgroups, the Coxeter group of type $D_n$ and its mystic counterpart $\mu(D_n)$, given by $G(2,2,n)$ and $\mu(G(2,2,n))$ respectively. We will use the map $J_{-\mathbf i}$ to twist the characters of $D_n$ into those of $\mu(D_n)$. Let us recall how the irreducible characters of these groups relate to those of $B_n$.\bb

\nt The irreducibles of $B_n$ are indexed by bipartitions $(\lambda,\mu)$, and, by \cite[Corollary 6.19]{alma992976864702801631}, the restriction to $D_n$ of an irreducible character of $B_n$ is either irreducible, or the sum of two distinct irreducibles. More specifically, for an irreducible $V_{(\lambda,\mu)}$ of $B_n$, if $\lambda\neq \mu$, then the restriction to $D_n$ remains irreducible. Whereas when $\lambda=\mu$, the restriction is a direct sum of two non-isomorphic irreducibles of $D_n$.\bb 

\nt Now $\mu(D_n)$ is also an index $2$ subgroup of $B_n$, so again by \cite[Corollary 6.19]{alma992976864702801631} the restriction to $\mu(D_n)$ of an irreducible character of $B_n$ will either be an irreducible, or the sum of two distinct irreducibles. In particular, $\mu(D_n)$ is the kernel of the linear character $\epsilon:B_n\rightarrow \Cc^\times,\ t\mapsto -1,s\mapsto -1$, and 
$\epsilon\otimes V_{(\lambda,\mu)}=V_{(\mu^*,\lambda^*)}$. We therefore find that, for $\lambda\neq \mu^*$, the irreducible $V_{(\lambda,\mu)}$ of $B_n$ restricts to an irreducible of $\mu(D_n)$. Whilst for $\lambda=\mu^*$, the restriction decomposes as a direct sum of two non-isomorphic irreducibles of $\mu(D_n)$.\bb

\nt In the following, let $\chi_{(\lambda,\mu)}$ denote the character of the irreducible representation $V_{(\lambda,\mu)}$ of $B_n$. When $\lambda\neq \mu$, let $\chi^{D_n}_{(\lambda,\mu)}$ denote the irreducible character arising from the restriction of $\chi_{(\lambda,\mu)}$ to $D_n$, i.e. $\res^{D_n}(\chi_{(\lambda,\mu)})$. Likewise, if $\lambda\neq \mu^*$, let $\chi^{\mu(D_n)}_{(\lambda,\mu)}:=\res^{\mu(D_n)}(\chi_{(\lambda,\mu)})$.

\begin{proposition}\label{perm_char_d_prop} $J_{-\mathbf i}$ induces a bijection between the irreducible characters of $D_n$ and the irreducible characters of $\mu(D_n)$. In particular, for $\lambda\neq \mu$, $\chi^{D_n}_{(\lambda,\mu)}\mapsto \chi^{\mu(D_n)}_{(\lambda,\mu^*)}$.
\begin{proof}
Precomposition with $J_{-\mathbf i}$ indeed gives a bijection of characters since $J_{-\mathbf i}$ restricts to an isomorphism $\Cc \mu(D_n)\rt \Cc D_n$. For brevity let us identify $J_{-\mathbf i}$ with this restriction. We then have,
$$\chi^D_{(\lambda,\mu)}\circ J_{-\mathbf i}=\res^{D_n}(\chi_{(\lambda,\mu)})\circ J_{-\mathbf i}=\res^{\mu(D_n)}(\chi_{(\lambda,\mu)}\circ J_{-\mathbf i})=\res^{\mu(D_n)}(\chi_{(\lambda,\mu^*)})$$
where the third equality applies Proposition \ref{permutation_of_characters_b_prop} and Proposition \ref{j_i_j_1_prop}. Since $\lambda\neq \mu$, we have $\lambda\neq (\mu^*)^*$, and therefore restricting $\chi_{(\lambda,\mu^*)}$ onto $\mu(D_n)$ gives an irreducible character, in particular $\chi^{\mu(D_n)}_{(\lambda,\mu^*)}$.
\end{proof}
\end{proposition}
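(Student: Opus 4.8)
The plan is to reduce everything to the already-established statement about $B_n$. The starting point is the observation, recorded just before the proposition, that $J_{-\mathbf i}$ restricts to an algebra isomorphism $\Cc\mu(D_n)\xrightarrow{\sim}\Cc D_n$: indeed $D_n=G(2,2,n)$ has $\frac mp=1$, and $J_{\mathbf i}$ restricts to an isomorphism $\Cc G(m,p,n)\xrightarrow{\sim}\Cc\mu(G(m,p,n))$ for every value of $\frac mp$, with inverse $J_{-\mathbf i}$. Precomposition with an algebra isomorphism is an equivalence of module categories preserving irreducibility, so $\chi\mapsto\chi\circ J_{-\mathbf i}$ is automatically a bijection $\mathrm{Irr}(D_n)\to\mathrm{Irr}(\mu(D_n))$; only the explicit formula for $\lambda\neq\mu$ requires an argument.

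For that formula I would use that both $D_n$ and $\mu(D_n)$ sit inside $B_n$, on whose group algebra $J_{-\mathbf i}$ acts as an automorphism (since $\frac mp=2$ is even for $G(2,1,n)$, so $\mu(G(2,1,n))=G(2,1,n)$), and that this automorphism carries $\Cc\mu(D_n)$ onto $\Cc D_n$. Hence for any character $\psi$ of $B_n$ one has the commutation
\[
\res^{D_n}(\psi)\circ J_{-\mathbf i}=\res^{\mu(D_n)}\bigl(\psi\circ J_{-\mathbf i}\bigr),
\]
both sides evaluating $x\in\Cc\mu(D_n)$ to $\psi(J_{-\mathbf i}(x))$. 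Taking $\psi=\chi_{(\lambda,\mu)}$ and applying Proposition~\ref{j_i_j_1_prop} and then Proposition~\ref{permutation_of_characters_b_prop} yields $\psi\circ J_{-\mathbf i}=\psi\circ J_1=\chi_{(\lambda,\mu^*)}$, so that $\chi^{D_n}_{(\lambda,\mu)}\circ J_{-\mathbf i}=\res^{\mu(D_n)}(\chi_{(\lambda,\mu^*)})$. To finish I recognise the right-hand side via the branching rule recalled above the proposition: $\res^{\mu(D_n)}(\chi_{(\alpha,\beta)})$ is irreducible exactly when $\alpha\neq\beta^*$; since $\lambda\neq\mu$ is equivalent to $\lambda\neq(\mu^*)^*$, the restriction $\res^{\mu(D_n)}(\chi_{(\lambda,\mu^*)})$ is irreducible, and by definition this is $\chi^{\mu(D_n)}_{(\lambda,\mu^*)}$.

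The one place requiring care — and the main obstacle, such as it is — is the boundary case $\lambda=\mu$, where $\res^{D_n}\chi_{(\lambda,\lambda)}$ and $\res^{\mu(D_n)}\chi_{(\lambda,\lambda^*)}$ each decompose into two non-isomorphic irreducibles. There one must check that precomposition with $J_{-\mathbf i}$ matches the two summands on the $D_n$ side with the two on the $\mu(D_n)$ side; this follows because an algebra isomorphism preserves isotypic (hence direct-sum) decompositions, although pinning down which summand maps to which would need extra bookkeeping. Since the proposition only asserts the explicit image for $\lambda\neq\mu$, recording this matching is enough to conclude the bijection.
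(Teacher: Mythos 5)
Your proposal is correct and takes essentially the same route as the paper's proof: the bijection comes for free from the restriction of $J_{-\mathbf i}$ to an isomorphism $\Cc\mu(D_n)\to\Cc D_n$, the key computation is the commutation of restriction with precomposition followed by Propositions~\ref{j_i_j_1_prop} and~\ref{permutation_of_characters_b_prop}, and irreducibility of the result follows from $\lambda\neq(\mu^*)^*$. Your extra remarks on the boundary case $\lambda=\mu$ go beyond what the proposition asserts and are not needed, but they do not affect correctness.
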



\section{Rational Cherednik algebras}
Let us recall the result \cite[Theorem 5.2]{twistsrcas}, which showed that certain rational and negative braided Cherednik algebras are related by a Drinfeld twist. For $n\in \N$, let $V$ be an $n$-dimensional $\Cc$-vector space with basis $x_1,\dots,x_n$, and also let $y_1,\dots,y_n$ be the dual basis for $V^*$. For $1\leq i,j,k\leq n$ and $\epsilon\in \Cc^\times$ define the following maps in $\End(V)$:
\begin{equation*}
s_{ij}^{(\epsilon)}(x_k):=
\begin{cases}
x_k, & k\neq i,j,\\
\epsilon^{-1} x_j, & k=i,\\
\epsilon x_i, & k=j.
\end{cases}
\qquad 
t_i^{(\epsilon)}(x_k):=\epsilon^{\delta_{ik}}x_k,
\qquad
\sigma_{ij}^{(\epsilon)}(x_k):=\begin{cases}
x_k & k\neq i,j,\\
\epsilon^{-1} x_j & k=i,\\
-\epsilon x_i & k=j.
\end{cases}
\end{equation*}
Fix parameters $t\in \Cc$ and $c=\{c_1,\ c_\zeta\in \Cc \ |\ \zeta\in  C_{\frac{m}{p}}\backslash \{1\}\}$ and $c'=\{c'_1,\ c'_\zeta\in \Cc\ |\ \zeta\in C_{\frac{m}{p}}\backslash \{1\}\}$.
\begin{definition}\label{rational_cherednik_defn}
\begin{itemize}
  \item The \define{rational Cherednik algebra} $H_{t,c}(G(m,p,n))$ is the $\Cc$-algebra generated by $x_1,\dots,x_n\in V$, $y_1,\dots,y_n\in V^*$, and $g\in G(m,p,n)$, subject to the relations:
  \begin{align*}
  x_ix_j & =x_jx_i & gx_i & =g(x_i)g & y_ix_j - x_jy_i & =c_1 \sum_{\epsilon\in C_m}\epsilon s^{(\epsilon)}_{ij}\\
  y_iy_j & =y_jy_i & gy_i & =g(y_i)g & y_ix_i - x_iy_i & =t-c_1 \sum_{j\neq i}\sum_{\epsilon\in C_m}s^{(\epsilon)}_{ij}-\sum_{\zeta\in C_{\frac{m}{p}}\backslash \{1\}}c_\zeta t_i^{(\zeta)}
  \end{align*}
  for $1\leq i,j\leq n$ with $i\neq j$.

  \item The \define{negative braided Cherednik algebra} $\underline{H}_{t,c'}(\mu(G(m,p,n)))$ is the $\Cc$-algebra generated by $x_1,\dots,x_n\in V$, $y_1,\dots,y_n\in V^*$, and $g\in \mu(G(m,p,n))$, subject to the relations:
  \begin{align*}
  x_ix_j & =-x_jx_i & gx_i & =g(x_i)g & y_ix_j + x_jy_i & =c'_1 \sum_{\epsilon\in C_m}\epsilon \sigma^{(\epsilon)}_{ij}\\
  y_iy_j & =-y_jy_i & gy_i & =g(y_i)g & y_ix_i - x_iy_i& =t+c'_1 \sum_{j\neq i}\sum_{\epsilon\in C_m}\sigma^{(\epsilon)}_{ij}+\sum_{\zeta\in C_{\frac{m}{p}}\backslash \{1\}}c'_\zeta t_i^{(\zeta)}
  \end{align*}
  for $1\leq i,j\leq n$ with $i\neq j$.
\end{itemize}
\end{definition}

\nt Let $T:=\langle t_1,\dots,t_n\ |\ t_it_j=t_jt_i,\ t_i^2=1\rangle$, isomorphic to the $n$-fold direct product of the cyclic group of order $2$. By \cite[Proposition 5.1]{twistsrcas}, for $m$ even, $H_c(G(m,p,n))$ is a $\Cc T$-module algebra under the following action:
$$t_i\rhd g=t_ig t_i,\hspace{1em}t_i\rhd x_j=(-1)^{\delta_{ij}}x_j,\hspace{1em}t_i\rhd y_j=(-1)^{\delta_{ij}}y_j$$
for $1\leq i,j\leq n$ and $g\in G(m,p,n)$. Also recall from \cite[Lemma 4.5]{twistsrcas} that the following is a counital $2$-cocycle of $\Cc T$,
\begin{equation}\label{cocycle_f}
\F=\F^{-1}=\prod_{1\leq j< i\leq n}f_{ij}\text{, where }f_{ij}=\frac{1}{2}(1\otimes 1 +\gamma_i\otimes 1+1\otimes \gamma_j-\gamma_i\otimes \gamma_j)
\end{equation} 
and by \cite[Theorem 5.2]{twistsrcas}, we have $\underline{H}_{\underline{c}}(\mu(G(m,p,n)))\xrt{\sim} H_{c'}(G(m,p,n))_\F$ where $c'_1=-\underline{c}_1$ and $c'_\zeta = -\underline{c}_\zeta\ \forall \zeta\in C_{\frac{m}{p}}\backslash \{1\}$. Below in Theorem \ref{embedding_thm} we will denote the Drinfeld twist $H_{c'}(G(m,p,n))_\F$ by $(H_{c'}(G(m,p,n)),\star)$, where $\star$ is the twisted product of $H_{c'}(G(m,p,n))$.\bb

\nt Note that when $\frac{m}{p}$ is even, we have a natural embedding $u:\Cc T\rightarrow H_c(G(m,p,n))$, and the action of $\Cc T$ on $H_c(G(m,p,n))$ is seen to be adjoint with respect to this embedding. Therefore, by Proposition \ref{twist_isom_prop}(2), $H_{c'}(G(m,p,n))\cong H_{c'}(G(m,p,n))_\F$, and the module categories of these algebras must therefore be equivalent. In Theorem \ref{embedding_thm} we compute the isomorphism $H_{c'}(G(m,p,n))\cong H_{c'}(G(m,p,n))_\F$ explicitly, and compose it with the isomorphism of \cite[Theorem 5.2]{twistsrcas} in order to establish an explicit isomorphism between the negative braided Cherednik algebra $\underline{H}_{\underline{c}}(\mu(G(m,p,n)))$ and the rational Cherednik algebra $H_c(G(m,p,n))$. Note this isomorphism becomes an embedding when $\frac{m}{p}$ is odd.

\subsection{Application to Cherednik algebras.}\label{application_to_rcas_sec}

\begin{theorem}\label{embedding_thm}
Consider the negative braided Cherednik algebra $\underline{H}_{\underline{c}}(\mu(G(m,p,n))$ 
where $m$ is even and $n\geq 2$. Define $p'$, $c_1$ and $c_\zeta$ as follows,
\begin{align*}
p' =\begin{cases} p & \text{if } \frac mp\text{ is even,} \\
\frac p2& \text{if } \frac mp\text{ is odd,}
\end{cases} & \hspace{2em} 
c_1 = -\underline{c}_1, \hspace{2em}
c_\zeta =\begin{cases}-\underline{c}_\zeta & \text{if } \zeta \in C_{\frac{m}{p}}\setminus\{1\}\\ 0 & \text{if } \zeta\in C_{\frac{m}{p'}}\setminus C_{\frac{m}{p}}.\end{cases}
\end{align*}
Then $\underline{H}_{\underline{c}}(\mu(G(m,p,n))$ 
embeds inside the rational Cherednik algebra $H_c(G(m,p',n))$ via the following mapping of generators:
\begin{align*}
\underline{x}_i & \mapsto x_i t_{i-1}t_{i-2}\dots t_1,\\
\underline{y}_i & \mapsto y_i t_{i-1}t_{i-2}\dots t_1,\\
\sigma_i & \mapsto \frac{1}{2}(s_i+\bar s_i+\sigma_i-\sigma^{-1}_i)\\
t & \mapsto t\ \forall t\in T(m,p,n)
\end{align*}

\begin{proof}
Using the isomorphism $\phi$ from \cite[Theorem 5.2]{twistsrcas} we have an isomorphism $\underline{H}_{\underline{c}}(\mu(G(m,p,n)))\xrt{\sim} (H_{c'}(G(m,p,n)),\star)$ where $c'_1=-\underline{c}_1$ and $c'_\zeta = -\underline{c}_\zeta\ \forall \zeta\in C_{\frac{m}{p}}\backslash \{1\}$. Note that the rational Cherednik algebra $H_{c'}(G(m,p,n))$ is a subalgebra of $H_c(G(m,\frac{p}{2},n))$, with $c_1=c'_1$ and 
$$c_\zeta:=\begin{cases}c'_\zeta & \text{if } \zeta \in C_{\frac{m}{p}}\\ 0 & \text{if } \zeta\in C_{\frac{2m}{p}}\backslash C_{\frac{m}{p}}\end{cases}$$
Additionally for $u:\Cc T\rt H_c(G(m,\frac{p}{2},n))$ taken to be the natural embedding, $\Cc T$ acts adjointly (in the sense of Definition \ref{adjoint_action}) on $H_c(G(m,\frac{p}{2},n))$. On restricting to the subalgebra $H_{c'}(G(m,p,n))$ this action coincides with the action given in \cite[Proposition 5.1]{twistsrcas} which is used for twisting $H_{c'}(G(m,p,n))$. Therefore the twisted algebra $(H_{c'}(G(m,p,n),\star)$ is a subalgebra of twisted algebra $(H_c(G(m,\frac{p}{2},n)),\star)$.\bb

\nt Using the fact that the action of $\Cc T$ on $H_c(G(m,\frac{p}{2},n))$ is adjoint, we can apply Proposition \ref{twist_isom_prop}(1) to find $(H_c(G(m,\frac{p}{2},n)),\star)\cong H_c(G(m,\frac{p}{2},n))$. We deduce that $\underline{H}_{\underline{c}}(\mu(G(m,p,n))$ is isomorphic to a subalgebra of $H_c(G(m,\frac{p}{2},n))$ as required.\bb

\nt Next we inspect how the generators of $\underline{H}_{\underline{c}}(\mu(G(m,p,n))$ are mapped into $H_c(G(m,\frac{p}{2},n))$. Recall that under $\phi$, $\sigma_i\mapsto \bar s_i,\ t\mapsto t,\ \underline{x}_i\mapsto x_i,\ \underline{y}_i\mapsto y_i$. It remains to show how the map $\eta$ defined in Proposition \ref{twist_isom_prop} maps $\bar s_i, t, x_i$ and $y_i$. Let us use the cocycle given in \eqref{cocycle_f}. Let $\eta_{f_{ij}}(a):=\sum_{k,l=0}^1 (-1)^{kl}(\gamma_i^k\rhd a)u(\gamma_j^l)$. Then, using the commutativity of $\Cc T$, we see $\eta(a)=\eta_{f_{2,1}}\circ \eta_{f_{3,1}}\circ \dots \circ \eta_{f_{n,n-1}}$, i.e. the composition (in any order) of $\eta_{f_{ij}}$ for $1\leq j<i\leq n$. Now $u(\gamma_i)=t_i$, and it is easy to verify that 
$$\eta_{f_{ij}}(x_k)=\begin{cases}x_k & \text{if } i\neq k\\
x_k t_j & \text{if }i=k\end{cases}$$
Therefore, 
\begin{align*}
\eta(x_k) & =\eta_{f_{k,1}}\circ \eta_{f_{k,2}}\circ \dots \eta_{f_{k,k-1}}(x_k)\\
 & =\eta_{f_{k,1}}\circ \eta_{f_{k,2}}\circ \dots \eta_{f_{k,k-2}}(x_k t_{k-1})\\
 & \dots\\
 & = x_k t_{k-1} \dots t_1
\end{align*}
One similarly shows that $\eta(y_k)=y_k t_{k-1}\dots t_1$. Also, since $\Cc T(m,\frac{p}{2},n)$ is commutative, the adjoint action of the subalgebra $\Cc T$ on $\Cc T(m,\frac{p}{2},n)$ is trivial, so $\eta_{f_{ij}}(t)=\frac{1}{2}(t\cdot 1+t\cdot 1 +t \cdot t_j-t\cdot t_j)=t$ and $\eta(t)=t$. Finally we check $\eta(\bar s_i)$. Using $p_j+q_j=1$ we find
$$\eta_{f_{ij}}(\bar s_k)=\bar s_k p_j+(\gamma_i\rhd \bar s_k)q_j=\begin{cases}\bar s_k & \text{if } i\neq k,k+1\\
\bar s_k p_j+s_k q_j & \text{if } i=k,k+1\end{cases}$$
Additionally, 
$$\eta_{f_{ij}}(s_k)=\begin{cases}s_k & \text{if } i\neq k,k+1\\
s_k p_j+\bar s_k q_j & \text{if } i=k,k+1
\end{cases}$$
So $(\eta_{f_{k+1,j}}\circ \eta_{f_{k,j}})(\bar s_k)=\bar s_k$ (using the fact $p_j$ and $q_j$ are orthogonal idempotents). Therefore $\eta(\bar s_k)=\eta_{f_{k+1,k}}(\bar s_k)=\bar s_k p_k+s_k q_k=\frac{1}{2}(s_k + \bar s_k +\sigma_k-\sigma^{-1}_k)$.
\end{proof}
\end{theorem}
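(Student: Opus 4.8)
The plan is to realise the embedding as a composite $\eta\circ\phi$ of two maps, one already constructed in \cite{twistsrcas} and one supplied by Proposition~\ref{twist_isom_prop}(2): $\phi$ is the isomorphism of \cite[Theorem 5.2]{twistsrcas} onto a Drinfeld twist of a rational Cherednik algebra, and $\eta$ is the \emph{untwisting} isomorphism which becomes available once the relevant $\Cc T$-action is identified as adjoint.

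First I would invoke \cite[Theorem 5.2]{twistsrcas} for the isomorphism $\phi\colon\underline{H}_{\underline{c}}(\mu(G(m,p,n)))\xrightarrow{\sim}(H_{c'}(G(m,p,n)),\star)$ onto the twist of $H_{c'}(G(m,p,n))$ by the cocycle $\F$ of~\eqref{cocycle_f}, where $c'_1=-\underline{c}_1$, $c'_\zeta=-\underline{c}_\zeta$, and under which $\underline{x}_i\mapsto x_i$, $\underline{y}_i\mapsto y_i$, $\sigma_i\mapsto\bar s_i$, $t\mapsto t$. Next I would embed $H_{c'}(G(m,p,n))$ as a subalgebra of $H_c(G(m,p',n))$, extending the parameter list by zeros on the classes in $C_{m/p'}\setminus C_{m/p}$ (a vacuous step when $m/p$ is even, where $p'=p$). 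The crucial point is that $-1$ is an $(m/p')$-th root of unity in both parity cases, so $T(2,1,n)$ is a genuine subgroup of $G(m,p',n)$; hence the $\Cc T$-action used for twisting is the restriction of the \emph{adjoint} action of $\Cc T$ on $H_c(G(m,p',n))$ along the natural embedding $u\colon\Cc T\to H_c(G(m,p',n))$, the twisted product $\star$ on $H_{c'}$ is the restriction of that on $H_c(G(m,p',n))$, and Proposition~\ref{twist_isom_prop}(1)--(2) yields the algebra isomorphism $\eta\colon H_c(G(m,p',n))_{\F}\xrightarrow{\sim}H_c(G(m,p',n))$, $a\mapsto(f'\rhd a)u(f'')$, of~\eqref{eq:isom_twist}. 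Restricting $\eta$ along $(H_{c'},\star)\subseteq H_c(G(m,p',n))_{\F}$ and composing with $\phi$ gives the embedding, and the parameter translation $c_1=-\underline{c}_1$, $c_\zeta=-\underline{c}_\zeta$ on $C_{m/p}\setminus\{1\}$, $c_\zeta=0$ on $C_{m/p'}\setminus C_{m/p}$, comes out exactly as in the statement.

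It then remains only to evaluate $\eta$ on $x_i$, $y_i$, $\bar s_i$ and on $T(m,p,n)$. I would use the factorisation $\F=\prod_{j<i}f_{ij}$ with $f_{ij}=\frac12(1\otimes1+\gamma_i\otimes1+1\otimes\gamma_j-\gamma_i\otimes\gamma_j)$, together with commutativity of $\Cc T$, to write $\eta$ as a composite (in any order) of the maps $\eta_{f_{ij}}$ attached to the individual tensor factors $f_{ij}$, and compute each $\eta_{f_{ij}}$ on a single generator directly: since $u(\gamma_i)=t_i$, it fixes $x_k,y_k$ for $k\ne i$ and right-multiplies $x_i,y_i$ by $t_j$; it fixes $T(m,p,n)$, since the $\Cc T$-action on the commutative subalgebra $\Cc T(m,p',n)$ is inner and hence trivial; and, writing $p_j=\frac12(1+t_j)$ and $q_j=\frac12(1-t_j)$, it sends $\bar s_k\mapsto\bar s_k p_j+s_k q_j$ and $s_k\mapsto s_k p_j+\bar s_k q_j$ when $k\in\{i,i+1\}$, fixing both otherwise. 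Composing over $j<i$ gives $\eta(x_k)=x_k t_{k-1}\cdots t_1$, $\eta(y_k)=y_k t_{k-1}\cdots t_1$, $\eta(t)=t$; and for the reflections the successive pairs $\eta_{f_{k+1,j}}\circ\eta_{f_{k,j}}$ with $j<k$ collapse because $p_j$ and $q_j$ are orthogonal idempotents, so that only $\eta_{f_{k+1,k}}$ survives and $\eta(\bar s_k)=\bar s_k p_k+s_k q_k=\frac12(s_k+\bar s_k+\sigma_k-\sigma_k^{-1})$. Reading off $\eta\circ\phi$ on generators then produces the four formulas in the statement.

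The step I expect to be the main obstacle is this last computation on the reflections. In contrast to $x_i,y_i$, which are $\gamma_i$-eigenvectors and merely accumulate $t_j$-factors, the elements $s_k$ and $\bar s_k$ are interchanged by conjugation by $t_k$ \emph{and} by $t_{k+1}$, so each $\eta_{f_{ij}}$ with $i\in\{k,k+1\}$ genuinely mixes them; the delicate point is to order the composite so that the contributions of $f_{k,j}$ and $f_{k+1,j}$ for each fixed $j<k$ annihilate one another (using $p_jq_j=0$ and $p_j+q_j=1$), isolating the single nontrivial factor $f_{k+1,k}$. A secondary, more routine matter is the bookkeeping of the target algebra $H_c(G(m,p',n))$ and its parameters, which is where the parity of $m/p$, the value of $p'$, and the verification that $T(2,1,n)\subseteq G(m,p',n)$ (needed for Proposition~\ref{twist_isom_prop} to apply) all enter.
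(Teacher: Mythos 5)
Your proposal is correct and follows essentially the same route as the paper: factor the embedding as $\eta\circ\phi$ with $\phi$ from \cite[Theorem 5.2]{twistsrcas}, pass to $H_c(G(m,p',n))$ where the $\Cc T$-action becomes adjoint, apply Proposition~\ref{twist_isom_prop} to get the untwisting map $\eta$, and evaluate $\eta$ factor-by-factor via the $\eta_{f_{ij}}$, with the same idempotent cancellation isolating $f_{k+1,k}$ in the computation of $\eta(\bar s_k)$. If anything, your uniform use of $p'$ for both parities is slightly cleaner than the paper's, which writes $G(m,\tfrac p2,n)$ throughout the body of the proof.
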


\nt Since the embedding map of Theorem \ref{embedding_thm} is an isomorphism when $\frac{m}{p}$ is even, we deduce:

\begin{cor} For $\frac{m}{p}$ even, the category of modules over the negative braided Cherednik algebra $\underline{H}_{\underline{c}}(\mu(G(m,p,n))$ is equivalent to the category of modules over the rational Cherednik algebra $H_c(G(m,p,n))$.
\end{cor}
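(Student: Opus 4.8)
The plan is to read this off from Theorem~\ref{embedding_thm}: once one knows that the embedding constructed there is an \emph{isomorphism} when $\frac mp$ is even, the corollary is just the elementary fact that isomorphic algebras have equivalent module categories.

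First I would specialise the setup of Theorem~\ref{embedding_thm} to the case $\frac mp$ even. Then $p' = p$, so the target $H_c(G(m,p',n))$ is exactly $H_c(G(m,p,n))$, and the second branch in the definition of $c_\zeta$ (the indices $\zeta \in C_{m/p'}\setminus C_{m/p}$) is vacuous, leaving the parameters $c_1 = -\underline c_1$ and $c_\zeta = -\underline c_\zeta$ for $\zeta\in C_{m/p}\setminus\{1\}$. So the map of Theorem~\ref{embedding_thm} is an algebra homomorphism $\underline H_{\underline c}(\mu(G(m,p,n))) \to H_c(G(m,p,n))$. Next I would note it is bijective: by the proof of Theorem~\ref{embedding_thm} it equals the composite $\eta\phi$, where $\phi$ is the isomorphism of \cite[Theorem~5.2]{twistsrcas} and $\eta$ is the map of Proposition~\ref{twist_isom_prop}(2). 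The map $\phi$ is an isomorphism; and $\eta$ is an isomorphism because, for $\frac mp$ even, the inclusion $T = T(2,1,n)\subseteq G(m,p,n)$ provides a natural algebra embedding $u\colon \Cc T \hookrightarrow H_c(G(m,p,n))$ through which the $\Cc T$-action is adjoint, so Proposition~\ref{twist_isom_prop}(2) applies and yields the isomorphism $\eta\colon H_c(G(m,p,n))_{\mathcal F}\xrightarrow{\sim} H_c(G(m,p,n))$. Hence $\eta\phi$ is an isomorphism of $\Cc$-algebras.

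Finally I would invoke the general principle that an isomorphism of associative unital algebras $\Theta\colon A \xrightarrow{\sim} B$ induces an equivalence of categories $B\Mod \xrightarrow{\sim} A\Mod$ by restriction of scalars along $\Theta$, with quasi-inverse the restriction along $\Theta^{-1}$ (one checks these are mutually inverse on objects and on morphisms directly from $\Theta\Theta^{-1} = \mathrm{id}$, $\Theta^{-1}\Theta = \mathrm{id}$). Applying this with $A = \underline H_{\underline c}(\mu(G(m,p,n)))$, $B = H_c(G(m,p,n))$ and $\Theta = \eta\phi$ gives the asserted equivalence of module categories.

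I do not anticipate a genuine obstacle here: everything of substance is already packaged in Theorem~\ref{embedding_thm} and Proposition~\ref{twist_isom_prop}. The only point that deserves an explicit sentence is why the embedding of Theorem~\ref{embedding_thm} is surjective when $\frac mp$ is even, which is immediate from the fact that both constituent maps $\phi$ and $\eta$ are bijective in that regime. (One could alternatively phrase the whole argument through Proposition~\ref{prop:isom_semidirect_products} and Proposition~\ref{twist_isom_prop}(1), deducing the equivalence $H_c(G(m,p,n))_{\mathcal F}\Mod \simeq H_c(G(m,p,n))\Mod$ first and then transporting along $\phi$, but composing the isomorphisms into $\eta\phi$ as above is the most direct route.)
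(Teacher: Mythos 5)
Your proposal is correct and matches the paper's argument exactly: the corollary is deduced from the fact that the embedding of Theorem~\ref{embedding_thm} becomes an isomorphism $\eta\phi$ when $\frac mp$ is even (with $p'=p$ and the extra parameters vacuous), and isomorphic algebras have equivalent module categories. The details you supply — that $\eta$ is an isomorphism via Proposition~\ref{twist_isom_prop}(2) because the $\Cc T$-action is adjoint through the embedding $T(2,1,n)\subseteq G(m,p,n)$ — are precisely the ones the paper records in the discussion preceding the theorem.
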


\subsection{Twisting standard modules of rational Cherednik algebras.}
Standard modules for rational Cherednik algebras are modelled after induced modules introduced by Verma in \cite{verma_1966} and further studied by Bernstein, Gelfand and Gelfand in \cite{BGG_1971}. Here we define a natural analogue of standard modules for negative braided Cherednik algebras, and show that the twist (in the sense of  \eqref{eq:def_twist}) of a standard module of a rational Cherednik algebra is precisely one of these standard modules for the corresponding negative braided Cherednik algebra.\bb

\nt For $W$ an irreducible complex reflection group with reflection representation $V$, let $\HH=H_c(W)$ be the associated rational Cherednik algebra (with $t=1$).

\begin{definition}\label{standard_module} Let $\tau$ be a simple $\Cc W$-module, and extend to a $S(V)\rtimes W$-module in which $V$ acts by zero, i.e. for $p\in S(V),v\in \tau$, $p\rhd v:=p(0)v$. Then the \define{standard module} associated to $\tau$ is the $H_c(W)$-module $M_c(\tau):=H_c(W)\otimes_{S(V)\rtimes W}\tau$.
\end{definition}

\nt In the following we suppose $m$ is even, and let $W=G(m,p,n)$ and $\mu(W)=\mu(G(m,p,n))$. Let $\underline{\HH}=\underline{H}_{\underline c}(\mu(W))$ be the negative braided Cherednik algebra isomorphic to the twist of $\HH$ under the map $\phi$ from \cite[Theorem 5.2]{twistsrcas}. Note that $\underline{\HH}$ contains the subalgebra $S_{-1}(V)\rtimes \mu(W)$, and $\phi:\underline{\HH}\xrightarrow{\sim} \HH_\F$ restricts to an isomorphism $S_{-1}(V)\rtimes \mu(W)\xrt{\sim} (S(V)\rtimes W)_\F$. We have also have the following natural analogue of Definition \ref{standard_module}: for $\tau'$ a simple $\Cc \mu(W)$-module, extend to a $S_{-1}(V)\rtimes \mu(W)$-module by letting $V$ act on $\tau'$ by $0$. Then define the \define{standard $\underline{\HH}$-module} as $\underline{M}_{\underline c}(\tau'):=\underline{\HH}\otimes_{S_{-1}(V)\rtimes \mu(W)}\tau'$.\bb

\nt In the following we will prove several cases where the standard $\underline{\HH}$-module is (up to an isomorphism of categories) isomorphic to the Giaquinto and Zhang twist of the standard $\HH$-module, $M_c(\tau)_\F$. To prove this we first require the following elementary result.\bb

\nt Suppose $\phi:B\xrt{\sim}B'$ is an algebra isomorphism, $A$ is a subalgebra of $B$, and $A'=\phi(A)$. $\phi$ defines an isomorphism of categories $B'\Mod\xrt{\sim} B\Mod$, whereby a $B'$-module $(V,\ \rhd:B'\otimes V\rt V)$ is sent to the $B$-module $(V^\phi:=V,\ \rhd^\phi:=\rhd\circ (\phi\otimes \id_V))$. Note that the map $\phi|_A:A\xrt{\sim} A'$ also defines an isomorphism of categories $A'\Mod\xrt{\sim}A\Mod$. The image of an $A'$-module $(V,\ \rhd:A'\otimes V\rt V)$ under this functor will be similarly denoted $(V^\phi:=V,\ \rhd^\phi=\rhd \circ (\phi|_A\otimes \id_V))$.
\begin{lemma}\label{ind_isom_lem} If $V$ is an $A'$-module, then $(\ind_{A'}^{B'}(V))^\phi\cong \ind_A^B(V^\phi)$ as $B$-modules.
\begin{proof} We already proved a special case of this in the proof of Proposition \ref{permutation_of_characters_b_prop}. Note the functor $\text{Precomp}_{\phi}$ has an adjoint given by $\text{Precomp}_{\phi^{-1}}$. Also $\ind^B_A$ and $\ind^{B'}_{A'}$ are adjoint to $\res^B_A$ and $\res^{B'}_{A'}$ respectively. Therefore $\text{Precomp}_{\phi}\circ \ind^{B'}_{A'}$ has adjoint $\res^{B'}_{A'}\circ \text{Precomp}_{\phi^{-1}}$, which is easily seen to be equal to $\text{Precomp}_{\phi^{-1}|_{A'}}\circ \res^B_A$. The adjoint of this is then $\ind^B_A\circ \text{Precomp}_{\phi|_A}$, as required.
\end{proof}
\end{lemma}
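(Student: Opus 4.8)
The plan is to use the abstract nonsense of adjoint functors, exactly mirroring the argument that appeared in the proof of Proposition \ref{permutation_of_characters_b_prop}; indeed the statement is designed precisely to axiomatise that argument. First I would set up the four functors in play: the induction functors $\ind_A^B$ and $\ind_{A'}^{B'}$, their right adjoints $\res^B_A$ and $\res^{B'}_{A'}$ (adjointness of induction and restriction for subalgebras is the classical fact recalled earlier from \cite{higman_1955, rieffel_1975}), and the pullback autoequivalences $\text{Precomp}_\phi\colon B'\Mod \to B\Mod$ and $\text{Precomp}_{\phi|_A}\colon A'\Mod \to A\Mod$ (with $V \mapsto V^\phi$ in the notation fixed just before the lemma). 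The key observation is that each $\text{Precomp}$ functor is an isomorphism of categories, hence both a left and a right adjoint of its inverse $\text{Precomp}_{\phi^{-1}}$, resp.\ $\text{Precomp}_{\phi^{-1}|_{A'}}$.

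The crux is the compatibility identity
$$
\res^{B'}_{A'} \circ \text{Precomp}_{\phi^{-1}}
\;=\;
\text{Precomp}_{\phi^{-1}|_{A'}} \circ \res^B_A,
$$
which holds because restriction of a module just forgets part of the action and $\phi^{-1}$ restricted to $A$ is $\phi^{-1}|_{A'}$ read the other way — concretely, a $B$-module $W$ pulled back along $\phi^{-1}$ and then restricted to $A'$ has underlying space $W$ with $A'$ acting via $a' \mapsto \phi^{-1}(a')\rhd_W(-)$, which is the same as first restricting $W$ to $A$ and then pulling back along $\phi^{-1}|_{A'}\colon A' \to A$. Once this is in hand, I take adjoints twice. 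Starting from $\text{Precomp}_\phi \circ \ind^{B'}_{A'}$, its right adjoint is $\res^{B'}_{A'} \circ \text{Precomp}_{\phi^{-1}}$ (the adjoint of a composite is the composite of adjoints in reverse order, using that $\text{Precomp}_{\phi^{-1}}$ is right adjoint to $\text{Precomp}_\phi$ and $\res^{B'}_{A'}$ is right adjoint to $\ind^{B'}_{A'}$). By the compatibility identity this equals $\text{Precomp}_{\phi^{-1}|_{A'}} \circ \res^B_A$, whose right adjoint — again composing adjoints in reverse — is $\ind^B_A \circ \text{Precomp}_{\phi|_A}$. Since adjoints are unique up to natural isomorphism, we conclude $\text{Precomp}_\phi \circ \ind^{B'}_{A'} \cong \ind^B_A \circ \text{Precomp}_{\phi|_A}$ as functors, and evaluating at $V$ gives the desired isomorphism $(\ind_{A'}^{B'}(V))^\phi \cong \ind_A^B(V^\phi)$ of $B$-modules.

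I do not expect any serious obstacle here; the only point requiring a line of genuine verification is the compatibility identity $\res^{B'}_{A'} \circ \text{Precomp}_{\phi^{-1}} = \text{Precomp}_{\phi^{-1}|_{A'}} \circ \res^B_A$, and even that is a one-line unwinding of definitions on underlying vector spaces. The mild subtlety to state carefully is that $\text{Precomp}$ functors, being isomorphisms of categories, are adjoint on both sides to their inverses, so that taking "the right adjoint" of a composite built from an $\ind$ and a $\text{Precomp}$ is legitimate and produces exactly the composite written above; this is precisely the bookkeeping already carried out in the proof of Proposition \ref{permutation_of_characters_b_prop}, so I would simply refer back to it rather than repeat it in full.
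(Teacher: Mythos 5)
Your proposal is correct and follows essentially the same route as the paper's proof: identify the adjoint pairs, use the compatibility of restriction with the precomposition functors, and take adjoints twice. The only difference is that you spell out the verification of $\res^{B'}_{A'}\circ \text{Precomp}_{\phi^{-1}} = \text{Precomp}_{\phi^{-1}|_{A'}}\circ \res^B_A$, which the paper leaves as ``easily seen''.
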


\nt Recall the map $J_{-\mathbf i}$ of \eqref{j_minus_i_eq} restricts to an isomorphism $\Cc \mu(W)\xrightarrow{\sim} \Cc W$ (regardless of the parity of $\frac{m}{p}$). This map therefore induces an isomorphism of categories $\Cc W\Mod \xrightarrow{\sim} \Cc \mu(W)\Mod, \tau\mapsto \underline{\tau}$, where $\underline{\tau}=\tau$ as a $\Cc$-vector space, and if $\rhd:\Cc W\otimes \tau\rightarrow \tau$ is the action of $\Cc W$ on $\tau$, then the action of $\Cc \mu(W)$ on $\underline{\tau}$ is given by
$$\rhd\circ (J_{-\mathbf i}\otimes \id_{\underline\tau}):\Cc \mu(W)\otimes \underline\tau\rightarrow \underline\tau$$
In the above notation, $\underline{\tau}=\tau^{J_{-\mathbf i}}$. Similarly, the map $\phi:\underline{\HH}\xrightarrow{\sim} \HH_\F$ defines an isomorphism of categories $\HH_\F\Mod\xrt{\sim}\underline{\HH}\Mod$, and below we take $M_c(\tau)_\F^\phi$ to denote the image of $M_c(\tau)_\F$ under this functor.

\begin{proposition} 
Suppose that $\tau$ is either a simple $G(2,1,n)$-module, or a simple $G(2,2,n)$-module corresponding to a bipartition $(\lambda,\mu)$ of $n$ where $\lambda\neq \mu$ (see Section \ref{twisting_characters_d_sec}). Then $M_c(\tau)_\F^\phi \cong \underline{M}_{\underline{c}}(\underline \tau)$ as $\underline{\HH}$-modules.
\begin{proof}
In following we will denote $G(2,1,n)$ by $B$, $G(2,2,n)$ by $D$, and $\mu(G(2,2,n))$ by $\mu(D)$. Also let $W$ denote an arbitrary group $G(m,p,n)$, where $m$ is even. Note $M_c(\tau)=\text{Ind}_{S(V)\rtimes W}^\HH(\tau)$ and $\underline{M}_{\underline{c}}(\underline \tau)=\ind_{S_{-1}(V)\rtimes \mu(W)}^{\underline{\HH}}(\underline{\tau})$. \bb

\nt We wish to apply Proposition \ref{twisting_induced_rep_prop} to the Giaquinto-Zhang twist $M_c(\tau)_\F=(\text{Ind}_{S(V)\rtimes W}^\HH(\tau))_\F$. To apply the proposition we first must note that $\HH=S(V^*)\cdot (S(V)\rtimes W)$ is indeed an algebra factorisation in $\Cc T\Mod$, where recall $T$ is the group isomorphic to $(C_2)^n$. We also require $\tau \in (\Cc T,S(V)\rtimes W)\Mod$, which we check next. Note this category is well-defined since $S(V)\rtimes W$ is a $\Cc T$-module algebra under the adjoint action of $\Cc T$ (in particular it is a $\Cc T$-submodule algebra of $\HH$). When $\frac{m}{p}$ is even, then $\Cc T$ is a subalgebra of $\Cc W$, and therefore $\tau$ is naturally a $\Cc T$-module by restricting the $S(V)\rtimes W$ action to $\Cc T$. This action can equivalently be seen as pulling back the action of $S(V)\rtimes W$ along the embedding map $\Cc T\hookrightarrow S(V)\rtimes W$, and therefore we can apply Proposition \ref{twist_isom_prop}(3) to deduce $\tau \in (\Cc T,S(V)\rtimes W)\Mod$.\bb

\nt However $T$ does not embed in $D=G(2,2,n)$. By assumption though, $\tau$ is equal to the simple $\Cc D$-module $\tau^D_{(\lambda,\mu)}$, where $\lambda\neq \mu$. On recalling Section \ref{twisting_characters_d_sec}, this module is the restriction to $D$ of either of the simple $\Cc B$-modules $\tau^B_{(\lambda,\mu)}$ or $\tau^B_{(\mu,\lambda)}$. Since $\Cc T$ is a subalgebra of $\Cc B$, each of these $\Cc B$-modules defines an action of $\Cc T$ on $\tau$.   Note that on extending $\tau$ to one these $\Cc B$-modules, we can apply the $\frac{m}{p}$ even case above to deduce that $\tau\in (\Cc T, S(V)\rtimes B)\Mod$. But then the compatibility condition \eqref{eq:compatibility} will still hold if we restrict the action of $S(V)\rtimes B$ on $\tau$ to $S(V)\rtimes D$, and therefore $\tau\in (\Cc T, S(V)\rtimes D)\Mod$ as required. Later we show that this proposition holds independently of which action is chosen.\bb

\nt We can now apply Proposition \ref{twisting_induced_rep_prop} to deduce:
$$M_c(\tau)_\F\cong \text{Ind}_{(S(V)\rtimes W)_\F}^{\HH_\F}(\tau_\F)$$

\nt Here $\tau_\F$ is the Giaquinto and Zhang twist of $\tau$ (see \eqref{eq:def_twist}), so the action of $(S(V)\rtimes W)_\F$ on $\tau_\F$ is given by $\rhd_\F:=\rhd\circ (\F^{-1}\blacktriangleright\ -\ )$, where $\rhd$ denotes the action of $S(V)\rtimes W$ on $\tau$, and $\blacktriangleright$ denotes the action $\Cc T\otimes \Cc T$ on $(S(V)\rtimes W)\otimes \tau$. We now apply Lemma \ref{ind_isom_lem} to find,
$$M_c(\tau)^\phi_\F\cong (\text{Ind}_{(S(V)\rtimes W)_\F}^{\HH_\F}(\tau_\F))^\phi\cong \ind^{\underline{\HH}}_{S_{-1}(V)\rtimes \mu(W)}(\tau_\F^\phi)$$
where $\tau^\phi_\F=\tau$ as a $\Cc$-vector space, and has action of $S_{-1}(V)\rtimes \mu(W)$ given by $\rhd^\phi_\F=\rhd_\F \circ (\phi\otimes \id_\tau)$.\bb

\nt To complete the proof it remains to check $\tau^\phi_\F=\underline{\tau}$ as $S_{-1}(V)\rtimes \mu(W)$-modules. We consider the cases of $\frac{m}{p}$ even, and $G(2,2,n)$, separately.\bb

\nt Let $\frac{m}{p}$ be even. The action of $\Cc T$ on $\tau$ is just a restriction of the action of $S(V)\rtimes W$ on $\tau$, and the action of $\Cc T$ on $S(V)\rtimes W$ is adjoint, so we can apply Proposition \ref{twist_isom_prop}(3). We deduce that the action $\rhd_\F$ of $\tau_\F$ is given by the pulling back $\rhd$ along the map $\eta$, i.e. $\rhd_\F=\rhd\circ (\eta\otimes \id_\tau)$, where $\eta:(S(V)\rtimes W)_\F\xrt{\sim} S(V)\rtimes W$ was defined in the proof of Theorem \ref{embedding_thm}. Then $\rhd^\phi_\F=\rhd \circ (\eta\phi\otimes \id_\tau)$, where $\eta\phi$ is regarded here as a map $S_{-1}(V)\rtimes \mu(W)\xrt{\sim} S(V)\rtimes W$.\bb

\nt Note that the map $\eta\phi$ is degree-preserving, and, by definition, $\rhd$ is such that elements of degree $\geq 1$ in $S(V)\rtimes W$ act on $\tau$ by $0$. Therefore $\rhd^\phi_\F$ is determined by how it behaves on the subalgebra $\Cc \mu(W)$ of $S_{-1}(V)\rtimes \mu(W)$. Recall that the restriction of the map $\eta\phi$ to the subalgebra $\Cc\mu(W)$ coincides precisely with the map $J_1$, given in \eqref{j_1_eq}. Also, in Proposition \ref{j_i_j_1_prop} it was proven that $\chi\circ J_1=\chi\circ J_{-\mathbf i}$, for $\chi$ an irreducible character of $B_n$. Therefore, as actions of $\Cc\mu(W)$ on $\tau$, we have $\rhd\circ (J_1\otimes \id_\tau)=\rhd\circ (J_{-\mathbf i}\otimes \id_\tau)$. Hence, as $\Cc \mu(W)$-modules, $\tau^\phi_\F =\underline{\tau}$. This implies they are also equal as $S_{-1}(V)\rtimes \mu(W)$-modules, since $V$ acts by $0$ for both $\tau^\phi_\F$ and $\underline{\tau}$. This proves the result for the group $B=G(2,1,n)$.\bb 

\nt Let us now check $\tau^\phi_\F=\underline{\tau}$, as $S_{-1}(V)\rtimes \mu(D)$-modules, when $\tau=\tau^D_{(\lambda,\mu)}$ for $\lambda\neq \mu$. As mentioned before, $\tau$ can be extended to either of the simple $\Cc B$-modules $\tau^B_{(\lambda,\mu)}$ or $\tau^B_{(\mu,\lambda)}$. Suppose, for now, that we extend $\tau$ to $\tau^B_{(\lambda,\mu)}$. Let us consider $(\tau^B_{(\lambda,\mu)})^\phi_\F$. From the $\frac{m}{p}$ even case above we know that the action of $\Cc B$ on $(\tau^B_{(\lambda,\mu)})^\phi_\F$ is given by $\rhd \circ (\eta\phi|_{\Cc B}\otimes \id_\tau)=\rhd\circ (J_1\otimes \id_\tau)$. Therefore by Proposition \ref{permutation_of_characters_b_prop}, $(\tau^B_{(\lambda,\mu)})^\phi_\F=\tau^B_{(\lambda,\mu^*)}$. Since $\lambda\neq (\mu^*)^*$, restricting the $\Cc B$-action on $\tau^B_{(\lambda,\mu^*)}$ to $\Cc\mu(D)$ gives the irreducible representation $\tau^{\mu(D)}_{(\lambda,\mu^*)}$. Let us now inspect $\underline{\tau}$, which is defined to be $(\tau^D_{(\lambda,\mu)})^{J_{-\mathbf i}}$. By Proposition \ref{perm_char_d_prop}, $(\tau^D_{(\lambda,\mu)})^{J_{-\mathbf i}}=\tau^{\mu(D)}_{(\lambda,\mu^*)}$. Therefore $\tau^\phi_\F=\underline{\tau}$ as $\Cc \mu(D)$-modules, and so also as $S_{-1}(V)\rtimes \mu(D)$-modules, since each of these $\Cc \mu(D)$-modules is extended by taking $V$ to act by $0$.\bb 

\nt Finally suppose we had instead extended $\tau^D_{(\lambda,\mu)}$ to $\tau^B_{(\mu,\lambda)}$. Then on applying Proposition \ref{permutation_of_characters_b_prop} we would have found $(\tau^B_{(\mu,\lambda)})^\phi_\F=\tau^B_{(\mu,\lambda^*)}$. Upon restricting this to $\mu(D)$ we arrive at the irreducible representation $\tau^{\mu(D)}_{(\mu,\lambda^*)}$. But we see that $\tau^{\mu(D)}_{(\mu,\lambda^*)}=\tau^{\mu(D)}_{(\lambda,\mu^*)}$ since by \cite[Theorem 5.5.6(c)]{alma9920315464401631}, $\epsilon \otimes \tau^B_{(\lambda,\mu)}=\tau^B_{(\mu^*,\lambda^*)}$, where $\epsilon$ is the linear character which satisfies $\mu(D)=\ker(\epsilon)$.

\end{proof}

\end{proposition}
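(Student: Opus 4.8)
The plan is to realise both standard modules as modules induced from the $S(V)\rtimes W$ subalgebra and then to push $M_c(\tau)$ through the two operations ``twist by $\F$'' and ``pull back along $\phi$'', using the functorial machinery of Section~\ref{sec:general_twists}. Explicitly, $M_c(\tau) = \ind_{S(V)\rtimes W}^{\HH}(\tau)$ and $\underline{M}_{\underline{c}}(\underline{\tau}) = \ind_{S_{-1}(V)\rtimes\mu(W)}^{\underline{\HH}}(\underline{\tau})$, and the PBW theorem for $\HH$ gives the algebra factorisation $\HH = S(V^*)\cdot(S(V)\rtimes W)$, which lives in $\Cc T\Mod$ since all three subalgebras are stable under the adjoint $\Cc T$-action of \cite[Proposition 5.1]{twistsrcas}.

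First I would check that $\tau$, extended to an $S(V)\rtimes W$-module on which $V$ acts by $0$, lifts to $(\Cc T, S(V)\rtimes W)\Mod$. For $\frac{m}{p}$ even, $\Cc T\subseteq\Cc W$, so $\tau$ carries a $\Cc T$-action by restriction; this action is the pullback of the $S(V)\rtimes W$-action along $\Cc T\hookrightarrow S(V)\rtimes W$, and since that embedding exhibits the $\Cc T$-action on $S(V)\rtimes W$ as adjoint, Proposition~\ref{twist_isom_prop}(3) yields the compatibility~\eqref{eq:compatibility}. For $W=G(2,2,n)$ the subgroup $T$ does not embed, so instead I would first extend $\tau=\tau^D_{(\lambda,\mu)}$ to one of the simple $\Cc B$-modules $\tau^B_{(\lambda,\mu)}$, $\tau^B_{(\mu,\lambda)}$ (each restricting to $\tau$ because $\lambda\neq\mu$), apply the even case to get a $(\Cc T, S(V)\rtimes B)$-structure, and observe that~\eqref{eq:compatibility} survives restriction of the $S(V)\rtimes B$-action to $S(V)\rtimes D$.

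Next I would invoke Proposition~\ref{twisting_induced_rep_prop} to obtain $M_c(\tau)_\F \cong \ind_{(S(V)\rtimes W)_\F}^{\HH_\F}(\tau_\F)$, and then Lemma~\ref{ind_isom_lem}, with the isomorphism $\phi$ restricting to $S_{-1}(V)\rtimes\mu(W)\xrightarrow{\sim}(S(V)\rtimes W)_\F$, to obtain $M_c(\tau)_\F^\phi \cong \ind_{S_{-1}(V)\rtimes\mu(W)}^{\underline{\HH}}(\tau_\F^\phi)$. The claim then reduces to $\tau_\F^\phi\cong\underline{\tau}$ as $S_{-1}(V)\rtimes\mu(W)$-modules, and since $V$ acts by $0$ on both it suffices to match them as $\Cc\mu(W)$-modules. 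For $\frac{m}{p}$ even, Proposition~\ref{twist_isom_prop}(3) presents the twisted action as the pullback along $\eta$, so $\tau_\F^\phi$ is the pullback of $\tau$ along $\eta\phi$; as $\eta\phi$ is degree-preserving this is controlled by $\eta\phi|_{\Cc\mu(W)}=J_1$ from~\eqref{j_1_eq}, and Proposition~\ref{j_i_j_1_prop} ($\chi\circ J_1=\chi\circ J_{-\mathbf i}$ on $B_n$) gives $\tau_\F^\phi=\tau^{J_{-\mathbf i}}=\underline{\tau}$. For $W=G(2,2,n)$ with $\lambda\neq\mu$ I would run the same computation on the chosen $\Cc B$-extension: Proposition~\ref{permutation_of_characters_b_prop} gives $(\tau^B_{(\lambda,\mu)})_\F^\phi=\tau^B_{(\lambda,\mu^*)}$, whose restriction to $\mu(D)$ is the irreducible $\tau^{\mu(D)}_{(\lambda,\mu^*)}$ (using $\lambda\neq(\mu^*)^*=\mu$), while Proposition~\ref{perm_char_d_prop} gives $\underline{\tau}=(\tau^D_{(\lambda,\mu)})^{J_{-\mathbf i}}=\tau^{\mu(D)}_{(\lambda,\mu^*)}$; the other extension $\tau^B_{(\mu,\lambda)}$ yields $\tau^{\mu(D)}_{(\mu,\lambda^*)}$, which equals $\tau^{\mu(D)}_{(\lambda,\mu^*)}$ since $\epsilon\otimes\tau^B_{(\lambda,\mu)}=\tau^B_{(\mu^*,\lambda^*)}$ and $\mu(D)=\ker\epsilon$, so the answer does not depend on the choice.

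I expect the main obstacle to be the verification that $\tau$ genuinely lies in $(\Cc T, S(V)\rtimes W)\Mod$ when $W=G(2,2,n)$, together with the associated independence-of-choice point: since $T\not\subseteq D$ one must import the $\Cc T$-structure from an ambient $\Cc B$-module, confirm that compatibility~\eqref{eq:compatibility} persists under restriction $S(V)\rtimes B\to S(V)\rtimes D$, and check that the isomorphism type of $\tau_\F^\phi$ is the same whichever of $\tau^B_{(\lambda,\mu)}$, $\tau^B_{(\mu,\lambda)}$ was used. Both points rest on the character identities of Section~\ref{twisting_characters_sec}, in particular $\chi\circ J_1=\chi\circ J_{-\mathbf i}$ and the behaviour of $\epsilon\otimes(-)$ on bipartition labels.
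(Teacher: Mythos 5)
Your proposal is correct and follows essentially the same route as the paper's proof: the same reduction via Proposition~\ref{twisting_induced_rep_prop} and Lemma~\ref{ind_isom_lem} to the identification $\tau_\F^\phi\cong\underline{\tau}$, the same use of Proposition~\ref{twist_isom_prop}(3) and $\eta\phi|_{\Cc\mu(W)}=J_1$ together with Proposition~\ref{j_i_j_1_prop} in the $\frac mp$ even case, and the same treatment of $G(2,2,n)$ by extending to a $\Cc B$-module and checking independence of the choice via $\epsilon\otimes(-)$. No gaps to report.
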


\section{Twisted coinvariant algebras}
\label{sec:twists_coinv_alg}

\subsection{Classical (commutative) coinvariant algebras.}

Let $V$ be a finite-dimensional vector space over
a field $\Kk$. Each 
finite subgroup $G$ of $\GL(V)$ acts on the symmetric $\Kk$-algebra $S(V)$ of $V$. Classically, invariant theory of finite groups studies the graded algebra $S(V)^G$ of $G$-invariants. Of particular interest is the case 
when $G$ is a finite reflection group, that is, when 
$\{s\in G: \rank(\id_V-s)=1\}$ generates $G$. 
In the non-modular case (when 
the characteristic of $\Kk$ does not divide $|G|$),
the work of Chevalley, Sheppard and Todd, and Serre in the 1950s
established that $G$ is a reflection group, if and only if 
$S(V)^G$ is a polynomial algebra. 
This is known as the Chevalley-Shephard-Todd theorem, see for example \cite[Theorem 7.1.4]{NeuselSmith2002}.\bb

\nt The \define{coinvariant algebra} $S_G$ is defined as the quotient algebra $S(V)/I_G$ where $I_G$ is the ideal of $S(V)$ generated by
homogeneous $G$-invariants of positive degree.
%
The following result was initially proved by Chevalley \cite{Chevalley1955} under more restrictive assumptions, but holds for all non-modular reflection groups, see \cite[Theorem 7.2.1]{NeuselSmith2002}. See also \cite{Broer2011} for a discussion of the modular case.
\begin{theorem}\label{thm:chevalley}
Supposing $k$ is an arbitrary field and $G$ is a finite reflection group generated by reflections whose orders are prime to the characteristic of $k$. Then the coinvariant algebra, viewed as a $G$-module, affords the regular representation of $G$. 
\end{theorem}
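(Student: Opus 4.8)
The plan is to run the classical graded-character (Molien) argument, taking the Chevalley--Shephard--Todd theorem --- invoked just before the statement --- as input. Write $n=\dim V$ and $R=S(V)^G$. By Chevalley--Shephard--Todd in the non-modular case, $R=k[f_1,\dots,f_n]$ is a polynomial algebra on homogeneous generators of degrees $d_1,\dots,d_n$ with $\prod_i d_i=|G|$, and $S(V)$ is a free graded $R$-module. Graded Nakayama then shows that a homogeneous $k$-basis of $S_G=S(V)/R_+S(V)$ lifts to a homogeneous $R$-basis of $S(V)$; in particular $\dim_k S_G=|G|$ (equivalently, by evaluating the Hilbert series $\prod_i \frac{1-t^{d_i}}{1-t}$ of $S_G$ at $t=1$).

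First I would upgrade this to a $G$-equivariant statement. The grading-preserving surjection $S(V)\twoheadrightarrow S_G$ is a map of graded $kG$-modules, and in each fixed degree it splits because $kG$ is semisimple (Maschke, as $|G|$ is invertible in $k$); assembling the degreewise splittings gives a graded $G$-equivariant section $s\colon S_G\to S(V)$. Multiplication $R\otimes_k S_G\to S(V)$, $r\otimes \bar f\mapsto r\cdot s(\bar f)$, is then $G$-equivariant (with $G$ acting trivially on $R$) and grading-preserving, and it is bijective by a Hilbert-series comparison of two graded free $R$-modules under a surjection. Hence $S(V)\cong R\otimes_k S_G$ as graded $G$-modules, so for every $g\in G$ the graded trace factors as $\Tr(g\mid S(V))(t)=\Tr(g\mid R)(t)\cdot\Tr(g\mid S_G)(t)$.

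Now I would compute the two known factors. Molien's formula gives $\sum_{j\ge0}\Tr(g\mid S^jV)\,t^j=\det_V(1-tg)^{-1}$: since $|G|$ is invertible, $g$ is semisimple and this follows by diagonalising over an extension field. Since $G$ acts trivially on $R$, its graded trace is just the Hilbert series $\prod_i(1-t^{d_i})^{-1}$. Therefore
\[
\sum_{j\ge0}\Tr(g\mid (S_G)_j)\,t^j=\frac{\prod_{i=1}^n(1-t^{d_i})}{\det_V(1-tg)}.
\]
The left-hand side is a polynomial, so we may evaluate at $t=1$. The numerator vanishes there to order exactly $n$ (each $d_i$ is prime to the characteristic, as $d_i\mid|G|$), while $\det_V(1-tg)$ vanishes to order $\dim V^g$, which is $<n$ unless $g=1$ because $G\hookrightarrow\GL(V)$ is faithful. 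Hence the value at $t=1$ is $0$ for $g\neq1$, and for $g=1$ it is $\prod_i d_i=|G|$. So the character of $S_G$ as a $G$-module is $g\mapsto|G|\,\delta_{g,1}$, the character of the regular representation $kG$; since $kG$ is semisimple and a module is determined up to isomorphism by its character in the non-modular case (the characters of non-isomorphic simple $kG$-modules are linearly independent, e.g.\ via Brauer-character theory, as every element of $G$ is regular), we conclude $S_G\cong kG$ as $G$-modules.

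The main obstacle is that every step past the Chevalley--Shephard--Todd input uses that $|G|$ is invertible in $k$ --- Maschke's theorem, Molien's formula, and linear independence of simple characters all require it --- so the argument as sketched proves the statement in the non-modular case. To reach the stated generality, where merely the orders of the generating reflections are prime to $\operatorname{char}k$, one must replace these averaging steps by the sharper module-theoretic analysis of \cite{NeuselSmith2002}; isolating and carrying out that refinement (or first reducing to the non-modular case) is the delicate point.
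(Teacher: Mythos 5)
The paper offers no proof of Theorem~\ref{thm:chevalley}: it is imported from the literature (Chevalley for the classical case, \cite{NeuselSmith2002} for non-modular reflection groups, \cite{Broer2011} for the modular refinement), so there is no ``paper's argument'' to compare against. Your Molien-series argument is the standard proof and is essentially complete in characteristic $0$ and, more generally, whenever $|G|$ is invertible in $k$ --- which is in fact the only case this paper ever uses, since all later applications are over $\Cc$ or subfields of $\Cc$. The factorisation $S(V)\cong R\otimes_k S_G$ via Maschke and graded Nakayama, the evaluation of the graded trace at $t=1$, and the observation that each $d_i$ divides $|G|$ are all correct.

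There are, however, two genuine gaps relative to the statement as written. First, a subtle one in your final step: in characteristic $p>0$ with $p\nmid|G|$, a semisimple $kG$-module is \emph{not} determined by its $k$-valued character --- linear independence of the simple characters only pins down the multiplicities modulo $p$ (e.g.\ for $G=C_2$, $p=3$, the modules $S_1^{\oplus 4}$ and $S_1\oplus S_2^{\oplus 3}$ have equal dimension and equal $k$-valued character but are not isomorphic). The correct fix is the one you gesture at parenthetically but do not actually implement: run the entire computation with Brauer characters, i.e.\ lift the eigenvalues of each $g$ (roots of unity of order prime to $p$) to characteristic zero before forming $\det_V(1-tg)$; the resulting integer-valued multiplicities then genuinely identify $S_G$ with $kG$. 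Second, as you candidly acknowledge, the theorem is stated under the weaker hypothesis that only the \emph{orders of the generating reflections} are prime to $\operatorname{char}k$ (so e.g.\ $S_n$ in characteristic $3$ is allowed), and every averaging step in your argument --- Maschke, the equivariant splitting, the freeness input itself --- breaks down there. That case requires the different, non-averaging arguments of \cite{Broer2011}, and flagging the obstacle is not the same as overcoming it; so the proposal proves a strictly weaker statement than the one in the paper, albeit the only one the paper actually relies on.
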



\subsection{The noncommutative coinvariant algebra of a mystic reflection group.}

Let $\Kk$ be a field of characteristic $0$. An extension of the Chevalley-Shephard-Todd theorem to skew-polynomial $\Kk$-algebras $S_{\mathbf q}(V)$ is 
given by Kirkman, Kuzmanovich and Zhang in \cite{Kirkman2008ShephardToddChevalleyTF}. It characterises mystic reflection groups as finite subgroups $W\subset \GL(V)$ which act by algebra automorphisms on $S_{\mathbf q}(V)$ such that the subalgebra $S_{\mathbf q}(V)^W$ is again skew-polynomial. 
Set $\Kk=\Cc$; it turns out that the ``building blocks'' of mystic reflection groups are
\begin{itemize}
\item the usual complex reflection groups, which act on the commutative polynomial algebra $S(V)$,
\item the family $\mu(G(m,p,n))$ of Definition~\ref{gmpn_defn} where $m$ is even, which act on the skew-polynomial algebra $S_{-1}(V)$.
\end{itemize}
Recall that 
the group $\mu(G(m,p,n))$ is defined via its faithful action on a space $V$ with a fixed basis $x_1,\dots,x_n$. 
We regard the graded vector space
$$
S = \bigoplus_{(k_1,\dots,k_n)\in \mathbb{Z}_{\ge 0}^n}
\Cc x_1^{k_1}\dots x_n^{k_n},
$$
spanned by the \define{standard monomials},
as the underlying space for both $S(V)$ and $S_{-1}(V)$. The space 
$S$ contains 
\begin{equation}\label{eq:invariant_generators}
p_k^{(m)} = \sum_{i=1}^n x_i^{km}, \quad k=1,\dots,n-1; \qquad r^{(\frac mp)}=x_1^{\frac mp} x_2^{\frac mp}  \dots x_n^{\frac mp}, 
\end{equation}
which are homogeneous algebraically independent generators,
in $S(V)$, of the subalgebra of $G(m,p,n)$-invariants, 
see \cite[Chapter 2, Section 8]{alma9930780234401631}.
The subalgebra of $\mu(G(m,p,n))$-invariants in $S_{-1}(V)$ happens to form a commutative algebra isomorphic to $S(V)^{G(m,p,n)}$, as noted in \cite{Kirkman2008ShephardToddChevalleyTF}.
\begin{theorem}[\cite{mystic_reflections}, Theorem 2.6]
Let $W=\mu(G(m,p,n))$. Then $S_{-1}(V)^W$ is a commutative polynomial algebra, freely generated by $p_1^{(m)},\dots,p_{n-1}^{(m)}$ and $r^{(\frac mp)}$.
\end{theorem}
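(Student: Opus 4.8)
The plan is to identify $S_{-1}(V)^W$ as a polynomial algebra in two steps: first produce a polynomial subalgebra $R$ generated by the listed elements, then show $R$ exhausts the invariants by comparing Hilbert series with the classical invariant theory of $G(m,p,n)$ acting on the \emph{commutative} ring $S(V)$, where $S(V)^{G(m,p,n)}$ is freely generated by the same $p_1^{(m)},\dots,p_{n-1}^{(m)},r^{(\frac mp)}$, see \cite{alma9930780234401631}. One works throughout inside the common graded space $S$ spanned by standard monomials, on which $G(m,p,n)$, $\mu(G(m,p,n))$ and the diagonal subgroup $T(m,p,n)$ all act by monomial matrices; here $m$ is even, and $\mu(G(m,p,n))$ really does act on $S_{-1}(V)$ by graded algebra automorphisms because permuting and rescaling the variables preserves the relations $x_ix_j=-x_jx_i$.

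First I would check the listed elements lie in $S_{-1}(V)^W$. For $p_k^{(m)}=\sum_i x_i^{km}$ this is immediate: if $g\in W$ satisfies $g(x_i)=a_ix_{\pi(i)}$ with $a_i\in C_m$, then $x_i^{km}\mapsto a_i^{km}x_{\pi(i)}^{km}=x_{\pi(i)}^{km}$, and single-variable powers involve no reordering sign. For $r^{(\frac mp)}=x_1^{m/p}\cdots x_n^{m/p}$, applying $g$ yields $\big(\prod_i a_i^{m/p}\big)$ times $x_{\pi(1)}^{m/p}\cdots x_{\pi(n)}^{m/p}$, and reordering this word into standard form costs the sign $\operatorname{sgn}(\pi)^{m/p}$ (all blocks have length $m/p$); since $\det g=\operatorname{sgn}(\pi)\prod_i a_i\in C_{m/p}$ one has $\prod_i a_i^{m/p}=\operatorname{sgn}(\pi)^{m/p}$, and the two factors multiply to $\operatorname{sgn}(\pi)^{2m/p}=1$, so $r^{(\frac mp)}$ is fixed. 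Next I would check these $n$ elements pairwise commute in $S_{-1}(V)$: any monomial in a product of two distinct generators reorders without sign because each $p_k^{(m)}$ is a sum of monomials with even exponents (using $m$ even), and $r^{(\frac mp)}$ commutes with itself trivially; so they generate a commutative subalgebra $R\subseteq S_{-1}(V)^W$. Finally I would establish algebraic independence in $S_{-1}(V)$: expanding $\prod_k(p_k^{(m)})^{e_k}(r^{(\frac mp)})^f$ by first forming $(r^{(\frac mp)})^f$, which is a sign $\varepsilon_{n,f}$ times a single standard monomial, and then multiplying by $\prod_k(p_k^{(m)})^{e_k}$, whose even exponents incur no further sign, one sees this equals $\varepsilon_{n,f}$ times the standard-monomial expansion of the corresponding product computed in $S(V)$; since those products are linearly independent in $S(V)^{G(m,p,n)}$ by the classical result, any relation among the generators in $S_{-1}(V)$ is trivial. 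Hence $R$ is a polynomial algebra on $p_1^{(m)},\dots,p_{n-1}^{(m)},r^{(\frac mp)}$, with the same graded dimensions as $S(V)^{G(m,p,n)}$.

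For the reverse inclusion $S_{-1}(V)^W\subseteq R$ it is enough to prove $\dim_\Cc\big(S_{-1}(V)^W\big)_d=\dim_\Cc\big(S(V)^{G(m,p,n)}\big)_d$ in every degree $d$. Put $T'=T(m,p,n)$; it is a normal subgroup of both $W=\mu(G(m,p,n))$ and $G(m,p,n)$ with quotient $S_n$ in either case, so $S_{-1}(V)^W=(S_{-1}(V)^{T'})^{S_n}$ and $S(V)^{G(m,p,n)}=(S(V)^{T'})^{S_n}$. As $T'$ is diagonal it acts by identical scalars whether one multiplies in $S(V)$ or in $S_{-1}(V)$, so $S_{-1}(V)^{T'}$ and $S(V)^{T'}$ are the same graded subspace of $S$, namely the span of the standard monomials all of whose exponents are congruent mod $m$ to a common residue divisible by $\frac mp$. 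On this common space the two actions of $S_n$ differ in general, but I claim they have the same character, hence the same $S_n$-invariant dimensions. This is the step needing a genuine computation: a permutation $\pi$ lifts to $G(m,p,n)$ as a permutation matrix, and to $\mu(G(m,p,n))$ likewise when $\operatorname{sgn}(\pi)\in C_{m/p}$ but otherwise (which happens only when $\frac mp$ is odd and $\pi$ is odd) with one entry negated, as forced by the determinant condition; evaluating the action on a $\pi$-invariant standard monomial $x^{\mathbf{k}}$ in the span above, with common residue $c$, the reordering sign is $(-1)^{c\,\operatorname{inv}(\pi)}$ (writing $\operatorname{inv}(\pi)$ for the number of inversions) and the scalar contributed by a negated entry is $(-1)^{c}$, and one checks case by case that these are trivial or cancel, so the coefficient of $x^{\mathbf{k}}$ in $g\cdot x^{\mathbf{k}}$ is always $1$; thus the $S_n$-character of $S_{-1}(V)^{T'}$ counts $\pi$-invariant monomials exactly as for $S(V)^{T'}$, and the desired dimension equality follows.

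The main obstacle is precisely this character comparison: the crux is that the sign picked up when the anticommuting variables pass one another is exactly compensated by the $-1$ that the determinant condition forces into lifts of odd permutations to $\mu(G(m,p,n))$. Granting that cancellation, $S_{-1}(V)^W$ and $S(V)^{G(m,p,n)}=\Cc[p_1^{(m)},\dots,p_{n-1}^{(m)},r^{(\frac mp)}]$ have equal Hilbert series; since $R$ is already a polynomial subalgebra of $S_{-1}(V)^W$ with that Hilbert series, $R=S_{-1}(V)^W$ in every degree, which is the theorem.
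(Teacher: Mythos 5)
Your argument is correct. Note that the paper does not prove this statement at all: it is imported verbatim from \cite{mystic_reflections} (Theorem 2.6), so there is no in-paper proof to compare against. Your proof is a sound self-contained substitute. The individual steps all check out: the invariance of $r^{(\frac mp)}$ does follow from $(\det g)^{m/p}=1$ cancelling the block-reordering sign $\mathrm{sgn}(\pi)^{m/p}$; the $p_k^{(m)}$ are in fact central in $S_{-1}(V)$ because $km$ is even, which gives both commutativity of $R$ and the sign-bookkeeping needed for algebraic independence; and the reduction $S_{-1}(V)^W=(S_{-1}(V)^{T'})^{S_n}$ with $T'=T(m,p,n)$ normal of index $n!$ in both groups is valid, with $S_{-1}(V)^{T'}$ and $S(V)^{T'}$ literally the same subspace of $S$ since diagonal matrices never reorder variables. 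The crux you flag --- that on a $\pi$-fixed monomial with all exponents $\equiv c \pmod 2$ the reordering sign $(-1)^{c\,\mathrm{inv}(\pi)}$ is cancelled by the $(-1)^c$ forced on lifts of odd permutations when $\frac mp$ is odd (and is already trivial when $\pi$ is even or $\frac mp$ is even, since then $c$ is even) --- is exactly right, and it is worth writing out the three cases explicitly rather than saying ``one checks case by case''. With that done, the two $S_n$-characters on each graded piece agree, the Hilbert series coincide, and the inclusion $R\subseteq S_{-1}(V)^W$ of graded spaces with equal finite dimensions forces equality. The only presentational improvement I would suggest is to record the independence of the character computation from the choice of lift (lifts of $\pi$ differ by elements of $T'$, which acts trivially on $S_{-1}(V)^{T'}$), which you use implicitly.
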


\nt Given that the mystic reflection groups $\mu(G(m,p,n))$ have polynomial invariants in $S_{-1}(V)$, we define their coinvariant algebras and obtain the noncommutative analogue of Chevalley's classical result.\bb

\nt In the following, take $G=G(m,p,n)$ and $W=\mu(G(m,p,n))$.
%

\begin{definition}
Let $I_{W}$ be the two-sided ideal of the algebra $S_{-1}(V)$ generated
by homogeneous $W$-invariants of positive degree. The \define{noncommutative coinvariant algebra}
of $W$ is the quotient algebra $\underline S_{W} = S_{-1}(V)/I_W$.  
\end{definition}
\nt It is clear from the definition that the ideal $I_W$ is a $W$-submodule of $S_{-1}(V)$,
and so $W$ acts on $\underline S_W$ by algebra automorphisms --- the action descends from $S_{-1}(V)$.

\begin{theorem}\label{thm:nccoinv}
The noncommutative coinvariant algebra $\underline{S}_W$ of the mystic reflection group $W=\mu(G(m,p,n))$ affords the regular representation of $W$.
\end{theorem}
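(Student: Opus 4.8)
The plan is to realise $\underline S_W$, together with its $W$-action, as a cocycle twist of the classical coinvariant algebra $S_G$ with its $G$-action, and then to invoke Chevalley's Theorem~\ref{thm:chevalley}. The engine is this: $S_G$ affords the regular representation of $G$, the twist will identify the $W$-module $\underline S_W$ with the pullback of the $G$-module $S_G$ along an algebra isomorphism $\Cc W\to\Cc G$, and the pullback of the (free, rank-one) regular module along an algebra isomorphism is again free of rank one, hence regular. Throughout I write $G=G(m,p,n)$, $W=\mu(G)$, let $T$ be the elementary abelian $2$-group acting on $S$ by sign changes, and use the $2$-cocycle $\F$ of \eqref{cocycle_f}.

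First I would prove that $I_W=I_G$ as graded subspaces of $S$. By \cite[Theorem~2.6]{mystic_reflections} and classical invariant theory, both $S_{-1}(V)^W$ and $S(V)^G$ are spanned by products of the \emph{same} generators $p_1^{(m)},\dots,p_{n-1}^{(m)},r^{(\frac mp)}$ of \eqref{eq:invariant_generators}; since $m$ is even each $p_k^{(m)}$ involves only even powers of a single variable and so is central in $S_{-1}(V)$, while multiplying a standard monomial by $r^{(\frac mp)}$ on either side changes it only by a single global sign. Hence the two-sided ideal generated by these elements is $\sum_k p_k^{(m)}S+r^{(\frac mp)}S$ whether the product is taken in $S(V)$ or in $S_{-1}(V)$, i.e.\ $I_W=I_G$. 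Since $S_{-1}(V)=S(V)_\F$ as $\Cc T$-module algebras and $I_G$ is $\Cc T$-stable, it follows that $\underline S_W=S(V)_\F/I_G=(S_G)_\F$ as $\Cc T^{\F}$-module algebras.

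Next, $S_G$ should be made into an object of $(\Cc T,S(V)\rtimes G)\Mod$, so that its Giaquinto--Zhang twist is defined, and then this twist should be identified with $\underline S_W$. The ideal $I_G$ is stable under $G(m,1,n)$, which normalises $G$, so $S_G$ is a module over $S(V)\rtimes G(m,1,n)$; as $m$ is even, $T\subseteq G(m,1,n)$ and its action there is adjoint, so Proposition~\ref{twist_isom_prop}(3) makes $S_G$ a $(\Cc T,S(V)\rtimes G(m,1,n))$-module, and restricting the action to $S(V)\rtimes G$ preserves \eqref{eq:compatibility}; the resulting $\Cc T$-action on $S_G=S/I_G$ is the sign action, matching $\underline S_W$. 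Now I would use that $\phi$ from \cite[Theorem~5.2]{twistsrcas} restricts to an isomorphism $S_{-1}(V)\rtimes W\xrightarrow{\sim}(S(V)\rtimes G)_\F$ to show that the natural $W$-module $\underline S_W$ is the pullback along $\phi$ of $(S_G)_\F$: both $\underline S_W$ and $S_G$ are cyclic modules over $S_{-1}(V)\rtimes W$, resp.\ $S(V)\rtimes G$, generated by the class of $1$ and presented as quotients by a $\Cc T$-stable left ideal, and by functoriality of the twist (Proposition~\ref{prop:twist_is_functorial}) the presentation of $(S_G)_\F$ is the $\F$-twist of that of $S_G$; comparing it with the presentation of $\underline S_W$ via the explicit values $\phi(\underline x_i)=x_i$, $\phi(\sigma_i)=\bar s_i$, $\phi(t)=t$ from \cite[Theorem~5.2]{twistsrcas} yields the intertwining $\underline S_W\cong(S_G)_\F^{\phi}$. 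Granting this, and assuming $\frac mp$ even so that $T\subseteq G$, Remark~\ref{pullback_rmk} shows $(S_G)_\F$ is the pullback of $S_G$ along $\eta$, hence $\underline S_W$ is the pullback of $S_G$ along the algebra isomorphism $\eta\phi|_{\Cc W}=J_1\colon\Cc W\to\Cc G$ of \eqref{j_1_eq}; since $S_G$ affords the regular representation of $G$ by Theorem~\ref{thm:chevalley}, so does $\underline S_W$ as a $W$-module.

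When $\frac mp$ is odd, $T$ is not contained in $G$, and one instead runs the same argument inside $S(V)\rtimes G(m,1,n)$ (or, in the spirit of Theorem~\ref{embedding_thm}, inside $S(V)\rtimes G(m,p/2,n)$), where the adjoint $\Cc T$-action and the map $\eta$ are available; here $\eta\phi$ becomes only an algebra embedding $\Cc W\hookrightarrow\Cc G(m,1,n)$, so one must additionally check that the resulting $W$-module — which still has dimension $\dim\underline S_W=\dim S_G=\prod_i d_i=|W|$ by the first step — is again the regular one, a more delicate point to be handled much as the $D_n$ versus $\mu(D_n)$ comparison of Section~\ref{twisting_characters_d_sec}. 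I expect the principal obstacle to be the intertwining $\underline S_W\cong(S_G)_\F^{\phi}$ of the previous paragraph: although the underlying graded spaces and the polynomial subalgebras already coincide after the first step, the two group actions — the genuine $W$-action on the skew algebra $S_{-1}(V)/I_W$ versus the twisted $G$-action on $S(V)/I_G$ — are intrinsically different, so matching them really does require the precise form of $\phi$ on generators.
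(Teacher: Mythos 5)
Your first two steps reproduce the backbone of the paper's argument: the identification $I_W=I_G$ is Lemma~\ref{lem:underlying_spaces}, and the statement that $\phi$ intertwines the natural $W$-action on $\underline S_W$ with the Giaquinto--Zhang twist of the $G$-action on $S_G$ is Lemma~\ref{lem:twisted_action} together with Corollary~\ref{cor:twist} (the paper proves the intertwining by realising $S_{-1}(V)$ as a module induced from the trivial module of $S_{-1}(V^*)\rtimes W$ inside $\underline H_{0,0}(W)$ and applying Proposition~\ref{twisting_induced_rep_prop}, rather than by comparing cyclic presentations, but your route to this point is viable). For $\frac mp$ even your ending is also correct: Remark~\ref{pullback_rmk} applies because $T\subseteq G$ acts adjointly, so $\underline S_W$ is the pullback of $S_G$ along the algebra isomorphism $\eta\phi|_{\Cc W}=J_1$, and the pullback of a free rank-one module along an isomorphism is free of rank one.

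The genuine gap is the case $\frac mp$ odd, which you flag but do not close, and which includes the most interesting instances of the theorem (e.g.\ $W=\mu(D_n)\neq D_n$). Your proposed fix --- running the argument inside $S(V)\rtimes G(m,1,n)$ or $G(m,\frac p2,n)$ --- only produces an algebra \emph{embedding} $\Cc W\hookrightarrow\Cc G(m,1,n)$, and the restriction of the regular $G(m,1,n)$-module to the image of $\Cc W$ is not what you need: you must identify the pullback of the specific module $S_{G(m,p,n)}$ (not $S_{G(m,1,n)}$), and pulling back along a non-surjective map gives no a priori control on the isomorphism type. The paper avoids the case split entirely by computing the character rather than the module: Proposition~\ref{prop:twist_trace} shows that for \emph{any} counital cocycle on a Hopf algebra with involutive antipode the trace of $a\rhd_{\F}$ equals the trace of $a\rhd$ --- no adjointness and no embedding $T\subseteq G$ required --- so $\tr_{\underline S_W}(g\,\underline\rhd)=\tr_{S_G}(\phi(g)\rhd)=\chi_{\Cc G}(\phi(g))$ by Chevalley; and Lemma~\ref{lem:reg_char} shows $\chi_{\Cc G}\circ\phi=\chi_{\Cc W}$ directly from the form $\phi^{-1}(wt)=w\theta_w t$ with $\theta_{1}=1$. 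To complete your proof you would need either these two trace statements or some other uniform mechanism valid for odd $\frac mp$; as written, the odd case remains unproven.
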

\nt
We prove Theorem~\ref{thm:nccoinv} over the next few sections, using the techniques we developed above for working with cocycle twists.

\subsection{\texorpdfstring{$W$}{}-actions and Giaquinto-Zhang twists of \texorpdfstring{$G$}{}-actions.} 

We are going to use the rational Cherednik algebra 
$H:=H_{0,0}(G)$ of $G=G(m,p,n)$ as well as the negative braided
Cherednik algebra $\underline{H} := \underline{H}_{0,0}(W)$.
We will make use only of the semidirect product relations in these algebras, so for simplicity we set the parameters $t,c$ to $0$.
To state the next Lemma, we need to recall from \cite[Remark 5.10]{twistsrcas} that the group algebra $\Cc W$ is the middle term in the PBW-type algebra factorisation
\begin{equation}\label{eq:pbw_braided}
\underline{H} = S_{-1}(V) \otimes \Cc W \otimes S_{-1}(V^*).
\end{equation}
The result \cite[Theorem 5.2]{twistsrcas}, establishes an algebra isomorphism $\phi\colon \underline H \to H_{\mathcal F}$ between $\underline H$ and the cocycle twist of $H$ by $\mathcal F$.
The map $\phi$ is defined on generators of $\underline H$ by 
$$
\phi(\sigma_i)=\bar s_i, \quad 
\phi(t)=t,\quad
\phi(x_j)=x_j, \quad \phi(y_j)=y_j
$$
for $i\in [n-1]$, $t\in T(m,p,n)$ and $j\in [n]$.
In particular, $\phi$ restricts to an isomorphism 
$\Cc W \to (\Cc G)_{\mathcal F}$. Furthermore,
\begin{equation} \label{eq:phi-identity}
\phi|_{S_{-1}(V)} = \id_S\colon S_{-1}(V) \to S(V)_{\mathcal F},
\end{equation}
i.e., $\phi$ is the identity on the underlying vector space 
$S$ because it maps every standard monomial 
in $x_1,\dots,x_n$ to itself \cite[section 5.9]{twistsrcas}.
We show that $\phi$ intertwines the actions of 
$\Cc W$ and $(\Cc G)_{\mathcal F}$ on $S$:
\begin{lemma} \label{lem:twisted_action}
	Let $\underline{\rhd}$ be the action of $\Cc W$ on $S_{-1}(V)$, and let
	$\rhd_{\mathcal F}\colon (\Cc G)_{\mathcal F}
	\otimes S(V)_{\mathcal F}\to S(V)_{\mathcal F}$ be the Giaquinto-Zhang twist by $\mathcal F$ of the natural action 
	 of $\Cc G$ on $S(V)$.
	Then for all $u\in \Cc W$, 
	$u\mathop{\underline{\rhd}} = \phi(u)\rhd_{\mathcal F}$ 
	as endomorphisms of the vector space $S$.
\end{lemma}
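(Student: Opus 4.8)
The plan is to recognise both sides of the asserted identity as restrictions of algebra homomorphisms $\Cc W\to\End(S)$ and then to check equality on a generating set of $\Cc W$. On the left, $u\mapsto u\mathop{\underline\rhd}$ is the structure map of $S_{-1}(V)$ as a $\Cc W$-module. On the right, $\rhd_{\mathcal F}$ is the Giaquinto--Zhang twist of the natural $\Cc G$-action on $S(V)$, so $u\mapsto\phi(u)\rhd_{\mathcal F}$ is the composite of the algebra isomorphism $\phi|_{\Cc W}\colon\Cc W\xrightarrow{\sim}(\Cc G)_{\mathcal F}$ with the structure map of the $(\Cc G)_{\mathcal F}$-module $S(V)_{\mathcal F}$; for the twist to be defined one first records that $S(V)$ is an object of $(\Cc T,\Cc G)\Mod$, using that $\Cc G$ is a $\Cc T$-module algebra under $t_i\rhd g=t_igt_i$, that $S(V)$ is a $\Cc T$-module under $t_i\rhd x_j=(-1)^{\delta_{ij}}x_j$ (both restricted from the $\Cc T$-action on $H_{0,0}(G)$ of \cite[Proposition 5.1]{twistsrcas}), and that the compatibility \eqref{eq:compatibility} reduces to the evident identity $t_i\circ g=(t_igt_i)\circ t_i$ of linear maps on $V$. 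Since $\Cc W$ is generated as an algebra by the diagonal subgroup $T(m,p,n)$ together with $\sigma_1,\dots,\sigma_{n-1}$, it suffices to verify $u\mathop{\underline\rhd}=\phi(u)\rhd_{\mathcal F}$ for these generators.

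For a diagonal $t\in T(m,p,n)$ this is immediate: $\phi(t)=t$, the group $T$ acts trivially by conjugation on the diagonal matrices in $\Cc G$, so by the counit axiom for $\mathcal F$ the twist $t\rhd_{\mathcal F}$ coincides with the untwisted action; both $t\mathop{\underline\rhd}$ and $t\rhd_{\mathcal F}$ then scale a standard monomial $x_1^{k_1}\cdots x_n^{k_n}$ by $\prod_j\zeta_j^{k_j}$ (the $\zeta_j$ being the eigenvalues of $t$), with no signs arising because $t$ permutes no coordinates. The substantive case is $u=\sigma_i$, where $\phi(\sigma_i)=\bar s_i$ is the signed transposition matrix in $\Cc G$ with $\bar s_i\cdot x_i=-x_{i+1}$, $\bar s_i\cdot x_{i+1}=-x_i$ and $\bar s_i\cdot x_k=x_k$ otherwise. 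I would compute each side on a standard monomial $x^k$. On the left, $\sigma_i\mathop{\underline\rhd}$ is the algebra automorphism of $S_{-1}(V)$ induced by $\sigma_i\in\GL(V)$; applying it to $x^k$ and straightening the result into standard form via $x_ix_{i+1}=-x_{i+1}x_i$ gives $(-1)^{k_{i+1}+k_ik_{i+1}}\,x^{k'}$, where $k'$ is $k$ with $k_i$ and $k_{i+1}$ interchanged. On the right, $\bar s_i\rhd_{\mathcal F}(x^k)=\rhd\bigl(\mathcal F^{-1}\rhd(\bar s_i\otimes x^k)\bigr)$; the cocycle \eqref{cocycle_f} simplifies — because $p_a:=\frac12(1+t_a)$ and $q_a:=\frac12(1-t_a)$ are orthogonal idempotents — to $\mathcal F=\prod_{a=2}^{n}\bigl(p_a\otimes1+q_a\otimes t_1t_2\cdots t_{a-1}\bigr)$, only the factors with $a\in\{i,i+1\}$ affect $\bar s_i$ (which involves just coordinates $i,i+1$), and the conjugation identities $t_i\bar s_it_i=t_{i+1}\bar s_it_{i+1}=s_i$ and $t_it_{i+1}\bar s_it_it_{i+1}=\bar s_i$ reduce the expansion to
\[
\bar s_i\rhd_{\mathcal F}(x^k)=\tfrac12(\bar s_i+s_i)\rhd x^k+\tfrac{(-1)^{k_i}}{2}(\bar s_i-s_i)\rhd x^k,
\]
where $\rhd$ now denotes the ordinary commutative action on $S(V)$. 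Since $\bar s_i\rhd x^k=(-1)^{k_i+k_{i+1}}x^{k'}$ and $s_i\rhd x^k=x^{k'}$, comparing the resulting scalar with $(-1)^{k_{i+1}+k_ik_{i+1}}$ over the four parities of $(k_i,k_{i+1})$ shows the two sides agree, completing the check on generators.

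The main obstacle I anticipate is not conceptual but one of sign-bookkeeping: disentangling the signs coming from the skew-commutativity of $S_{-1}(V)$, from the linear action of the signed permutation $\bar s_i$ on $S(V)$, and from the $\mathcal F$-twist, and confirming that they cancel correctly in every parity case. When $\frac mp$ is even there is a cleaner route to the $\sigma_i$ case: the $\Cc T$-action on $H_{0,0}(G)$ is then adjoint via the inclusion $\Cc T\hookrightarrow\Cc G$, so Proposition \ref{twist_isom_prop}(3) identifies $\rhd_{\mathcal F}$ with the pullback along the Kulish--Mudrov isomorphism $\eta$ of the ordinary $\Cc G$-action on $S(V)$, and since $\eta\phi$ restricts to $J_1$ on $\Cc W$ the identity reduces to the (again generator-wise) statement that $u\mathop{\underline\rhd}$ and $J_1(u)\rhd$ agree as operators on $S$.
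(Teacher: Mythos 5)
Your proposal is correct, and it takes a genuinely different route from the paper. The paper's proof is structural: it realises $S_{-1}(V)$ as the restriction to $\Cc W$ of the $\underline H_{0,0}(W)$-module induced from the trivial character of $S_{-1}(V^*)\rtimes W$, chases a commutative diagram through $\phi$ (the only delicate point being that the augmentation of $\Cc G$ remains multiplicative on $(\Cc G)_{\mathcal F}$, so that $e_G\phi=e_W$), and then invokes Proposition~\ref{twisting_induced_rep_prop} to identify the resulting $H_{\mathcal F}$-module with the Giaquinto--Zhang twist of the standard module $S(V)$. You instead verify the identity of the two algebra homomorphisms $\Cc W\to\End(S)$ on the generators $T(m,p,n)$ and $\sigma_1,\dots,\sigma_{n-1}$, after first checking that $S(V)$ is a $(\Cc T,\Cc G)$-module so that $\rhd_{\mathcal F}$ is a genuine action (a point the paper only gets implicitly, via the induced module lying in $(\Cc T, H)\Mod$). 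Your key computations are sound: the factorisation $\mathcal F=\prod_a(p_a\otimes 1+q_a\otimes\gamma_1\cdots\gamma_{a-1})$ via orthogonal idempotents, the reduction of $\bar s_i\rhd_{\mathcal F}$ to $\tfrac12(\bar s_i+s_i)\rhd+\tfrac{(-1)^{k_i}}{2}(\bar s_i-s_i)\rhd$, and the four-parity comparison with the straightening sign $(-1)^{k_{i+1}+k_ik_{i+1}}$ all check out. What your approach buys is a self-contained, elementary argument that avoids the Cherednik-algebra and induction machinery entirely and makes the cocycle's effect completely explicit; what it costs is the sign bookkeeping and the loss of the uniform, case-free character of the paper's argument, which moreover recycles Proposition~\ref{twisting_induced_rep_prop} and so fits the paper's overall strategy of deducing representation-theoretic facts from general twisting principles.
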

\begin{proof}
We bring in the Cherednik algeras.
The relations in Definition~\ref{rational_cherednik_defn} 
mean that the action 
$\underline{\rhd}\colon \Cc W \otimes S_{-1}(V)\to S_{-1}(V)$ can be written as the composite map
\begin{equation}
\label{eq:action_composite}	
\Cc W \otimes S_{-1}(V) \hookrightarrow 
\underline{H} \otimes \underline{H}
\xrightarrow{\underline m}
\underline{H} \xrightarrow{\id_{S_{-1}(V)} \otimes \epsilon_W \otimes \epsilon_{V^*}} S,
\end{equation}
where $\underline m$ is the product map on $\underline H$, $\epsilon_W\colon \Cc W \to \Cc$ is the augmentation map of the group algebra $\Cc W$, and $ \epsilon_{V^*}\colon 
S_{-1}(V^*) \to \Cc$ maps a polynomial to its constant term.
The domain of $\id_{S_{-1}(V)} \otimes \epsilon_W \otimes \epsilon_{V^*}$ is the triple tensor product in~\eqref{eq:pbw_braided}.
Thus, \eqref{eq:action_composite} says that the $\Cc W$-module
$S_{-1}(V)$ is the restriction to $\Cc W$ of the representation 
of $\underline H = S_{-1}(V)\cdot (S_{-1}(V^*)\rtimes W)$ induced from the one-dimensional trivial module $\epsilon_W\otimes \epsilon_{V^*}$  of the semidirect product $S_{-1}(V^*) \rtimes W$. Consider the diagram
$$
\xymatrix@C=5.5em@R=2.5em{ 
%
%
\Cc W \otimes S_{-1}(V)
	\ar[r]^{\subset} \ar[d]_{\phi\otimes \id} & 
\underline{H} \otimes \underline{H}	
	 \ar[r]^{\underline m} \ar[d]_{\phi\otimes\phi} & \underline H \ar[r]^{\id \otimes \epsilon_W \otimes \epsilon_{V^*}} \ar[d]_{\phi} & S \ar[d]^{\id} \\
	(\Cc G)_{\mathcal F} \otimes S(V)_{\mathcal F}\ar[r]^{\subset } & 
	H_{\mathcal F}\otimes H_{\mathcal F} 
	 \ar[r]^{m_{\mathcal F}} & H_{\mathcal F} \ar[r]
	 ^{\id \otimes \epsilon_G \otimes \epsilon_{V^*}}
	  & S }
$$
where the leftmost square commutes because $\phi$ restricts to the identity on $S_{-1}(V)$, and the middle square commutes because $\phi$ is an isomorphism of algebras.\bb

\nt
We claim that the rightmost square commutes. This boils down to the equality $e_G \phi = e_W$ of maps $\Cc W \to \Cc$, where 
$e_G\colon (\Cc G)_{\mathcal F}\to\Cc$ is the augmentation map
of $\Cc G$. This may not be obvious, since $\phi$ does not send group elements of $W$ to group elements of $G$. Note that $e_G\colon \Cc G \to \Cc$ is multiplicative on $\Cc G$, being the trivial character of the group $G$. Furthermore, since $\Cc$ carries the trivial action of the group $T$, the Giaquinto-Zhang twist of $e_G$ is the same linear map $e_G$, viewed as a representation $e_G\colon (\Cc G)_{\mathcal F}\to\Cc$
of the twisted algebra $(\Cc G)_{\mathcal F}$. Therefore, $e_G$ is multiplicative on $(\Cc G)_{\mathcal F}$. Thus $e_G \phi$ and $e_W$ are two algebra homomorphisms
from $\Cc W$ to $\Cc$; they both send grouplike generators $\sigma_i$, $i\in [n]$, and $t\in T(m,p,n)$ to $1$, hence coincide
on $\Cc W$.\bb

\nt We conclude that the above diagram commutes.
But it shows that $\phi$ intertwines the action $\underline \rhd$ 
of $\Cc W$ on $S_{-1}(V)$ with the 
$(\Cc G)_{\mathcal F}$-action arising from the 
representation of 
$$
H_{\mathcal F} = S(V)_{\mathcal F} \otimes (\Cc G)_{\mathcal F}
\otimes S(V^*)_{\mathcal F},
$$
on the space $S(V)_{\mathcal F}$, induced from the trivial $(\Cc G)_{\mathcal F}
S(V^*)_{\mathcal F}$-module. The latter is the Giaquinto-Zhang twist of the trivial $(\Cc G)S(V^*)$-module. Therefore, by Proposition~\ref{twisting_induced_rep_prop}, 
the $H_{\mathcal F}$-action on $S(V)_{\mathcal F}$ is the Giaquinto-Zhang twist of the standard $H$-module $S(V)$, arising from the trivial representation of $\Cc G$.
We know that the action of $\Cc G$ on this standard module
is the natural action of $\Cc G$ on $S(V)$, so the lemma is proved.
\end{proof}

\nt Our next step is to replace $S(V)$ and $S_{-1}(V)$ in Lemma~\ref{lem:twisted_action} by the respective coinvariant algebras. For this, we need to show the coinvariant algebras are
also defined on the same underlying vector space.

\begin{lemma} \label{lem:underlying_spaces}
The ideal $I_G$ of $S(V)$ and the ideal $I_W$ of $S_{-1}(V)$ share the same underlying subspace $I$ of $S$. This subspace is stable under the action of the group $T$.
\end{lemma}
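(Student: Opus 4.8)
The plan is to give both ideals the same explicit description inside $S$, generator by generator. By the cited results on invariants, $S(V)^{G}$ and $S_{-1}(V)^{W}$ are generated \emph{as algebras} by the very same elements $g_1=p_1^{(m)},\dots,g_{n-1}=p_{n-1}^{(m)}$, $g_n=r^{(\frac mp)}$ of $S$ (each $g_j$ is either a single standard monomial or a sum of standard monomials, so it is an unambiguous element of $S$). Consequently $I_G$ is the two-sided ideal of $S(V)$ generated by the finite set $\{g_1,\dots,g_n\}\subseteq S$, and $I_W$ is the two-sided ideal of $S_{-1}(V)$ generated by the same finite set. Since a two-sided ideal generated by a finite set is the sum of the principal two-sided ideals of its elements, it is enough to prove, for each $j$, that the principal two-sided ideal $(g_j)$ computed in $S(V)$ equals the one computed in $S_{-1}(V)$, as a subspace of $S$.

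The key point is that $S(V)$ and $S_{-1}(V)$ have the same product on standard monomials up to a sign: $x^a\cdot x^b=\pm x^{a+b}$ in $S_{-1}(V)$ for standard monomials $x^a,x^b$. For $g_n=r^{(\frac mp)}=x_1^{m/p}\cdots x_n^{m/p}$, which is itself a standard monomial, any product $x^a\cdot g_n\cdot x^b$ is, in either algebra, $\pm$ the standard monomial with exponent vector $a+b+(m/p,\dots,m/p)$; letting $a,b$ range over $\Z_{\ge0}^n$ and discarding the (irrelevant) signs, we see that $(g_n)$ is, in both algebras, the span of all standard monomials each of whose exponents is $\ge m/p$. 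For $g_k=p_k^{(m)}=\sum_i x_i^{km}$ we use that $m$, hence $km$, is \emph{even}: moving a block $x_i^{km}$ past any standard monomial contributes a sign $(-1)^{\mathrm{even}}=1$, so $p_k^{(m)}$ is central in $S_{-1}(V)$, and for any standard monomials $x^a,x^b$ one computes $x^a\cdot p_k^{(m)}\cdot x^b=\varepsilon\sum_i x^{a+b+km e_i}$ with a sign $\varepsilon=\pm1$ independent of $i$ --- that is, the same linear combination of standard monomials, up to an overall scalar, as in $S(V)$. Hence $(g_k)$ too is the same subspace of $S$ in both algebras, and summing over $j$ yields $I_G=I_W=:I$.

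It remains to see that $I$ is $T$-stable, and this can be read off the description just obtained. The generator $t_\ell$ of $T$ scales the standard monomial $x^c$ by $(-1)^{c_\ell}$, so every standard monomial with all exponents $\ge m/p$ is a $T$-eigenvector, and so is each vector $\sum_i x^{c+km e_i}$, since all of its terms share the same exponent parities (again because $km$ is even). Thus $I$ is spanned by $T$-eigenvectors, hence $T$-stable. (Alternatively, one can argue that $I_G$ is $T$-stable already in $S(V)$: $T$ normalises $G(m,p,n)$, since both sit inside $G(m,1,n)$ with $G(m,p,n)$ a normal subgroup, so $T$ permutes the homogeneous $G$-invariants of positive degree.) I expect the only genuine obstacle to be the sign bookkeeping in the middle step; the conceptual content is that the evenness of $m$ is exactly what forces the $p_k^{(m)}$-parts of the two ideals to coincide, whereas for $r^{(\frac mp)}$ the coincidence holds automatically at the level of spans.
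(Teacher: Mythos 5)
Your proof is correct and follows essentially the same route as the paper's: both reduce $I_G$ and $I_W$ to the ideals generated by $p_1^{(m)},\dots,p_{n-1}^{(m)},r^{(\frac mp)}$ and then match the spanning sets $M\, g_j\, N$ using that $r^{(\frac mp)}$ is a monomial, that products of standard monomials in $S_{-1}(V)$ agree with those in $S(V)$ up to sign, and that the evenness of $km$ makes the signs consistent across the summands of $p_k^{(m)}$ (equivalently, makes $p_k^{(m)}$ central in $S_{-1}(V)$). Your $T$-stability argument via eigenvector spanning sets is a harmless variant of the paper's observation that $T$ acts by algebra automorphisms fixing the generators up to sign.
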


\begin{proof}
We temporarily write $\cdot$ for $S(V)$-multiplication and  $\star$ for $S_{-1}(V)$-multiplication on the space $S$. Then $I_G$ is the $(\cdot)$-ideal generated by $p_1^{(m)},\dots,p_{n-1}^{(m)}$ and $r^{(\frac mp)}$ as given in \eqref{eq:invariant_generators}, and $I_W$ is the $(\star)$-ideal with the same generators. Hence $I_W$ is spanned by $M\star p_i^{(m)}\star N$ and $M\star r^{(\frac mp)} \star N$ where $M,N$ are standard monomials and $i\in [n]$. Using the facts that $p_i^{(m)}$ is $\star$-central as $m$ is even, that $r^{(\frac mp)}$ is a monomial, and that $M\star N=\pm M\cdot N$ for all monomials $M,N$, we conclude that all elements of this spanning set lie in $I_G$. So $I_W \subseteq I_G$. Similarly $I_G\subseteq I_W$. Note that $T=\langle \gamma_1,\dots,\gamma_n\rangle$ acts on the space $S$ by automorphisms, and also by algebra automorphisms on $S(V)$ and $S_{-1}(V)$. This action leaves $p_i^{(m)}$ invariant, and maps $r^{(\frac mp)}$ to $\pm r^{(\frac mp)}$, hence also maps $I_G$ to $I_G$.
\end{proof}

\begin{cor}\label{cor:twist} The algebras $\underline S_W$ and $S_G$ are defined on the same underlying vector space $S/I$.
As an algebra, $\underline S_W$ is the cocycle twist $(S_G)_{\mathcal F}$ of $S_G$. The isomorphism 
$\phi\colon \Cc W \to (\Cc G)_{\mathcal F}$ intertwines the 
action $\underline \rhd$ of $\Cc W$ on $\underline S_W$ and the Giaquinto-Zhang twist $\rhd_{\mathcal F}$ of the action 
$\rhd$ of $\Cc G$ on $S_G$.
\end{cor}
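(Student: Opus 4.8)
The plan is to assemble Corollary~\ref{cor:twist} directly from the three results that precede it, with essentially no new computation. The statement has three assertions: that $\underline S_W$ and $S_G$ live on the same underlying space $S/I$; that $\underline S_W = (S_G)_{\mathcal F}$ as algebras; and that $\phi$ intertwines the $\Cc W$-action on $\underline S_W$ with the Giaquinto--Zhang twist of the $\Cc G$-action on $S_G$.

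First I would dispose of the underlying-space claim: by Lemma~\ref{lem:underlying_spaces}, $I_G$ and $I_W$ are the same subspace $I$ of $S$, so the quotient vector spaces $S(V)/I_G = S/I$ and $S_{-1}(V)/I_W = S/I$ coincide. Next, for the algebra isomorphism, the key point is that the twist functor $(\ )_{\mathcal F}$ of Proposition~\ref{prop:twist_is_functorial} commutes with passing to a quotient by a $T$-stable ideal: $I$ is a $T$-submodule of $S$ by Lemma~\ref{lem:underlying_spaces}, hence $I_G$ is an ideal in the $H$-module algebra $S(V)$, so $(S_G)_{\mathcal F} = (S(V)/I_G)_{\mathcal F} = S(V)_{\mathcal F}/I_{\mathcal F}$ where $I_{\mathcal F}$ is the same subspace $I$ (the twist does not change the underlying space of the ideal). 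On the other hand, by \eqref{eq:phi-identity} the isomorphism $\phi$ restricts to $\id_S\colon S_{-1}(V)\to S(V)_{\mathcal F}$, which must therefore carry the ideal $I_W = I$ onto $I_{\mathcal F} = I$; quotienting, $\phi$ induces an algebra isomorphism $\underline S_W = S_{-1}(V)/I_W \xrightarrow{\sim} S(V)_{\mathcal F}/I_{\mathcal F} = (S_G)_{\mathcal F}$, which on the underlying space $S/I$ is the identity. That gives the second claim.

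For the third claim, I would invoke Lemma~\ref{lem:twisted_action}: there $\phi$ intertwines $\underline{\rhd}\colon \Cc W\otimes S_{-1}(V)\to S_{-1}(V)$ with the Giaquinto--Zhang twist $\rhd_{\mathcal F}\colon (\Cc G)_{\mathcal F}\otimes S(V)_{\mathcal F}\to S(V)_{\mathcal F}$ of the natural $\Cc G$-action on $S(V)$. Both these actions descend to the quotient by $I$: the $\Cc W$-action on $S_{-1}(V)$ preserves $I_W = I$ (noted right after the definition of $\underline S_W$), and correspondingly the twisted $(\Cc G)_{\mathcal F}$-action preserves $I$ since $I$ is $T$-stable and $I_G$ is $\Cc G$-stable, so the twist of the $\Cc G$-action on $S_G$ is the induced action on $S(V)_{\mathcal F}/I$. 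Passing Lemma~\ref{lem:twisted_action} through the quotient map $S\twoheadrightarrow S/I$ (which, being the identity on underlying spaces and compatible with $\phi$ restricting to $\id$, intertwines everything in sight) yields that the induced $\phi\colon \Cc W\to (\Cc G)_{\mathcal F}$ intertwines $\underline{\rhd}$ on $\underline S_W$ with $\rhd_{\mathcal F}$ on $S_G$, as required.

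I do not expect a serious obstacle here; the only thing to be careful about is the bookkeeping that the twist of a quotient $H$-module algebra is the quotient of the twist, and that all the relevant ideals and actions are genuinely $T$-equivariant so that each functor in play can be applied — but this is exactly what Lemmas~\ref{lem:underlying_spaces} and~\ref{lem:twisted_action} were set up to provide. The mildest subtlety is making explicit that $\phi|_{S_{-1}(V)} = \id_S$ sends $I_W$ to $I_{\mathcal F}$; since both ideals equal $I$ as subspaces of $S$ and $\phi$ is the identity on $S$, this is immediate, but it is worth a sentence so that the induced map on quotients is unambiguous.
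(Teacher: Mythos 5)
Your proposal is correct and follows essentially the same route as the paper: the paper's proof is exactly the combination of Lemma~\ref{lem:underlying_spaces}, the identification $S_{-1}(V)=S(V)_{\mathcal F}$ from \eqref{eq:phi-identity}, and Lemma~\ref{lem:twisted_action}, passed to the quotient by $I=I_G=I_W$, which is an ideal in both algebras and stable under all the actions involved. Your additional remarks — that twisting by $\mathcal F$ commutes with quotienting by a $T$-stable ideal, and that $\phi|_{S_{-1}(V)}=\id_S$ carries $I_W$ onto $I$ inside $S(V)_{\mathcal F}$ — simply make explicit the bookkeeping the paper leaves implicit.
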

\begin{proof}
This follows from the fact that $S_{-1}(V)=S(V)_{\mathcal F}$ \eqref{eq:phi-identity} and
Lemma~\ref{lem:twisted_action}, by passing to the quotient
modulo $I=I_G=I_W$ (Lemma~\ref{lem:underlying_spaces}), which is 
an ideal in both algebras and is stable under all actions 
considered here.
\end{proof}

\subsection{The trace is unchanged under the twist.}

To prove Theorem~\ref{thm:nccoinv}, we calculate 
the character of the representation of $W$ afforded by $\underline S_W$, that is, the trace $\tr_{\underline S_W}(g\underline \rhd)$ of the action 
$\underline\rhd$ of $g\in W$ on $\underline S_W$.
By Corollary~\ref{cor:twist}, this is the same as 
$\tr_{(S_G)_{\mathcal F}}(\phi(g)\rhd_{\mathcal F})$. We will now show that neither the Giaquinto-Zhang twist by $\mathcal F$ nor the
application of $\phi$ changes the trace. The former follows from a 
general result about Giaquinto-Zhang twists:

\begin{proposition}\label{prop:twist_trace}
Let $H$ be a Hopf algebra with involutive antipode over a field $\Kk$, $A$ an $H$-module algebra, $V$ a finite-dimensional $(H,A)$-module where $A$ acts via $\rhd$, and $\mathcal F\in H\otimes H$ a $2$-cocycle. Denote the $\mathcal F$-twisted action of $A_{\mathcal F}$ on the space $V$ by $\rhd_{\F}$.\bb

\nt For any element $a$ of the vector space of $A$, $\tr_V(a\rhd) = \tr_V(a\rhd_{\mathcal F})$.
\end{proposition}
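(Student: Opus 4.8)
Write $\rho\colon A\to\End(V)$ for the representation afforded by $\rhd$ and $\pi\colon H\to\End(V)$ for the $H$-module structure on $V$, and set $\F^{-1}=f'\otimes f''$ (summation suppressed). By \eqref{eq:def_twist} we have $a\rhd_{\F}v=(f'\rhd a)\rhd(f''\rhd v)$, so $\rho_{\F}(a)=\rho(f'\rhd a)\circ\pi(f'')$ as an operator on $V$, and the claim to be proved is $\tr_V\bigl(\rho(f'\rhd a)\,\pi(f'')\bigr)=\tr_V(\rho(a))$ for every $a$.

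First I would record that the compatibility axiom \eqref{eq:compatibility} is the operator identity $\pi(h)\rho(a)=\rho(h_{(1)}\rhd a)\,\pi(h_{(2)})$ on $V$, and that it implies, using only the antipode and counit axioms,
\[
\rho(h\rhd a)=\pi(h_{(1)})\,\rho(a)\,\pi\bigl(S(h_{(2)})\bigr),\qquad h\in H,\ a\in A.
\]
Applying this with $h=f'$ and then using cyclicity of the trace gives
\[
\tr_V(\rho_{\F}(a))=\tr_V\Bigl(\pi(f'_{(1)})\,\rho(a)\,\pi\bigl(S(f'_{(2)})\bigr)\,\pi(f'')\Bigr)=\tr_V\bigl(\rho(a)\,\pi(Z)\bigr),\qquad Z:=S(f'_{(2)})\,f''\,f'_{(1)}\in H .
\]
Since $\pi(1_H)=\id_V$, the proposition therefore reduces --- uniformly in $a$ and $V$ --- to the Hopf-algebraic identity $Z=1_H$.

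Proving $Z=1_H$ is the heart of the argument, and it is here that involutivity of $S$ is used. The plan is to feed the $2$-cocycle equation for $\F^{-1}$, rearranged into the form $(\triangle\otimes\id)(\F^{-1})=(\id\otimes\triangle)(\F^{-1})\cdot(1\otimes\F^{-1})\cdot(\F\otimes1)$, into the definition of $Z$ by applying the linear map $x\otimes y\otimes z\mapsto S(y)zx$ to both sides: the left-hand side is exactly $Z$, while the right-hand side, after simplification using the antipode axiom $\sum S(x_{(1)})x_{(2)}=\epsilon(x)1_H$ and the counit condition $(\id\otimes\epsilon)(\F^{-1})=1_H$, collapses to $\sum_k S(G_k)\,W\,F_k$, where $\F=\sum_k F_k\otimes G_k$ and $W=\sum S(f')f''$. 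A second application of the cocycle equation, together with the identity $\sum S(x_{(2)})x_{(1)}=\epsilon(x)1_H$ (valid precisely because $S^2=\id$) and the invertibility of the element $U=\F_1\cdot S(\F_2)$ from Definition~\ref{def:2-cocycle}, then identifies this expression with $1_H$. I expect this Sweedler-index bookkeeping to be the main --- indeed essentially the only --- obstacle; everything preceding it is formal. As a sanity check, in the case relevant to the applications (with $H=\Cc T$ the group algebra of a finite abelian group, acting on $A=\Cc G$) every leg of $\F^{-1}$ is grouplike, $Z$ collapses immediately to $(\epsilon\otimes\id)(\F^{-1})=1_H$, and no further appeal to $S^2=\id$ is required.
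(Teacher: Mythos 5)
Your reduction of the proposition to a single identity in $H$ is correct and clean: the formula $\rho(h\rhd a)=\pi(h_{(1)})\,\rho(a)\,\pi(Sh_{(2)})$ does follow from \eqref{eq:compatibility} and the antipode/counit axioms, and cyclicity of the trace then gives $\tr_V(a\rhd_{\F})=\tr_V\bigl(\rho(a)\,\pi(Z)\bigr)$ with $Z=S(f'_{(2)})f''f'_{(1)}$; your first pass through the cocycle equation also correctly rewrites $Z$ as $\sum_k S(G_k)\,W F_k$ with $W=\sum S(f')f''=U^{-1}$. The gap is the final step. You assert that ``a second application of the cocycle equation,'' together with $\sum S(x_{(2)})x_{(1)}=\epsilon(x)1_H$ and invertibility of $U$, collapses $\sum_k S(G_k)U^{-1}F_k$ to $1_H$, but you never exhibit that application, and it is not routine bookkeeping. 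The antipode axiom only contracts adjacent Sweedler legs in the orders $x_{(1)}\cdots S(x_{(2)})$ or $S(x_{(1)})\cdots x_{(2)}$; in $Z$ the legs of $f'$ occur in the cyclically permuted order $S(f'_{(2)})\cdots f'_{(1)}$ with $f''$ wedged between them, and after your first reduction the element $U^{-1}$ is trapped between $S(G_k)$ and $F_k$ inside an entangled sum. What \emph{does} fall out easily (apply $x\otimes y\mapsto S(y)x$ to $\F^{-1}\F=1\otimes 1$) is the sibling identity $\sum_k S(G_k)\,S(U)^{-1}F_k=1_H$; matching this with your target amounts to knowing $S(U)=U$, which itself requires proof. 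So the decisive step of your argument is exactly the one that is missing.

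By contrast, the paper never cyclically permutes the word. It realises the trace via dual bases, $\tr_V(a\rhd)=\sum_i\langle y_i,a\rhd x_i\rangle$, moves the $H$-factors from the $V^*$-leg of the coevaluation tensor $\sum_i x_i\otimes y_i$ to the $V$-leg using \eqref{eq:ev-coev} \emph{before} anything resembling cyclicity is invoked, and ends up with the three factors of $\F^{-1}$ in the order $f'_{(1)}S(f'_{(2)})f''$, which collapses to $\epsilon(f')f''=1_H$ by the antipode axiom and counitality of $\F$ alone --- the cocycle equation is not used at all. (This is also where $S^2=\id$ enters, via $S^{-2}f'_{(1)}=f'_{(1)}$.) I would either redo your computation at the level of the tensor $\sum_i x_i\otimes y_i$ in $V\otimes V^*$, tracking carefully on which side the transported factor lands, or else supply a complete derivation of $Z=1_H$; as written, the proposal is a correct setup followed by an unproven Hopf-algebraic identity.
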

\begin{proof}
Denote the action of $h\in H$
on $a\in A$ simply by $h(a)$; same for the $H$-action on $V$ and 
on $V^*$.
Recall \cite[Proposition 9.3.3]{majid_1995} 
that the $H$-action on $V^*$ is such that  
the evaluation map $\langle\cdot,\cdot\rangle\colon V^*\otimes V \to \Kk$
and the coevaluation map $\Kk \to V\otimes V^*$, 
$1\mapsto \sum_i x_i \otimes y_i$, are $H$-module maps, where  
$\Kk$ is the trivial $H$-module.  Equivalently, for all $h\in H$, 
$x\in V$, $y\in V^*$,
\begin{equation}\label{eq:ev-coev}\textstyle
\langle y,h(x) \rangle = \langle S^{-1}h(y),x\rangle,\qquad  
\sum_i x_i \otimes h(y_i) = \sum_i S^{-1}h(x_i)\otimes y_i,
\end{equation}
where $S\colon H\to H$ is the antipode. Since $V$ is an $(H,A)$-module, it follows
from~\eqref{eq:compatibility} that for $a\in A$,
\begin{equation}\label{eq:h-act}
h(a)\rhd x = h_{(1)}( a\rhd Sh_{(2)}(x) ).
\end{equation}
By definition of trace, 
$\tr_V(a\rhd) = \sum_i \langle y_i, a\rhd x_i\rangle$. 
Writing $\mathcal F^{-1}\in H\otimes H$ as $f'\otimes f''$ (summation understood),
we compute the trace of the $\mathcal F$-twisted action of $a$:
\begin{align}
	\tr_V(a\rhd_{\mathcal F}) =  \sum_i \langle y_i, a\rhd_{\mathcal F} x_i\rangle \notag
	& = \sum_i \langle y_i, f'(a)\rhd f''(x_i)\rangle \notag
	\\ & = \sum_i \langle y_i, f'_{(1)}(a\rhd Sf'_{(2)}f''(x_i))\rangle \notag
	\\ & = \sum_i \langle S^{-1}f'_{(1)}(y_i), a\rhd Sf'_{(2)}f''(x_i)\rangle
	\label{eq:trace_calc}
\end{align} 
where we used definition \eqref{eq:def_twist} of $\rhd_{\mathcal F}$, 
\eqref{eq:ev-coev} and \eqref{eq:h-act}. 
We now manipulate the coevaluation tensor:
\begin{align*}
Sf'_{(2)}f''(x_i) \otimes S^{-1}f'_{(1)}(y_i) 
& = S^{-2}f'_{(1)}Sf'_{(2)}f''(x_i) \otimes y_i 
\\   & = f'_{(1)}Sf'_{(2)}f''(x_i) \otimes y_i
\\ & = \epsilon (f')f''(x_i) \otimes y_i
\end{align*}
where we applied the second part of~\eqref{eq:ev-coev}, the assumption that $S^2=\id_H$ and the antipode law. Here $\epsilon\colon H \to \Kk$ is the counit. 
Since by Definition~\ref{def:2-cocycle} $\mathcal F$ is counital, $(\epsilon \otimes \id_H)\mathcal F^{-1}=1$ 
which means $\epsilon (f')f''(x_i) \otimes y_i$ is equal to the coevaluation tensor 
$x_i \otimes y_i$. Thus, \eqref{eq:trace_calc} rewrites as 
$\sum_i \langle y_i, a\rhd x_i\rangle$ which is $\tr_V(a\rhd)$.
\end{proof}

\subsection{The trace is unchanged under \texorpdfstring{$\phi$}{}; proof of Theorem \ref{thm:nccoinv}.}

If $\Gamma$ is any finite group, consider the linear map 
$$
\chi_{\Cc \Gamma}\colon \Cc \Gamma \to \Cc,
\text{ defined on the basis $\{g\mid g\in\Gamma\}$ of $\Cc \Gamma$  by }
\chi_{\Cc \Gamma}(g)=\begin{cases} |\Gamma| & g=1_\Gamma,\\
		0 & g\neq 1_\Gamma.
\end{cases}
$$
Of course, we recognise $\chi_{\Cc \Gamma}$ as the character of the regular representation of $\Gamma$. 
Viewing $\phi\colon \Cc W\to (\Cc G)_{\mathcal F}$ as a linear map between the underlying vector spaces $\Cc W$, $\Cc G$, 
we show that $\phi$ intertwines the regular characters:
\begin{lemma}\label{lem:reg_char}
$\chi_{\Cc G} \circ \phi = \chi_{\Cc W}$.
\end{lemma}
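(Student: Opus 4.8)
The plan is to read both $\chi_{\Cc G}$ and $\chi_{\Cc W}$ as characters of regular representations and exploit that $\phi$ is an algebra isomorphism, rather than to compute $\phi$ on the group basis of $\Cc W$ (which is awkward, since $\phi$ does not send group elements to group elements). Since $\phi\colon\Cc W\xrightarrow{\sim}(\Cc G)_{\mathcal F}$ is an isomorphism of algebras, pulling back the left regular $(\Cc G)_{\mathcal F}$-module along $\phi$ produces a $\Cc W$-module isomorphic to the left regular $\Cc W$-module: indeed $\phi^{-1}$, being an algebra isomorphism $(\Cc G)_{\mathcal F}\to\Cc W$, satisfies $\phi^{-1}(\phi(u)\star x)=u\,\phi^{-1}(x)$ for $u\in\Cc W$, $x\in\Cc G$ (where $\star$ is the twisted product), so it intertwines the two $\Cc W$-actions. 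Comparing characters gives $\chi_{\Cc W}=\chi'\circ\phi$, where $\chi'\colon(\Cc G)_{\mathcal F}\to\Cc$ is the character of the left regular module of the \emph{twisted} algebra $(\Cc G)_{\mathcal F}$. Thus the statement reduces to showing $\chi'=\chi_{\Cc G}$ as linear functionals on the vector space $\Cc G$, i.e.\ that for every $a\in\Cc G$ the trace of $x\mapsto a\star x$ on $(\Cc G)_{\mathcal F}$ equals the trace of $x\mapsto ax$ on $\Cc G$.

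This last claim is exactly what Proposition~\ref{prop:twist_trace} delivers, once the regular $(\Cc G)_{\mathcal F}$-module is displayed as a Giaquinto--Zhang twist; here $H=\Cc T$, whose antipode is involutive since $S(t)=t^{-1}=t$ for $t\in T$, and $A=\Cc G$. Take $V=\Cc G$ with its left regular $\Cc G$-action $\rhd$, equipped with the $\Cc T$-action $t\rhd v=tvt$ (conjugation inside $\mathbb{G}_n$, which preserves $\Cc G$ because conjugating a monomial matrix by $t_i$ changes the sign of at most two of its nonzero entries and hence leaves the product of all its nonzero entries --- and so membership in $G(m,p,n)$ --- unchanged). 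For $t$ grouplike the compatibility condition \eqref{eq:compatibility} reads $t(av)t=(tat)(tvt)$, which holds since $t^2=1$, so $V$ is a $(\Cc T,\Cc G)$-module. Because the $\Cc T$-action on $A=\Cc G$ used for the cocycle twist (from \cite[Proposition 5.1]{twistsrcas}) is the \emph{same} conjugation action, formula \eqref{eq:def_twist} reads $a\rhd_{\mathcal F}v=m\bigl(\mathcal F^{-1}\rhd(a\otimes v)\bigr)$, which is the twisted product $a\star v$ of $(\Cc G)_{\mathcal F}$; hence $V_{\mathcal F}$ is exactly the left regular module of $(\Cc G)_{\mathcal F}$, with character $\chi'$. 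Proposition~\ref{prop:twist_trace} then gives $\tr_V(a\rhd)=\tr_V(a\rhd_{\mathcal F})$ for all $a\in\Cc G$, i.e.\ $\chi_{\Cc G}=\chi'$ on $\Cc G$; combined with $\chi_{\Cc W}=\chi'\circ\phi$ from the first paragraph, this yields $\chi_{\Cc G}\circ\phi=\chi_{\Cc W}$.

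The only genuine content is the verification in the middle step that $V=\Cc G$ with the conjugation $\Cc T$-action is a $(\Cc T,\Cc G)$-module whose $\mathcal F$-twist is the regular $(\Cc G)_{\mathcal F}$-module --- this is what lets us apply Proposition~\ref{prop:twist_trace}. I expect the point needing care to be that, when $\frac mp$ is odd, $T$ is \emph{not} a subgroup of $G(m,p,n)$, so the $\Cc T$-action on $\Cc G$ is not adjoint and Proposition~\ref{twist_isom_prop}(3) is unavailable; one therefore genuinely needs the more general Proposition~\ref{prop:twist_trace}, together with the direct check that conjugation by $t_i$ is a well-defined action preserving $\Cc G$. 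An alternative but less slick route would be to use the orthogonality relations to write $\chi_{\Cc\Gamma}=\sum_\chi(\dim\chi)\chi$ over irreducibles and to track how the algebra isomorphism $\phi$ permutes the central primitive idempotents; the trace-invariance argument is shorter and reuses the machinery already established.
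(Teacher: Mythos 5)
Your proof is correct, but it takes a genuinely different route from the paper's. The paper argues directly on the group basis: it invokes the explicit description of $\phi^{-1}$ from \cite[Proposition 5.7]{twistsrcas}, namely $\phi^{-1}(wt)=w\theta_w t$ with $\theta_w\in\Cc T(2,1,n)$ and $\theta_{1_{\mathbb S_n}}=1$, and checks that $\chi_{\Cc W}\circ\phi^{-1}$ and $\chi_{\Cc G}$ agree on every $wt\in G$ (both vanish when $w\ne 1$, and $\phi^{-1}$ fixes $T(m,p,n)$ pointwise). You instead avoid any computation of $\phi$ on group elements: you factor the claim into (i) the formal observation that an algebra isomorphism pulls back the regular module of $(\Cc G)_{\mathcal F}$ to the regular module of $\Cc W$, giving $\chi_{\Cc W}=\chi'\circ\phi$, and (ii) the identification of the regular $(\Cc G)_{\mathcal F}$-module as the Giaquinto--Zhang twist of the regular $\Cc G$-module (with the conjugation $\Cc T$-action on $V=\Cc G$, which you correctly verify is well defined and satisfies \eqref{eq:compatibility} even when $\tfrac mp$ is odd and $T\not\subset G$), so that Proposition~\ref{prop:twist_trace} yields $\chi'=\chi_{\Cc G}$. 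Your argument buys generality and conceptual clarity --- it reuses the trace-invariance machinery the paper already established and would apply verbatim to any $\phi$ of this type, with no need for the structural formula $\phi^{-1}(wt)=w\theta_w t$; the paper's computation is shorter given that formula is already available from \cite{twistsrcas}, and it makes transparent exactly which feature of $\phi$ (preserving the ``$\mathbb S_n$-component'') is responsible for the lemma. Both are complete proofs.
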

\begin{proof}
We have $G=G(m,p,n)=\mathbb S_n \ltimes T(m,p,n)$, so elements of $G$ are written as $wt$ where $w\in \mathbb S_n$ and $t\in  T(m,p,n)$. By \cite[Proposition 5.7]{twistsrcas}, there exist elements $\theta_w\in \Cc T(2,1,n)$, such that $\phi^{-1}(wt)=w \theta_w t$ for all $w,t$. Moreover, the proof of \cite[Proposition 5.7]{twistsrcas} shows that $\theta_{1_{\mathbb S_n}}=1$.\bb

\nt So if $w\in \mathbb S_n$, $w\ne 1_{\mathbb S_n}$, and $t\in T(m,p,n)$, 
then $wt\ne 1_G$ so $\chi_{\Cc G}(wt)=0$. On the other hand, 
$\chi_{\Cc W}(\phi^{-1}(wt)) = \chi_{\Cc W}(w\theta_w t)=0$ because 
$w\ne 1_{\mathbb S_n}$ guarantees that $w\theta_w t$ is a linear combination of group elements of $W$, none of which is $1_W$.\bb

\nt In the case $w=1_{\mathbb S_n}$, $t\in T(m,p,n)$, we have 
$\phi^{-1}(t)=t$, hence $\chi_{\Cc G}(t)$ and $\chi_{\Cc W}(\phi^{-1}(t))$ are both $0$ if $t\ne 1_G$ and are both $|W|=|G|$ if $t=1_G$. This shows that $\chi_{\Cc W} \circ \phi^{-1}$ and $\chi_{\Cc G}$ agree on all elements of $G$, proving the Lemma.
\end{proof}

\nt We are ready to finish the proof of Theorem~\ref{thm:nccoinv}.

\begin{proof}[Proof of  Theorem~\ref{thm:nccoinv}]
We need to show that the trace, $\tr_{\underline S_W}(g\underline\rhd)$, of the action of $g\in W$ on $\underline S_W$, is equal to 
$\chi_{\Cc W}(g)$. By Corollary~\ref{cor:twist}, the trace equals
$\tr_{(S_G)_{\mathcal F}}(\phi(g)\rhd_{\mathcal F})$, which by Proposition~\ref{prop:twist_trace} coincides with 
$\tr_{S_G}(\phi(g)\rhd)$. By Chevalley's Theorem~\ref{thm:chevalley}, 
the latter is $\chi_{\Cc G}(\phi(g))$ which by Lemma~\ref{lem:reg_char}
is $\chi_{\Cc W}(g)$, as claimed.\bb

\nt Our application of Proposition~\ref{prop:twist_trace}
is valid because the Hopf algebra $\Cc T$ has involutive antipode.
Indeed, not only the antipode of any commutative or cocommutative Hopf algebra is involutive \cite[Corollary 7.1.11]{RadfordBook}, but the antipode of $\Cc T$ is, in fact, the identity map because 
every element of the group $T$ is self-inverse.
\end{proof}

\section{Twists of restricted rational Cherednik algebras}
\label{sec:last}
\subsection{Restricted rational Cherednik algebras.}
Let $\Kk$ be a field, $V$ a finite-dimensional vector space over $\Kk$, and 
$G\subset GL(V)$ a finite non-modular reflection group.
The classical coinvariant algebra of $G$ 
appears as a PBW-type factor in the algebra factorisation of the 
\define{restricted rational Cherednik algebra} $\overline H_c(G)$. 
These finite-dimensional quotients of rational Cherednik algebras $H_{0,c}(G)$ 
at $t=0$ originate from Gordon's 2003 paper \cite{gordon_2003} and have attracted 
considerable interest since.\bb

\nt Of interest to us is the case 
$G=G(m,p,n)$, and in this section we take $\Kk$ to be 
a subfield of $\Cc$ over which the reflection representation 
of $G(m,p,n)$ is defined; the smallest such $\Kk$ is therefore 
the $m$th cyclotomic extension of $\Q$.
The $\Kk$-rational Cherednik algebra $H_{0,c}(G)$, given by Definition~\ref{rational_cherednik_defn}, is defined over $\Kk$ as long as the parameter $c$ is $\Kk$-valued.
Thiel \cite{thiel} defines the restricted rational Cherednik algebra as 
$$\overline H_c(G) = H_{0,c}(G)/ (I,I')$$
where $I$ is the ideal generated by homogeneous $G$-invariants of positive degree in $S(V)$, $I'$ is the similar ideal in $S(V^*)$, and $(I,I')$ is the ideal generated by $I$ and $I'$ in $H_{0,c}(G)$. One has the following algebra factorisation,
$$
\overline H_c(G) \cong S(V)/I \otimes \Kk G \otimes S(V^*)/I' = S_G \otimes \Kk G \otimes S_G',
$$
where $S_G$ is the coinvariant algebra of $G$ from the previous section, and $S_G'$ is defined in the same way from the action of $G$ on the dual space $V^*$.

\subsection{Twists of restricted rational Cherednik algebras.}
Let now $W=\mu(G(m,p,n))$ be the mystic reflection group 
which is the mystic counterpart of $G$.
We define a finite-dimensional quotient of the negative braided Cherednik algebra 
$\underline{H}_{0,\underline{c}}(W)$ by
$$
\overline{\underline{ H}}_{\underline c}(W) 
= \underline{H}_{0,\underline{c}}(W) / (I_W,I_W')
$$
where $I_W\subseteq S_{-1}(V)$, $I_W'\subseteq S_{-1}(V^*)$
are ideals generated by homogeneous invariants of positive degree. This can be called the ``restricted negative braided Cherednik algebra". The results of Lemma \ref{lem:underlying_spaces}, Corollary \ref{cor:twist} and \cite[Theorem 5.2]{twistsrcas} immediately lead us to the following, 
\begin{theorem}
The cocycle twist of $\overline H_c(G)$ by the cocycle $\F$ given in \eqref{cocycle_f} is isomorphic to $\underline{\overline{H}}_{\underline{c}}(W)$.
This algebra has triangular decomposition
$$
\underline{\overline{H}}_{\underline{c}}(W)\cong S_{-1}(V)/I_W \otimes \Kk G \otimes S_{-1}(V^*)/I_W' = \underline S_W \otimes \Kk G \otimes \underline S_W'
$$
into the group algebra of $W$ and two noncommutative coinvariant algebras.
\end{theorem}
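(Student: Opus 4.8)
\nt The plan is to deduce both assertions by assembling three facts already in place: the isomorphism $\phi\colon \underline H_{0,\underline c}(W)\xrightarrow{\sim} H_{0,c}(G)_{\F}$ of \cite[Theorem 5.2]{twistsrcas}, which restricts to the identity on the vector spaces $S_{-1}(V)$ and $S_{-1}(V^*)$ and to an isomorphism $\Kk W\to(\Kk G)_{\F}$ on the middle PBW factor; the elementary fact that cocycle twisting commutes with passing to a quotient by an $H$-stable two-sided ideal; and the identifications of underlying subspaces recorded in Lemma~\ref{lem:underlying_spaces} and Corollary~\ref{cor:twist}.

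\nt First I would record the quotient-versus-twist compatibility. If $A$ is an $H$-module algebra and $J\subseteq A$ is a two-sided ideal which is also an $H$-submodule, then the same subspace $J$ is a two-sided ideal of $A_{\F}$, because the twisted product $m_{\F}=m\circ(\F^{-1}\rhd-)$ carries $A\otimes J$ and $J\otimes A$ into $J$; moreover $m_{\F}$ descends to the quotient and gives an isomorphism $(A/J)_{\F}\cong A_{\F}/J$ of $H^{\F}$-module algebras (this is the functoriality of the twist, Proposition~\ref{prop:twist_is_functorial}, applied to the quotient map $A\to A/J$). I would apply this with $A=H_{0,c}(G)$ and $J=(I,I')$: by Lemma~\ref{lem:underlying_spaces} and its evident $V^*$-analogue, the ideals $I=I_G\subseteq S(V)$ and $I'\subseteq S(V^*)$ are $T$-stable, hence so is $(I,I')$, and the $\Kk T$-module algebra structure on $H_{0,c}(G)$ descends to $\overline H_c(G)$. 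This yields $\overline H_c(G)_{\F}\cong H_{0,c}(G)_{\F}/(I,I')$.

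\nt Next I would push $\phi$ through the quotients. Being an algebra isomorphism, $\phi$ carries the two-sided ideal of $\underline H_{0,\underline c}(W)$ generated by $I_W\cup I_W'$ onto the two-sided ideal of $H_{0,c}(G)_{\F}$ generated by $\phi(I_W)\cup\phi(I_W')$. By \eqref{eq:phi-identity}, and its identical counterpart for $S_{-1}(V^*)$ (since $\phi$ likewise fixes every standard monomial in $y_1,\dots,y_n$), the restriction of $\phi$ to the vector space $S_{-1}(V)$ is the identity, and similarly on $S_{-1}(V^*)$; by Lemma~\ref{lem:underlying_spaces} the subspaces $I_W$ and $I_G=I$ literally coincide, and the same holds on the dual side, so $\phi(I_W)=I$ and $\phi(I_W')=I'$. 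Hence $\phi$ maps $(I_W,I_W')$ onto $(I,I')$ and descends to an isomorphism
$$
\underline{\overline{H}}_{\underline{c}}(W)=\underline H_{0,\underline c}(W)/(I_W,I_W')\ \xrightarrow{\ \sim\ }\ H_{0,c}(G)_{\F}/(I,I')\ \cong\ \overline H_c(G)_{\F},
$$
which is the first assertion. For the triangular decomposition I would start from the classical factorisation $\overline H_c(G)\cong S_G\otimes\Kk G\otimes S_G'$, which is an algebra factorisation in $\Kk T\Mod$: the three factors are $T$-submodule subalgebras, the $T$-action being the descent of the one on $H_{0,c}(G)$. Applying Proposition~\ref{prop:twist_of_factorisation}, iterated over the two steps $\overline H_c(G)=S_G\cdot(\Kk G\cdot S_G')$, gives $\overline H_c(G)_{\F}=(S_G)_{\F}\cdot(\Kk G)_{\F}\cdot(S_G')_{\F}$. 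Now $(S_G)_{\F}=\underline S_W$ and $(S_G')_{\F}=\underline S_W'$ by Corollary~\ref{cor:twist} and its $V^*$-analogue, and $\phi$ restricts to an isomorphism $\Kk W\to(\Kk G)_{\F}$; transporting this factorisation back along the isomorphism of the first assertion produces the stated decomposition $\underline{\overline{H}}_{\underline{c}}(W)\cong\underline S_W\otimes\Kk W\otimes\underline S_W'$.

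\nt I do not anticipate a serious obstacle --- every ingredient is available and the remaining work is bookkeeping. The one step requiring genuine care is the claim that $\phi$ carries $(I_W,I_W')$ onto $(I,I')$ despite $\phi$ not sending group elements of $W$ to group elements of $G$; this is settled, as above, by the purely formal fact that an algebra isomorphism maps the two-sided ideal generated by a set $X$ onto the ideal generated by $\phi(X)$, together with the equalities $\phi(I_W)=I$ and $\phi(I_W')=I'$ coming from \eqref{eq:phi-identity} and Lemma~\ref{lem:underlying_spaces}. A second point worth making explicit is that $(I,I')$, read inside the PBW decomposition $S(V)\otimes\Kk G\otimes S(V^*)$, equals $I\otimes\Kk G\otimes S(V^*)+S(V)\otimes\Kk G\otimes I'$, which is exactly what makes the quotient itself factorise --- the same computation that underlies the factorisation of $\overline H_c(G)$, now carried through the twist.
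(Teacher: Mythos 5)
Your proposal is correct and follows the same route the paper intends: the paper gives no detailed argument, simply asserting that Lemma~\ref{lem:underlying_spaces}, Corollary~\ref{cor:twist} and the isomorphism $\phi$ of \cite[Theorem 5.2]{twistsrcas} ``immediately lead'' to the result, and your write-up supplies exactly the bookkeeping behind that claim (twist commutes with quotients by $T$-stable ideals, $\phi$ carries $(I_W,I_W')$ onto $(I,I')$ since it is the identity on the underlying polynomial spaces, and Proposition~\ref{prop:twist_of_factorisation} transports the triangular decomposition). The only cosmetic difference is that you write the middle factor as $\Kk W$ where the paper writes $\Kk G$ while calling it the group algebra of $W$ --- the two are identified as vector spaces via $\phi$, so this is immaterial.
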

\nt We further observe the following, 
\begin{proposition}
If $\frac mp$ is even, then $\underline{\overline{H}}_{\underline{c}}(W)$
and $\overline H_c(G)$ are isomorphic as algebras. 
\begin{proof}
Indeed, Proposition~\ref{twist_isom_prop} applies because 
the action of the Hopf algebra $\Kk T$ 
on $\overline{H}_c(G(m,p,n))$ is adjoint, as it factors via
the embedding of $T$ as the subgroup $T(2,1,n)$ in $G(m,p,n)$. 
\end{proof}
\end{proposition}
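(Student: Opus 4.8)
The plan is to reduce the statement to the Theorem just proved above, which identifies $\underline{\overline{H}}_{\underline c}(W)$ with the cocycle twist $\overline H_c(G)_{\mathcal F}$, and then to show that this twist is in fact isomorphic to $\overline H_c(G)$ itself by invoking Proposition~\ref{twist_isom_prop}(2). The content of the argument is therefore entirely in checking that the $\Kk T$-module algebra structure on $\overline H_c(G)$ used to form the twist is an \emph{adjoint} action in the sense of Definition~\ref{adjoint_action}.

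First I would exhibit the algebra homomorphism $u$. Note that $\frac mp$ even forces $m$ even, so $-1$ is an $m$th root of unity; moreover a diagonal matrix with entries in $\{\pm 1\}$ has product of entries equal to $\pm 1$, which is an $\frac mp$th root of unity precisely because $\frac mp$ is even. Hence $T := T(2,1,n)$ is a subgroup of $G = G(m,p,n)$, giving inclusions $\Kk T \hookrightarrow \Kk G \hookrightarrow H_{0,c}(G)$; composing with the quotient map onto $\overline H_c(G)$ produces a (unital) algebra homomorphism $u\colon \Kk T \to \overline H_c(G)$.

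Next I would verify that the $\Kk T$-action is adjoint via $u$. Each $t_i$ is grouplike with $S(t_i)=t_i^{-1}=t_i$, so the adjoint action of $t_i$ is conjugation $a\mapsto u(t_i)\,a\,u(t_i)$. This matches the $\Kk T$-action recalled after Definition~\ref{rational_cherednik_defn}: indeed $t_i\rhd g = t_i g t_i$ by definition, while the cross-relations $gx_j=g(x_j)g$ and $gy_j=g(y_j)g$ give $t_i x_j t_i = t_i(x_j)\,t_i^2 = (-1)^{\delta_{ij}}x_j$ and $t_i y_j t_i = (-1)^{\delta_{ij}}y_j$. This action passes to the finite-dimensional quotient: by Lemma~\ref{lem:underlying_spaces} applied to the actions of $G$ on $S(V)$ and on $S(V^*)$, the ideal $(I,I')$ is $T$-stable, so both the $\Kk T$-module algebra structure and its adjointness descend to $\overline H_c(G)$, with the homomorphism $u$ above.

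With this in hand, Proposition~\ref{twist_isom_prop}(2), applied with $A=\overline H_c(G)$, $H=\Kk T$ and the cocycle $\mathcal F$ of \eqref{cocycle_f}, yields an algebra isomorphism $\eta\colon \overline H_c(G)_{\mathcal F}\xrightarrow{\sim}\overline H_c(G)$; since $\mathcal F$ and $u$ are defined over $\Q\subseteq\Kk$, this is a $\Kk$-algebra isomorphism. Composing $\eta$ with the isomorphism $\underline{\overline{H}}_{\underline c}(W)\cong \overline H_c(G)_{\mathcal F}$ of the Theorem above gives the claim. The only step requiring genuine care is the adjointness verification of the previous paragraph --- in particular its behaviour on the generators $x_j,y_j$ through the cross-relations, and the observation that $T$-stability of $(I,I')$ suffices to carry the whole module-algebra-plus-adjointness picture down to the restricted algebra; everything after that is a formal consequence of Proposition~\ref{twist_isom_prop} and the preceding Theorem.
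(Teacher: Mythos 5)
Your proposal is correct and follows the same route as the paper: the paper's own proof is a one-line appeal to Proposition~\ref{twist_isom_prop}, justified by the observation that the $\Kk T$-action on $\overline H_c(G(m,p,n))$ is adjoint because it factors through the embedding $T(2,1,n)\subseteq G(m,p,n)$ (which is exactly where the hypothesis that $\tfrac mp$ is even enters). You simply make explicit the verifications the paper leaves implicit --- the construction of $u$, the check on generators, and the $T$-stability of $(I,I')$ ensuring descent to the quotient --- all of which are sound.
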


\subsection{A case where the restricted rational Cherednik algebra is not isomorphic to its twist.}

We stress that Proposition~\ref{twist_isom_prop} establishes an isomorphism between $A_{\mathcal F}$ and $A$ 
over an arbitrary field $\Kk$, as long as the $H$-module structure on $A$ and the cocycle $\mathcal F$ are defined over $\Kk$, and the $H$-action on $A$ is adjoint.
We will now give an explicit example where the action is not adjoint, and the restricted rational Cherednik algebra
is not isomorphic to its twist by~$\mathcal F$.\bb 

\nt Of course, we need $\frac mp$ to be odd, and we consider the case $m=p=n=2$. Note $G(2,2,2)$ is the Klein four-group generated by the commuting involutions $s_{12}$ and $\bar s_{12}$. On the other hand, $\mu(G(2,2,2))$ is the cyclic group of order $4$ generated by the mystic reflection $\sigma_{12}=\begin{pmatrix}0&-1\\ 1&0\end{pmatrix}$.  We see $\Kk$ the minimal field over which the reflection representation of $G(2,2,2)$ and the mystic reflection representation of $\mu(G(2,2,2))$ are defined is the rationals, $\Kk=\Q$.\bb

\nt The proof of the following result is partly based 
on an explicit computer calculation (over the rational 
field).

\begin{proposition}\label{prop:not_isom}
If $c=1$,
then the $\Q$-algebras $\overline H_c(G(2,2,2))$ and 
$\overline{\underline{H}}_{-c}(\mu(G(2,2,2)))$
are not isomorphic.
\begin{proof}
Note that $G(2,2,2)$ is the non-reduced Coxeter group of type 
$A_1\times A_1$, so 
$$
\overline H_c(G(2,2,2)) \cong \overline  H_c(G(2,1,1)) \otimes \overline H_c(G(2,1,1))
$$
where $G(2,1,1)$ is the reflection group $\{\pm 1\}$ 
of order $2$ which operates on the $1$-dimensional space.
Thus, $\overline H_c(G(2,2,2))$ is a $64$-dimensional 
algebra which is a tensor product of two (commuting) 
$8$-dimensional subalgebras.
Explicitly, $\overline  H_c(G(2,1,1))$ is presented as
$$
\langle y,s,x\mid s^2=1,\ sx=-xs,\ ys=-sy,\ 
yx = xy-2cs, \ x^2=y^2=0\rangle. 
$$
It follows that 
$$
Z(\overline H_c(G(2,2,2)))  \cong Z( \overline  H_c(G(2,1,1))) \otimes Z( \overline H_c(G(2,1,1)) ).
$$
By results of \cite{hird2023}, the centre $Z( \overline H_c(G(2,1,1)) )$ is two-dimensional. Consider $z=xy-cs$.
We claim that $z$ is central in $\overline H_c(G(2,1,1)) )$. Indeed, $sz=zs$, and 
$$
zx = xyx-csx = x(xy-2cs) +cxs = 0-cxs = -cxs, \quad 
xz =-cxs = -cxs
$$
so $z$ commutes with $x$. Similarly, $z$ commutes with $y$.
We conclude that $Z( \overline H_c(G(2,1,1)) )=\Q 1 + \Q z$. Observe that
$$
z^2 = zxy - czs = (-cxs)y +cxsy + c^2 =c^2.
$$
Since $c=1$ in our example, $z^2=1$. In particular, 
the algebra $Z( \overline H_c(G(2,1,1)) )$
does not contain elements of multiplicative order~$4$, 
hence such elements do not exist in 
$Z(\overline H_c(G(2,2,2)))$.
%
%
On the other hand, we claim that
\begin{align*}
\gamma = & \frac{1}{4}\big[ 2c^2 -2c\sigma + 2c^2\sigma^2 + 2c\sigma^3 \\ 
 & + cx_1\sigma y_1 -c x_1 \sigma y_2 - 2 x_1 \sigma^2 y_2 + c x_2 \sigma y_1\\ 
& + c x_2 \sigma y_2 + 2 x_2 \sigma^2 y_1 + 2 x_1^2 y_1^2 + c x_1 \sigma^3 y_1\\
 & + c x_1 \sigma^3 y_2 - c x_2 \sigma^3 y_1 + c x_2 \sigma^3 y_2\big]
\end{align*}
is a central element of $\overline{\underline{H}}_{c}(\mu(G(2,2,2)))$. Moreover, $\gamma^2$ is not a scalar whereas $\gamma^4 = c^4$. This is a result of a computer calculation. Hence, if $c=\pm 1$, the centre of 
$\overline{\underline{H}}_{c}(\mu(G(2,2,2)))$
contains an element of multiplicative order $4$, which shows that the two algebras are not isomorphic over $\Q$.
\end{proof}
\end{proposition}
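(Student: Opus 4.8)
The plan is to distinguish the two $64$-dimensional $\Q$-algebras by an isomorphism invariant of their centres, namely whether the centre contains an element of multiplicative order $4$; since any algebra isomorphism restricts to an isomorphism of centres, producing such an element on one side but not the other finishes the proof.

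First I would compute the centre of $\overline H_c(G(2,2,2))$. Since $G(2,2,2)$ is the reducible Coxeter group of type $A_1\times A_1$, the algebra $H_{0,c}(G(2,2,2))$, and hence its restricted quotient, factors as a tensor product $\overline H_c(G(2,1,1))\otimes \overline H_c(G(2,1,1))$ of two commuting copies of the rank-one restricted rational Cherednik algebra, which I would present as the $8$-dimensional $\Q$-algebra on $x,s,y$ with $s^2=1$, $sx=-xs$, $ys=-sy$, $yx=xy-2cs$, $x^2=y^2=0$. A short direct computation shows that $z=xy-cs$ is central (one checks $sz=zs$, $zx=-cxs=xz$, and similarly that $z$ commutes with $y$) and that $z^2=c^2$; combined with the fact from \cite{hird2023} that $Z(\overline H_c(G(2,1,1)))$ is two-dimensional, this gives $Z(\overline H_c(G(2,1,1)))=\Q 1\oplus \Q z$. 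Hence $Z(\overline H_c(G(2,2,2)))\cong \bigl(\Q[t]/(t^2-c^2)\bigr)^{\otimes 2}$, which for $c=\pm1$ is $\Q\times\Q\times\Q\times\Q$; its only units of finite multiplicative order are the sign vectors, all of order dividing $2$.

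Next I would treat $\overline{\underline H}_{-c}(\mu(G(2,2,2)))$ by exhibiting the explicit element $\gamma$ written out in the statement. The task is to verify, by a finite linear-algebra computation inside this $64$-dimensional algebra over $\Q$ (using the relations $x_ix_j=-x_jx_i$, $y_iy_j=-y_jy_i$, the cross relations with $\mu(G(2,2,2))$, and the defining invariant ideals), that $\gamma$ commutes with $\sigma_{12}$, $x_1$, $x_2$, $y_1$, $y_2$, hence is central, that $\gamma^2$ is not a scalar multiple of $1$, and that $\gamma^4=c^4$. Because $(-c)^4=c^4$ and $|c|=1$, these conclusions are insensitive to the sign of the parameter, so $\gamma$ is an order-$4$ element of $Z(\overline{\underline H}_{-c}(\mu(G(2,2,2))))$. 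Comparing with the previous paragraph, the two centres are not isomorphic as $\Q$-algebras, so neither are $\overline H_c(G(2,2,2))$ and $\overline{\underline H}_{-c}(\mu(G(2,2,2)))$.

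The main obstacle is the computer verification that this particular $\gamma$ is central and that $\gamma^2\notin \Q 1$: there is no transparent conceptual reason singling out this combination, and establishing it requires expanding products of monomials in a noncommutative $64$-dimensional algebra with sign-twisted polynomial relations. A smaller point to get right is the computation of $Z(\overline H_c(G(2,1,1)))$: once its dimension is known to be $2$ in the $t=0$, non-modular setting used here, the explicit $z$ pins it down and the relation $z^2=c^2$ is elementary, but one should confirm that the dimension statement borrowed from \cite{hird2023} applies to exactly this algebra.
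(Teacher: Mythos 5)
Your proposal is correct and follows essentially the same route as the paper: decompose $\overline H_c(G(2,2,2))$ as a tensor square of the $8$-dimensional rank-one algebra, compute its two-dimensional centre via $z=xy-cs$ with $z^2=c^2$ to rule out central elements of multiplicative order $4$, and then exhibit the central element $\gamma$ of order $4$ on the mystic side by a computer verification. The paper likewise defers the centrality of $\gamma$ and the facts $\gamma^2\notin\Q 1$, $\gamma^4=c^4$ to an explicit computation over $\Q$, so your acknowledged "main obstacle" is handled in exactly the same way there.
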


\nt We remark that the above example does not work over $\Cc$, or indeed over an extension of $\Q$ containing $\sqrt{-1}$.
If scalars are extended to $\Q(\sqrt{-1})$, the two algebras 
become isomorphic. We currently have no explicit example 
where we could disprove isomorphism over $\Cc$.\bb

\nt We finish with the following conjecture.

\begin{conj}
Let $m$ be even and $\frac mp$ be odd, 
and let $k=\Q(\sqrt[m]{1})$. The $\Kk$-algebras $\underline{\overline{H}}_{\underline{c}}(\mu(G(m,p,n)))$ and $\overline H_c(G(m,p,n))$ are not isomorphic. 
\end{conj}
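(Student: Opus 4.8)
The plan is to upgrade Proposition~\ref{prop:not_isom} to a uniform statement by exhibiting a $\Kk$-algebra invariant which separates the two algebras. Write $G=G(m,p,n)$ and $W=\mu(G(m,p,n))$. The first task is to make precise the sense in which the two algebras become ``the same'' over a bigger field: the theorem identifying $\underline{\overline{H}}_{\underline c}(W)$ with the cocycle twist $(\overline H_c(G))_\F$, together with \cite[Theorem 5.2]{twistsrcas}, reduces the question to whether $(\overline H_c(G))_\F\cong \overline H_c(G)$, and one should show this holds after passing to the finite extension $\Kk'$ of $\Kk$ over which the $\Kk T$-action becomes adjoint --- in the base case $\Kk'=\Q(\sqrt{-1})$ --- by realising the conjugation action of each $t_i$ inside the unit group of the scalar-extended algebra (equivalently, by applying Proposition~\ref{twist_isom_prop} over $\Kk'$). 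Granting this, $\overline H_c(G)$ and $\underline{\overline{H}}_{\underline c}(W)$ are $\Kk$-forms of a single $\overline{\Kk}$-algebra, so any distinguishing invariant must be field-sensitive; the natural candidate --- exactly the one already used in Proposition~\ref{prop:not_isom} --- is the $\Kk$-algebra isomorphism type of the centre, detected through its group of torsion units, i.e.\ through which roots of unity occur in the centre.

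\textbf{Steps.} First, record that the triangular decomposition $\underline{\overline{H}}_{\underline c}(W)\cong\underline S_W\otimes\Kk G\otimes\underline S_W'$, the isomorphism $\phi$, the cocycle $\F$ of~\eqref{cocycle_f} and Chevalley's Theorem~\ref{thm:chevalley} are all $\Kk$-rational, so both centres are honest $\Kk$-algebras (the decomposition over $\Kk$ is already in the theorem preceding Proposition~\ref{prop:not_isom}). Second, bound the torsion of $Z(\overline H_c(G))$: using the block decomposition of restricted rational Cherednik algebras and the available descriptions of their centres (Gordon \cite{gordon_2003}, Thiel \cite{thiel}, and for $G(m,1,n)$ the rank-one computation of Hird \cite{hird2023} already invoked in Proposition~\ref{prop:not_isom}), reducing as far as possible to $C_m=G(m,1,1)$ via the parabolic/tensor structure, show that for $c$ generic --- $c=1$ being the model case --- every torsion unit of $Z(\overline H_c(G))$ has multiplicative order dividing $m$, hence realised among the roots of unity of $\Kk=\Q(\sqrt[m]{1})$. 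Third, produce on the braided side an explicit central element $\gamma$ of $\underline{\overline{H}}_{\underline c}(W)$ whose multiplicative order strictly exceeds those available in $\Kk$ --- of order $4$ when $4\nmid m$, of order $2m$ when $4\mid m$ --- generalising the element $\gamma$ of Proposition~\ref{prop:not_isom}; this mirrors the fact that $W$ contains elements of such orders (for instance products $t_i^{(\zeta)}\sigma$ of a diagonal element and a mystic reflection, whose square is a diagonal element of order $m$) while $G$ does not. With such a $\gamma$ in hand the two centres have non-isomorphic torsion-unit groups, hence the algebras are not isomorphic over $\Kk$.

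\textbf{Main obstacle.} The crux is the third step: producing a uniform closed-form central element $\gamma$ with the prescribed order for all $(m,p,n)$. In Proposition~\ref{prop:not_isom} even the single case $m=p=n=2$, a $64$-dimensional algebra, was handled by machine, and $\dim\underline{\overline{H}}_{\underline c}(W)$ grows like $|W|^3$; what is needed is a conceptual construction, most plausibly obtained by transporting through $\phi^{-1}$ the $\F$-twist of a natural central element of $\overline H_c(G)$ (a ``deformed Euler'' element, or a symmetrisation over a conjugacy class of reflections), after which the order computation should become tractable. A secondary difficulty is pinning down the precise extension $\Kk'$ trivialising the twist, and hence the exact torsion bound on $Z(\overline H_c(G))$, in the range $4\mid m$ where $\sqrt{-1}\in\Kk$ already; this is also the point where the hypothesis that $\frac mp$ be odd is essential, since for $\frac mp$ even the $\Kk T$-action is adjoint outright and the two algebras coincide on the nose by Proposition~\ref{twist_isom_prop}.
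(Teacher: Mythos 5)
This statement is labelled a \emph{conjecture} in the paper: the authors offer no proof, only the single computer-verified case $m=p=n=2$ over $\Q$ in Proposition~\ref{prop:not_isom}, and they explicitly state they have no example disproving isomorphism over $\Cc$. Your proposal is therefore not being measured against an existing argument, and, as you yourself acknowledge in the ``main obstacle'' paragraph, it is a strategy rather than a proof. The decisive step --- producing, for all $(m,p,n)$ with $\frac mp$ odd, a central element $\gamma$ of $\underline{\overline{H}}_{\underline c}(W)$ whose multiplicative order exceeds the orders of roots of unity available in $Z(\overline H_c(G))$ --- is exactly the content of the conjecture repackaged, and you do not carry it out. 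Until that element is constructed (or some other field-sensitive invariant is computed on both sides), the conjecture remains open.

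Two further points in your outline would need repair even as a plan. First, your step 2 proposes to bound the torsion units of $Z(\overline H_c(G(m,p,n)))$ by ``reducing as far as possible to $G(m,1,1)$ via the parabolic/tensor structure.'' The tensor decomposition used in Proposition~\ref{prop:not_isom} is special to $G(2,2,2)\cong A_1\times A_1$, which is a \emph{reducible} reflection group; for $n\ge 2$ and general $(m,p)$ the group $G(m,p,n)$ is irreducible and $\overline H_c(G(m,p,n))$ does not factor as a tensor product of rank-one restricted Cherednik algebras, so the centre cannot be controlled this way. The centres of restricted rational Cherednik algebras beyond small cases are not fully described in the literature you cite, so this bound is a substantive open sub-problem, not a routine verification. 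Second, your framing assumes the two algebras become isomorphic over a controlled finite extension $\Kk'$ (so that they are forms of one another); the paper only verifies this for the base case over $\Q(\sqrt{-1})$ and explicitly leaves the general situation, including behaviour over $\Cc$, unresolved. In particular, when $4\mid m$ one has $\sqrt{-1}\in\Kk$ already, and your own proposal flags that the torsion bound and the trivialising extension are unclear precisely there --- which is to say the argument is incomplete in a range of cases covered by the conjecture.
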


\bibliographystyle{plain}
\bibliography{mybib}

\end{document}